\documentclass[reqno]{amsart}

\makeatletter
\@namedef{subjclassname@2020}{%
  \textup{2020} Mathematics Subject Classification}
\def\subsection{\@startsection{subsection}{2}%
  \z@{.5\linespacing\@plus.7\linespacing}{.25\linespacing}%
  {\normalfont\bfseries}}
\def\subsubsection{\@startsection{subsubsection}{3}%
  \z@{.5\linespacing\@plus.7\linespacing}{.25\linespacing}%
  {\normalfont\itshape}}
\makeatother

\usepackage{amsmath,amssymb,mathtools,url,xcolor}
\usepackage[hidelinks]{hyperref}

\numberwithin{equation}{section}

\newtheorem{theorem}{Theorem}[section]
\newtheorem{lemma}[theorem]{Lemma}
\newtheorem{proposition}[theorem]{Proposition}

\theoremstyle{definition}
\newtheorem{definition}[theorem]{Definition}
\newtheorem{assumption}{Assumption}

\theoremstyle{remark}
\newtheorem{remark}[theorem]{Remark}

\newcommand{\dd}{\,\mathrm d}
\newcommand{\cU}{\mathcal U}
\newcommand{\cG}{\mathsf G}
\newcommand{\cI}{\mathcal I}
\newcommand{\cM}{\mathsf M}
\newcommand{\cP}{\mathsf P}
\newcommand{\cQ}{\mathsf Q}
\newcommand{\cZ}{\mathcal Z}
\newcommand{\Ent}{\mathrm H}
\newcommand{\E}{\mathbb E}
\newcommand{\TV}{\mathrm{TV}}
\newcommand{\esssup}{\operatorname*{ess\,sup}}
\newcommand{\Id}{\mathrm{Id}}
\newcommand{\dettwo}{\operatorname{det}_2}
\newcommand{\abs}[1]{\lvert #1\rvert}
\newcommand{\norm}[1]{\lVert #1\rVert}

\begin{document}

\title[Uniform Partition-Function Estimates]
{Uniform Partition-Function Estimates for Coulomb Modulated Energy
at All Positive Temperatures}

\author{Zhenfu Wang}
\address{Beijing International Center for Mathematical Research,
Peking University, 5 Yiheyuan Road, \mbox{Beijing 100871}, China}
\email{zwang@bicmr.pku.edu.cn}

\author{Xianliang Zhao}
\address{Beijing International Center for Mathematical Research,
Peking University, 5 Yiheyuan Road, \mbox{Beijing 100871}, China}
\email{xzhaomath@gmail.com}

\hypersetup{
  pdftitle={Uniform Partition-Function Estimates for Coulomb Modulated Energy at All Positive Temperatures},
  pdfauthor={Zhenfu Wang and Xianliang Zhao},
  pdfsubject={Uniform centered partition-function estimates for Coulomb--Riesz modulated energy at all positive temperatures},
  pdfkeywords={partition function, propagation of chaos, Coulomb interaction, relative entropy, Gaussian fluctuations}
}

\begin{abstract}
We prove uniform-in-\(N\) partition-function estimates at all positive
temperatures for the centered modulated energy of logarithmic, Riesz, and
Bessel--Riesz kernels on \(\mathbb R^d\) in the locally square-integrable range
\(0\le s<d/2\).  They yield finite-particle exponential integrability,
quadratic behavior at small parameter, explicit kernel-dependent growth at
large parameter, and uniform entropy control of the associated Gibbs
measures.  As applications, we obtain uniform R\'enyi-divergence and
relative-entropy bounds for interacting Gibbs equilibria and entropic
mean-field convergence near thermal equilibrium; the optimal \(N^{-1}\)
normalized relative-entropy rate for the three-dimensional Coulomb flow; and
an \(N^{-1/2}\) Gaussian approximation for fixed-time finite-dimensional
fluctuations.  Finally, the modulated energy converges to a random variable in
the second Wiener chaos, and for every positive parameter the partition
functions converge to its Laplace transform, represented by a
Carleman--Fredholm determinant.  This formula identifies the sharp
small-parameter behavior and, when the reference density is bounded below on
a ball, the sharp large-parameter growth.
\end{abstract}

\subjclass[2020]{Primary 82B21; Secondary 60F05, 82C22}

\keywords{Partition function, propagation of chaos, Coulomb interaction,
relative entropy, Gaussian fluctuations}

\maketitle

\begingroup
\setcounter{tocdepth}{1}
\tableofcontents
\endgroup

\section{Introduction}

Mean-field models replace a large interacting particle system by a deterministic
continuum density.  Beyond this law of large numbers scale, the centered
interaction energy governs fluctuations of the particles, next-order Gibbs
weights, and quantitative stability of the mean-field approximation.  This
article studies the exponential integrability of that energy for singular
Coulomb--Riesz interactions.  Our central goal is an estimate that is uniform
in the particle number and valid at every positive temperature, including the
nonperturbative low-temperature regime.

To formulate the problem, fix a probability density \(\rho\), let
\(X_1,\ldots,X_N\) be independent with law \(\rho(x)\dd x\), and write
\(X^{1:N}=(X_1,\ldots,X_N)\).  For an even interaction potential \(g\), define
its centered version relative to \(\rho\) by
\[
 g_\rho^\circ(x,y)
 :=g(x-y)-(g*\rho)(x)-(g*\rho)(y)
   +\iint_{\mathbb R^d\times\mathbb R^d}
      g(u-v)\rho(u)\rho(v)\dd u\dd v.
\]
The kernel \(g_\rho^\circ\) has zero \(\rho\)-mean in each variable:
\begin{equation}\label{eq:canonical-centering}
 \int_{\mathbb R^d}g_\rho^\circ(x,v)\rho(v)\dd v=0,
 \qquad
 \int_{\mathbb R^d}g_\rho^\circ(u,y)\rho(u)\dd u=0,
 \qquad  \forall \, x,y\in\mathbb R^d.
\end{equation}
Following Serfaty's terminology \cite{Serfaty2020}, we define
\begin{equation}\label{eq:modulated-energy}
 \cU_{N,\rho}(g;X^{1:N})
 :=\frac1{N-1}\sum_{1\le i<j\le N}g_\rho^\circ(X_i,X_j)
\end{equation}
and call it the modulated energy of \(X^{1:N}\) relative to \(\rho\).  The
cancellation \eqref{eq:canonical-centering} also identifies
\(\cU_{N,\rho}(g;X^{1:N})\) as a scaled canonical \(U\)-statistic with kernel
\(g_\rho^\circ\).  Expanding the centered kernel gives
\begin{equation}\label{eq:modulated-energy-expansion}
 \cU_{N,\rho}(g;X^{1:N})
 =\frac1{2(N-1)}\sum_{i=1}^N\sum_{j\ne i}g(X_i-X_j)
  -\sum_{i=1}^N(g*\rho)(X_i)
  +\frac N2\iint g(x-y)\rho(x)\rho(y)\dd x\dd y.
\end{equation}
We omit \(X^{1:N}\) when the configuration is clear.  The factor
\(1/(N-1)\) places the fluctuations at order one. 

The corresponding centered partition function is the normalizing constant of
the Gibbs tilt of \(\rho^{\otimes N}\):
\[
 \cZ_{N,\rho}(\beta;g)
 :=\E_{\rho^{\otimes N}}e^{-\beta\cU_{N,\rho}(g;X^{1:N})},
 \qquad \beta\ge0.
\]
By \eqref{eq:canonical-centering},
\(\E_{\rho^{\otimes N}}\cU_{N,\rho}(g;X^{1:N})=0\). 

For comparison with the standard convention, our normalization and the
traditional \(1/N\) pair normalization satisfy the exact finite-\(N\)
relations
\begin{equation}\label{eq:normalization-scaling}
	\begin{aligned}
		\frac1N\sum_{1\le i<j\le N}g_\rho^\circ(X_i,X_j)
		&=\frac{N-1}{N}\cU_{N,\rho}(g),\\
		\E_{\rho^{\otimes N}}\exp\!\Bigg(
		-\frac\beta N\sum_{1\le i<j\le N}g_\rho^\circ(X_i,X_j)\Bigg)
		&=\cZ_{N,\rho}\!\left(\frac{N-1}{N}\beta;g\right).
	\end{aligned}
\end{equation}
Thus the two conventions differ only by the displayed finite-\(N\) rescaling
of \(\beta\).

The basic problem is whether \(\cZ_{N,\rho}(\beta;g)\) remains bounded as
\(N\to\infty\) for every fixed \(\beta>0\).  This is substantially stronger
than weak convergence of the canonical \(U\)-statistic: convergence in law
does not control its exponential moments.  For singular kernels, existing
perturbative arguments also do not reach arbitrary \(\beta\).  A
finite-particle, all-temperature bound is therefore very crucial for four
connected questions: convergence of the centered partition functions,
identification of their determinant limit, quantitative control of the
associated Gibbs measures, and applications to mean-field dynamics and
fluctuations.

We prove such a bound for logarithmic, Riesz, and Bessel--Riesz kernels in the
full locally square-integrable range \(0\le s<d/2\).  This
finite-particle estimate yields uniform Gibbs entropy bounds, quantitative
entropic chaos, and an \(N^{-1/2}\) Gaussian fluctuation estimate. We also identify the
large-\(N\) limit as the Laplace transform of a second Wiener chaos random
variable and determine its sharp growth as \(\beta\to\infty\).  The latter
question is spectral: it depends on the number and distribution of the
eigenvalues of \(A_{g,\rho}\) defined in \eqref{eq:newOperator} above the scale \(\beta^{-1}\).

We now specify the singular interactions considered in the article.  We use the
Fourier convention
\(\widehat f(\xi)=\int_{\mathbb R^d}e^{-ix\cdot\xi}f(x)\dd x\).
For \(0\le s<d/2\), let
\[
 g_{d,s}(x)=
 \begin{cases}
  -\dfrac{2(4\pi)^{-d/2}}{\Gamma(d/2)}\log|x|,&s=0,\\
  c_{d,s}|x|^{-s},&0<s<d/2,
 \end{cases}
\]
where \(c_{d,s}>0\) is chosen so that
\(\widehat{g_{d,s}}(\xi)=|\xi|^{-(d-s)}\).  For \(m>0\) and
\(0<s<d/2\), define the Bessel--Riesz kernel by
\[
 \widehat{g_{d,s}^{(m)}}(\xi)
 =(m^2+|\xi|^2)^{-(d-s)/2},
\]
and set \(g_{d,s}^{(0)}=g_{d,s}\).  The Coulomb kernels in this range include
\[
 g_{2,0}(x)=-\frac1{2\pi}\log|x|,
 \qquad
 g_{3,1}(x)=\frac1{4\pi|x|},
 \qquad
 -\Delta g_{2,0}=\delta_0,
 \quad
 -\Delta g_{3,1}=\delta_0.
\]

The restriction \(s<d/2\) is exactly the range in which these kernels are
locally square integrable.  Under the following standing assumption, this
implies \(g_\rho^\circ\in L^2(\rho^{\otimes2})\).  Consequently, the centered
interaction operator is Hilbert--Schmidt and the Gaussian quadratic form that
appears in the limiting theory is well defined.

\begin{assumption}
\label{ass:kernel-density}
Let \(d\ge1\) and \(0\le s<d/2\).  Assume
\[
 \rho\ge0,
 \qquad
 \int_{\mathbb R^d}\rho(x)\dd x=1,
 \qquad
 \rho\in L^\infty(\mathbb R^d).
\]
When \(s=0\), take \(m=0\), corresponding to the repulsive logarithmic
potential.  We further assume that, for some \(a_\rho,q_\rho>0\),
\begin{equation}\label{eq:log-tail-assumption}
 \int_{\mathbb R^d}e^{a_\rho|x|^{q_\rho}}\rho(x)\dd x<\infty.
\end{equation}
If \(0<s<d/2\), let \(m\ge0\); no further assumption is needed.
\end{assumption}

For \(g=g_{d,s}^{(m)}\), define the centered interaction operator
\(A_{g,\rho}:L^2(\rho)\to L^2(\rho)\) by
\begin{equation}\label{eq:newOperator}
 A_{g,\rho}f(x)
 :=\int_{\mathbb R^d}g_\rho^\circ(x,y)f(y)\rho(y)\dd y.
\end{equation}

\subsection{All-temperature, uniform-in-\(N\) partition-function estimates}

The first main result resolves the finite-particle exponential-integrability
problem at every temperature.

\begin{theorem}
\label{thm:uniform-partition}
Under Assumption~\ref{ass:kernel-density}, for every \(\beta\ge0\),
\begin{equation}\label{eq:riesz-uniform}
 1\le\sup_{N\ge2}
 \cZ_{N,\rho}(\beta;g_{d,s}^{(m)})<\infty.
\end{equation}
More precisely, for \(0\le\beta\le1\),
\[
 0\le\sup_{N\ge2}
 \log\cZ_{N,\rho}(\beta;g_{d,s}^{(m)})
 \le C_{d,s,m,\rho}\beta^2,
\]
and for \(\beta\ge1\),
\begin{equation}\label{eq:riesz-parameter-bound}
 \sup_{N\ge2}\log\cZ_{N,\rho}(\beta;g_{d,s}^{(m)})
 \le C_{d,s,m,\rho}
 \begin{cases}
  \beta\log(2+\beta),&s=0,\\
  \beta^{d/(d-s)},&0<s<d/2.
 \end{cases}
\end{equation}
\end{theorem}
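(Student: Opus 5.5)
The plan is to split the kernel at a scale \(\eta\in(0,1]\) into a smooth positive-definite part and a short-range part. The smooth part will be handled by a deterministic lower bound, and the short-range part by a perturbative exponential-moment bound whose smallness parameter is the operator norm of the corresponding centered operator. Choosing \(\eta\) as a function of \(\beta\) should then produce the rates in \eqref{eq:riesz-parameter-bound}.

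\textbf{Step 1: kernel splitting.} Fix a radial Schwartz function \(\chi\) with \(\widehat\chi\) real, \(0\le\widehat\chi\le1\) and \(\widehat\chi(0)=1\), and set \(\chi_\eta=\eta^{-d}\chi(\cdot/\eta)\). Write \(g=g_{d,s}^{(m)}=g_\eta+k_\eta\), where
\[
\widehat{g_\eta}=\widehat g\,\widehat\chi(\eta\cdot)^2,
\qquad
\widehat{k_\eta}=\widehat g\,\bigl(1-\widehat\chi(\eta\cdot)^2\bigr).
\]
Both have nonnegative Fourier transforms, so both centered kernels \((g_\eta)_\rho^\circ\) and \((k_\eta)_\rho^\circ\) are positive semidefinite. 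Since centering is linear, \(\cU_{N,\rho}(g)=\cU_{N,\rho}(g_\eta)+\cU_{N,\rho}(k_\eta)\). By H\"older,
\[
\cZ_{N,\rho}(\beta;g)\le \cZ_{N,\rho}(p\beta;g_\eta)^{1/p}\,\cZ_{N,\rho}(p'\beta;k_\eta)^{1/p'}
\]
for a fixed pair of conjugate exponents \(p,p'\), say \(p=p'=2\).

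\textbf{Step 2: the smooth part.} For a positive semidefinite kernel,
\[
\sum_{i\ne j}(g_\eta)^\circ_\rho(X_i,X_j)=\Bigl\|\sum_i(\delta_{X_i}-\rho)\Bigr\|_{g_\eta}^2-\sum_i(g_\eta)^\circ_\rho(X_i,X_i)\ge-\sum_i(g_\eta)^\circ_\rho(X_i,X_i).
\]
We have \((g_\eta)^\circ_\rho(x,x)=g_\eta(0)-2(g_\eta*\rho)(x)+\mathrm{const}\). Here \(g_\eta(0)\lesssim\eta^{-s}\), or \(\lesssim\log(2/\eta)\) when \(s=0\), and \(g_\eta*\rho\) is bounded below using \(\rho\in L^\infty\). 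For \(s=0\), the logarithmic growth of \(g\) at infinity is controlled through the tail assumption \eqref{eq:log-tail-assumption}; this is the only place that assumption enters, via the exponential moment of \(\log(2+|X_i|)\). The result is
\[
-\beta\,\cU_{N,\rho}(g_\eta)\le C\beta\,\eta^{-s},
\]
respectively \(\le C\beta\log(2/\eta)\) plus a sum of i.i.d.\ terms whose exponential moments are uniformly bounded. This sum of i.i.d.\ terms is \(\frac{\beta}{N-1}\sum_i\log(2+|X_i|)\) up to constants, so its exponential moment is \(\le e^{C\beta}\).

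\textbf{Step 3: the short-range part (main obstacle).} I would aim for the following perturbative lemma. If \(k\) is an even kernel with \(k^\circ_\rho\in L^2(\rho^{\otimes2})\) and \(\theta\,\|A_{k,\rho}\|_{L^2(\rho)\to L^2(\rho)}\le c_0\) for some small absolute constant \(c_0\), then
\[
\sup_N\log\cZ_{N,\rho}(\theta;k)\le C\theta^2\|k^\circ_\rho\|_{L^2(\rho^{\otimes2})}^2+C\theta\,\|\rho\|_\infty\|k\|_{L^1}.
\]
For \(k=g\) itself, \(\theta\le c\) and \(\eta=1\), this lemma directly gives the \(C\beta^2\) bound on \([0,1]\). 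The lower bound \(\cZ\ge1\) is Jensen's inequality together with \(\E\,\cU=0\).

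To prove the lemma I would first decouple the canonical \(U\)-statistic (de la Pe\~na--Montgomery-Smith) into \(\frac{\theta}{N-1}\sum_{i\ne j}k^\circ_\rho(X_i,X_j')\). I would then condition on the primed copy, so that the exponent becomes a sum of i.i.d.\ centered variables \(\theta\,\phi(X_i)\) with \(\phi=\frac{1}{N-1}\sum_j k^\circ_\rho(\cdot,X'_j)\). I would bound \(\log\E e^{\theta\phi(X)}\) by \(C\theta^2\E\phi^2\) on the event where \(\theta\phi\) is controlled, and use a second-moment computation in the primed variables to close. An alternative that avoids decoupling is an induction in \(N\), adding one particle at a time.

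The difficulty is that \(k_\eta\) is unbounded near the origin. The singular diagonal-type contributions therefore have to be absorbed using only the operator norm and \(\|\rho\|_\infty\), and not a sup norm. This is where I expect the main work, and I would first try to exploit the positivity of \(k_\eta\) to discard the dangerous sign.

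\textbf{Step 4: choice of scale.} By Young's inequality, \(\|A_{k_\eta,\rho}\|\lesssim\|\rho\|_\infty\|k_\eta\|_{L^1}\lesssim\eta^{d-s}\), and \(\|(k_\eta)_\rho^\circ\|_{L^2}^2\lesssim\eta^{d-2s}\), which is finite precisely because \(s<d/2\). For \(\beta\ge1\), take \(\eta=(c\beta^{-1})^{1/(d-s)}\), so that \(2\beta\|A_{k_\eta,\rho}\|\le c_0\). The short-range contribution is then \(O(\beta^2\eta^{d-2s}+\beta\eta^{d-s})=O(\beta^{d/(d-s)})\), and the smooth contribution is \(O(\beta\eta^{-s})=O(\beta^{d/(d-s)})\). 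For \(s=0\) the smooth contribution is instead \(O(\beta\log\beta)\). Combining these yields \eqref{eq:riesz-parameter-bound} and hence \eqref{eq:riesz-uniform}.
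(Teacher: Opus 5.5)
\textbf{The gap is Step~3.} It carries the whole difficulty of the theorem, and you leave it open. Worse, the lemma as you state it is false, so it cannot be the missing piece.

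A smallness condition on \(\theta\norm{A_{k,\rho}}_{\mathrm{op}}\) does not see a tall, narrow negative spike. Take \(k=-\delta\varepsilon^{-d}\mathbf 1_{\{\abs{x}<\varepsilon\}}\). Then \(\norm{A_{k,\rho}}_{\mathrm{op}}\le\norm{\rho}_\infty\norm{k}_{L^1}\lesssim\delta\), and your right-hand side is \(\lesssim\theta^2\delta^2\varepsilon^{-d}+\theta\delta\). Yet already for \(N=2\), \(\log\cZ_{2,\rho}(\theta;k)=\log\E e^{-\theta k_\rho^\circ(X_1,X_2)}\ge\theta\delta\varepsilon^{-d}-d\log(1/\varepsilon)-C\), which is larger as \(\varepsilon\downarrow0\) once \(\theta\delta\) is small. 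A correct version needs pointwise control of \((k_\rho^\circ)_-\).

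Your decoupling sketch meets the same obstruction. Bounding \(\log\E e^{-c\theta\phi}\) by \(C\theta^2\E\phi^2\) needs \(\theta\phi_-\) bounded. Averaging over the primed copy then leaves \(\E\exp\bigl(+C\theta^2\,\cU_{N,\rho}(k^{(2)})\bigr)\), where \(k^{(2)}\) is the kernel of \(A_{k,\rho}^2\). This is a positive semidefinite statistic in the adverse direction, with natural size \(\theta^2\norm{\rho}_\infty\norm{k_\eta}_2^2\simeq\beta^{d/(d-s)}\), so the problem reappears instead of closing.

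The positivity you hope to exploit is also not available:
\begin{itemize}
\item Fourier positivity only yields \(\cU_{N,\rho}(k_\eta)\ge-\frac1{2(N-1)}\sum_i(k_\eta)_\rho^\circ(X_i,X_i)=-\infty\).
\item Pointwise positivity together with \(0<\norm{k_\eta}_{L^1}<\infty\) would force \(\lim_{\xi\to0}\widehat g(\xi)\bigl(1-\widehat\chi(\eta\xi)^2\bigr)\in(0,\infty)\). For a Schwartz \(\chi\) with \(\widehat\chi(0)=1\), this requires \(m=0\) and \(d-s\in2\mathbb N\).
\item With the natural choice \(\widehat\chi\equiv1\) near \(0\), one gets \(\int k_\eta=0\), so \(k_\eta\) changes sign. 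By scaling (when \(m=0\)), \(\beta\norm{(k_\eta)_-}_\infty\simeq\beta\eta^{-s}\simeq\beta^{d/(d-s)}\), which is large.
\end{itemize}

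\textbf{What the paper supplies.} Your architecture matches the paper's: a positive-definite coarse part paying a diagonal cost, a short-range remainder, H\"older, and an optimized cutoff. What is missing are two ingredients.

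First, the paper splits in heat time, \(r_\tau=\Gamma(\frac{d-s}2)^{-1}\int_0^\tau t^{(d-s)/2-1}e^{-m^2t}p_t\dd t\). This singular part is pointwise nonnegative, with \(\norm{r_\tau}_1\lesssim\tau^{(d-s)/2}\) and \(\norm{r_\tau}_2^2\lesssim\tau^{d/2-s}\). Hence \(\bigl((r_\tau)_\rho^\circ\bigr)_-\le2\norm{\rho}_\infty\norm{r_\tau}_1\): the whole singularity lies in the positive part, which is harmless for \(\E e^{-a\cU}\). The coarse part \(g_\tau=g-r_\tau\) is handled as in your Step~2.

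Second, Theorem~\ref{thm:canonical} (a Mayer--Penrose expansion) gives \(\log\E e^{-a\cU_{N,\mu}(K)}\le Ca^2\norm{K}_{L^2(\mu^{\otimes2})}^2\) under the single condition \(a\norm{K_-}_{L^\infty}\le c\), with no bound on \(K_+\). The steps are:
\begin{itemize}
\item \(f_N=e^{-aK/(N-1)}-1\) is bounded above.
\item Cancellation gives \(\int\abs{K}\dd\mu=2\int K_-\dd\mu\), so the mixed norms of \(f_N\) are \(O(a\norm{K_-}_\infty/N)\).
\item Jensen plus cancellation make the marginals of \(f_N\) nonnegative, so each tree term is at most \(\iint f_N\dd\mu^{\otimes2}\le Ca^2\norm{K}_2^2/N^2\) times mixed norms.
\item The Penrose remainder reduces to single-cycle integrals controlled by Hilbert--Schmidt norms.
\end{itemize}
Taking \(\tau^{(d-s)/2}\) a small multiple of \((1+\beta)^{-1}\), your Step~4 bookkeeping then goes through.

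\textbf{Small \(\beta\).} Even a true version of your lemma would not give \(C\beta^2\) on \([0,1]\). Its right-hand side has a term linear in \(\theta\), which is infinite for \(k=g\) when \(m=0\) since then \(g\notin L^1\). The paper instead combines four facts:
\begin{itemize}
\item \(\E\,\cU_{N,\rho}(g)=0\);
\item \(\E\,\cU_{N,\rho}(g)^2\le\norm{g_\rho^\circ}_{L^2(\rho^{\otimes2})}^2\);
\item the fixed moment \(\sup_N\cZ_{N,\rho}(2;g)<\infty\), taken from the large-\(\beta\) bound;
\item the inequality \(0\le e^{-\beta y}-1+\beta y\le C\beta^2(y^2+e^{-2y})\).
\end{itemize}
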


The estimate contains two distinct regimes.  Near \(\beta=0\), the canonical
cancellation removes the linear term and yields the quadratic bound
\(O(\beta^2)\).  At large \(\beta\), the logarithmic and Riesz singularities
produce the kernel-dependent orders in
\eqref{eq:riesz-parameter-bound}.  Besides being the principal result of the
article, Theorem~\ref{thm:uniform-partition} supplies the uniform integrability
needed for the determinant limit and uniform \(L^p\) control of the centered
Gibbs density.

The theorem extends the closest finite-particle estimates in three directions:
the singularity of the interaction, the temperature range, and the underlying
space.  Jabin and Wang proved a uniform partition-function estimate for a
bounded two-variable potential under marginal cancellation and a quantitative
smallness condition \cite[Theorem~4]{JabinWang2018}.  On the torus,
Duerinckx and Jabin established a uniform bound and the associated determinant
limit for centered square-integrable interactions in the small-\(\beta\)
regime \cite[Theorem~2.1 and Proposition~2.2]{DuerinckxJabin2026}.
Delgadino and Gvalani obtained an all-\(\beta\) estimate for repulsive
logarithmic-type interactions, with bounded reference density and the \(1/N\)
pair normalization \cite[Theorem~2.5]{DelgadinoGvalani2025}.  In contrast,
Theorem~\ref{thm:uniform-partition} treats probability densities on
\(\mathbb R^d\), every \(\beta\ge0\), and logarithmic, Riesz, and
Bessel--Riesz kernels throughout \(0\le s<d/2\).  It also indicates the explicit
large-\(\beta\) dependence; equation \eqref{eq:normalization-scaling} accounts
for the normalization difference.

\begin{remark}
The range \(s<d/2\) is sharp for an \(N\)-uniform estimate at this level of
generality.  Indeed, for \(d/2\le s<d\), the centered periodic Riesz kernel on
\(\mathbb T^d\) with Fourier coefficients
\(|k|^{s-d}\mathbf1_{k\ne0}\) does not belong to
\(L^2(\mathbb T^d)\), and Duerinckx and Jabin
\cite[Proposition~2.2(ii)]{DuerinckxJabin2026} prove divergence of its
\(1/N\)-normalized partition function for every positive parameter.  Since
centered Laplace transforms are nondecreasing on \([0,\infty)\), for
\(0<\beta'<\beta\),
\[
 \E_{\rho^{\otimes N}}\exp\!\Bigg(
 -\frac{\beta'}N\sum_{1\le i<j\le N}g_\rho^\circ(X_i,X_j)\Bigg)
 =\cZ_{N,\rho}\!\left(\frac{N-1}{N}\beta';g\right)
 \le \cZ_{N,\rho}(\beta;g)
\]
for all sufficiently large \(N\).  The same divergence therefore holds under
the \(1/(N-1)\) normalization, and \eqref{eq:riesz-uniform} cannot hold in
general when \(s\ge d/2\).  This argument does not rule out \(N\)-dependent
bounds; see \cite{DuerinckxJabin2026}. One can easily obtain an $N$-dependent bounds for Coulomb potentials in 
$\mathbb{R}^d$ with $d \geq 4$, by following our proof in Section \ref{sec:partition-estimates}. 
\end{remark}

The proof of Theorem~\ref{thm:uniform-partition} has an abstract combinatorial
part and a kernel-specific analytic part.  First, a Mayer--Penrose expansion
reduces the exponential moment of a marginally centered kernel to connected
graphs.  The cancellation improves the contribution of a tree by one power
of \(N\), and graphs with additional edges reduce to a single-cycle estimate.
We formulate this argument on an arbitrary probability space so that it can
later be applied to non-translation-invariant commutator kernels.

Second, a heat-kernel decomposition separates the singular interaction as
\(g=g_\tau+r_\tau\).  The coarse part \((g_\tau)_\rho^\circ\) is positive
semidefinite and has an explicitly controlled diagonal.  The remainder is
estimated in \(L^1\) and \(L^2\), with square integrability holding exactly
for \(s<d/2\).  Optimizing the cutoff \(\tau\) as a function of \(\beta\)
gives the large-\(\beta\) bounds in
\eqref{eq:riesz-parameter-bound}.  Near \(\beta=0\), canonical cancellation,
a uniform second moment, and one fixed exponential moment yield the quadratic
estimate.

\subsection{Three applications}

We now present three major applications of our uniform partition-function estimates, i.e. Theorem \ref{thm:uniform-partition}. 

\subsubsection{Application I: Entropic mean-field convergence near thermal
equilibrium}

We first present a simple consequence of a uniform partition-function
estimate, namely \eqref{eq:riesz-uniform}.  Let
\(g=g_{d,s}^{(m)}\), and let \(\rho\) satisfy
Assumption~\ref{ass:kernel-density}.  For \(\beta>0\), define the Gibbs tilt
\(\cP_{N,\rho}^{\beta}\) of \(\rho^{\otimes N}\) by
\begin{equation}\label{eq:centered-gibbs-measure}
 \dd\cP_{N,\rho}^{\beta}(X^{1:N})
 :=\frac{1}{\cZ_{N,\rho}(\beta;g)}
 e^{-\beta\cU_{N,\rho}(g;X^{1:N})}\dd\rho^{\otimes N}(X^{1:N}).
\end{equation}
Since
\(\E_{\rho^{\otimes N}}\cU_{N,\rho}(g;X^{1:N})=0\), adding the two relative
entropies gives
\[
 \Ent(\cP_{N,\rho}^{\beta}\mid\rho^{\otimes N})
 +\Ent(\rho^{\otimes N}\mid\cP_{N,\rho}^{\beta})
 =-\beta\E_{\cP_{N,\rho}^{\beta}}\cU_{N,\rho}(g;X^{1:N}).
\]
Applying the Gibbs variational inequality \cite[Lemma~1]{JabinWang2018}, we obtain
\[
 -\beta\E_{\cP_{N,\rho}^{\beta}}\cU_{N,\rho}(g;X^{1:N})
 \le\frac12\Ent(\cP_{N,\rho}^{\beta}\mid\rho^{\otimes N})
   +\frac12\log\cZ_{N,\rho}(2\beta;g).
\]
Since both relative entropies are nonnegative,
\[
 \Ent(\cP_{N,\rho}^{\beta}\mid\rho^{\otimes N})
 \le\log\cZ_{N,\rho}(2\beta;g)
\]
and the right-hand side is bounded uniformly in \(N\).  Equivalently, the
relative entropy per particle is
\(O(N^{-1})\).  An earlier argument of Cai--Feng--Gong--Wang
\cite{CFGW24}, based on Serfaty's pointwise lower bound for the modulated
energy \cite{Serfaty2020}, gives an \(N\)- and dimension-dependent estimate
for Coulomb and Riesz potentials.

More generally, Proposition~\ref{prop:gibbs-entropy} shows that, for every
fixed \(\beta>0\) and \(p>1\),
\begin{equation}\label{eq:centered-gibbs-renyi-entropy-bounds}
 D_p(\cP_{N,\rho}^{\beta}\mid\rho^{\otimes N})
 +\Ent(\cP_{N,\rho}^{\beta}\mid\rho^{\otimes N})
 +\Ent(\rho^{\otimes N}\mid\cP_{N,\rho}^{\beta})
 \le C_{g,\rho,\beta,p},
\end{equation}
uniformly in \(N\), where $D_p$ denotes the R\'enyi divergence of order \(p\)  as defined in Section \ref{subsec:notation}. Consequently, the R\'enyi divergence and both relative
entropies are bounded at the level of the full \(N\)-particle system, and the
normalized relative entropies are \(O(N^{-1})\).  This conclusion is genuinely
finite-particle: it follows directly from Theorem~\ref{thm:uniform-partition}
and does not rely on the limiting determinant.

This centered Gibbs structure also describes thermal equilibrium for a
second-order Coulomb--Riesz particle system.  Let \(V\in C^1(\mathbb R^d)\), let
\(\sigma\ge0\), and let \(B_1,\ldots,B_N\) be independent standard Brownian
motions in \(\mathbb R^d\).  We consider
\begin{equation}\label{eq:second-order-particle-system}
 \begin{cases}
  \dd X_i=V_i\dd t,\\
  \displaystyle
  \dd V_i=-\bigg(\nabla V(X_i)
   +\frac1{N-1}\sum_{j\ne i}\nabla g(X_i-X_j)\bigg)\dd t
   -\sigma V_i\dd t+\sqrt{\dfrac{2\sigma}{\beta}}\dd B_i,
 \end{cases}
 \qquad 1\le i\le N.
\end{equation}
Writing \(Z_i=(x_i,v_i)\) and
\(Z^{1:N}=(Z_1,\ldots,Z_N)\), the Hamiltonian reads 
\begin{equation*}
 H_{N,V}(Z^{1:N})
 =\frac12\sum_{i=1}^N\abs{v_i}^2+\sum_{i=1}^NV(x_i)
  +\frac1{2(N-1)}\sum_{i=1}^N\sum_{j\ne i}g(x_i-x_j).
\end{equation*}
Its \(N\)-particle Gibbs equilibrium is
\begin{equation*}
 \cG_{N,\beta}(Z^{1:N})
 :=\frac{e^{-\beta H_{N,V}(Z^{1:N})}}
 {\displaystyle\int e^{-\beta H_{N,V}}\dd Z^{1:N}}.
\end{equation*}

The formal mean-field limit of \eqref{eq:second-order-particle-system} is the
Vlasov--Fokker--Planck equation
\begin{equation}\label{eq:mean-field-vfp}
 \partial_t f+v\cdot\nabla_xf
 -\nabla_x\bigl(V+g*\rho_f\bigr)\cdot\nabla_vf
 =\sigma\nabla_v\!\cdot\left(vf+\frac1\beta\nabla_vf\right),
 \qquad
 \rho_f(t,x):=\int_{\mathbb R^d}f(t,x,v)\dd v.
\end{equation}
Suppose that a probability density \(\rho_\beta\), satisfying
Assumption~\ref{ass:kernel-density}, solves
\begin{equation*}
 \rho_\beta(x)=z_{\beta,V}^{-1}
 \exp\left(-\beta V(x)-\beta(g*\rho_\beta)(x)\right).
\end{equation*}
Here \(z_{\beta,V}>0\) is the normalizing constant.  Define
\begin{equation*}
 m_\beta(x,v):=\rho_\beta(x)
 \left(\frac{\beta}{2\pi}\right)^{d/2}e^{-\beta\abs v^2/2},
 \qquad
 \cM_{N,\beta}:=m_\beta^{\otimes N},
\end{equation*}
and \(m_\beta\) is stationary for \eqref{eq:mean-field-vfp}; when \(d=3\)
and \(g=g_{3,1}\), the equation~\eqref{eq:mean-field-vfp} is the
Vlasov--Poisson--Fokker--Planck equation.  The exact factorization proved in
Section~\ref{sec:gibbs-integrability} identifies the density of
\(\cG_{N,\beta}\) relative to \(\cM_{N,\beta}\) with the centered Gibbs
tilt.  Thus the bounds in \eqref{eq:centered-gibbs-renyi-entropy-bounds} on the
R\'enyi divergence and relative entropy
transfer from \((\cP_{N,\rho_\beta}^{\beta},\rho_\beta^{\otimes N})\) to
\((\cG_{N,\beta},\cM_{N,\beta})\).  Together with the
entropy-transfer inequality, this yields the following uniform estimate.  Let
\(F^N(t)\) denote the joint law at time \(t\) of the particle system
\eqref{eq:second-order-particle-system}.  Then
Proposition~\ref{prop:equilibrium-entropy-transfer} gives, for all \(N\ge2\)
and \(t\ge0\),
\[
 \Ent(F^N(t)\mid\cM_{N,\beta})
 \le 2\Ent(F^N(0)\mid\cG_{N,\beta})+C_{g,\rho_\beta,\beta},
\]
where the constant is independent of \(N\) and \(t\).

\subsubsection{Application II: Commutator estimates and quantitative
propagation of chaos}

The second application combines a uniform partition-function estimate with a
dynamic modulated free energy argument.
We consider the first-order mean-field diffusion
\begin{equation}\label{eq:intro-first-order-particle-system}
 \dd X_t^{N,i}
 =-\frac1{N-1}\sum_{j\ne i}\nabla g(X_t^{N,i}-X_t^{N,j})\dd t
   +\sqrt{\frac2\beta}\dd B_t^i,
 \qquad 1\le i\le N,
\end{equation}
whose mean-field density solves
\[
 \partial_t\rho_t
 -\nabla\!\cdot\!\bigl(\rho_t\nabla(g*\rho_t)\bigr)
 =\frac1\beta\Delta\rho_t.
\]
For the three-dimensional Coulomb flow, comparison of the commutator kernel
with the interaction kernel yields the exponential-moment estimate under the
centered Gibbs law in
Proposition~\ref{prop:gibbs-commutator-exponential}.
More explicitly, for a globally Lipschitz vector field \(u\), set
\[
 q_u(x,y):=(u(x)-u(y))\cdot\nabla g(x-y).
\]
For some \(\theta>0\), we find that
\[
 \sup_{N\ge2}\E_{\cP_{N,\rho}^{\beta}}
 \exp\!\left(\theta\abs{\cU_{N,\rho}(q_u)}\right)<\infty.
\]
The commutator estimate and the resulting propagation of chaos argument apply
to every kernel covered by Theorem~\ref{thm:uniform-partition}; the
three-dimensional Coulomb flow is singled out here only as the principal
example.
Entropy duality inserts
this estimate into the modulated free energy identity of
Proposition~\ref{prop:coulomb-modulated-identity}, and Gronwall's inequality
closes the argument.
The same partition-function bound supplies the coercive lower bound used to
recover relative entropy.
For product initial data,
Theorem~\ref{thm:optimal-modulated-free-energy} yields
\[
 \sup_{0\le t\le T}
 \Ent(F^N(t)\mid\rho_t^{\otimes N})\le C_T.
\]
Thus the total relative entropy remains bounded uniformly in \(N\), and the
normalized entropy has the optimal order \(N^{-1}\).
The modulated free energy converges uniformly to zero at the same
\(N^{-1}\) rate.
The key observation is that the commutator is controlled under the centered
Gibbs law rather than directly under the product law.

\subsubsection{Application III: Quantitative Gaussian fluctuations}

The third application uses a uniform partition-function estimate directly.
Section~\ref{sec:gaussian-fluctuations} treats the first-order
system~\eqref{eq:intro-first-order-particle-system} for every kernel covered by
Theorem~\ref{thm:uniform-partition}.
For the fluctuation measure
\(\eta_t^N:=\sqrt N\,\bigl(N^{-1}\sum_{i=1}^N
\delta_{X_t^{N,i}}-\rho_t\bigr)\), the equation
tested against a smooth function \(\varphi\) consists of a linearized drift, a
martingale, and the nonlinear remainder.
More precisely, applying It\^o's formula and symmetrization gives
\[
 \dd\langle\eta_t^N,\varphi\rangle
 =\langle\eta_t^N,\mathcal L_t\varphi\rangle\dd t
  -\frac1{\sqrt N}\cU_{N,\rho_t}(q_{\nabla\varphi})\dd t
  +\dd M_{N,t}^{\varphi},
\]
where \(M_{N,t}^{\varphi}\) denotes the martingale part and the linearized
operator is
\[
 (\mathcal L_t\varphi)(x)
 =\frac1\beta\Delta\varphi(x)
  -\nabla(g*\rho_t)(x)\cdot\nabla\varphi(x)
  +\int\nabla g(x-y)\cdot\nabla\varphi(y)\rho_t(y)\dd y.
\]
For the product initial data and regularity assumptions in
Section~\ref{sec:gaussian-fluctuations}, the modulated free energy inequality,
Theorem~\ref{thm:uniform-partition}, and
Proposition~\ref{prop:gibbs-commutator-exponential} imply
\[
 \sup_{N\ge2}\sup_{0\le t\le T}
 \Ent(F^N(t)\mid\cP_{N,\rho_t}^{\beta})<\infty,
 \qquad
 \sup_{N\ge2}\sup_{0\le t\le T}
 \E_{F^N(t)}\abs{\cU_{N,\rho_t}(q_{\nabla\varphi})}<\infty.
\]
The first estimate transfers concentration from the Gibbs reference law to
the particle law, while the second makes the nonlinear remainder
\(O(N^{-1/2})\) in \(L^1\).  Together with Hoeffding's inequality, the same
Gibbs entropy bound controls bounded empirical linear statistics and hence
the random part of the quadratic covariation, again with error
\(O(N^{-1/2})\) in \(L^1\).

At a fixed time, a backward equation for \(\mathcal L_t\) removes the
linearized drift.  We then compare the resulting finite-dimensional
semimartingale with its Gaussian limit.  The two preceding error bounds and a
multivariate Berry--Esseen estimate for the i.i.d.\ initial field give the
\(N^{-1/2}\) rate in
Theorem~\ref{thm:quantitative-gaussian-fluctuations}.
More precisely, for
smooth test functions
\(\boldsymbol\varphi=(\varphi_1,\ldots,\varphi_J)\), the theorem yields
\[
 \left|\E\Phi\!\left(\langle\eta_t^N,\boldsymbol\varphi\rangle\right)
       -\E\Phi(Y_t)\right|
 \le\frac{C_{t,\boldsymbol\varphi}}{\sqrt N}
 \left(\norm{\nabla\Phi}_\infty+\norm{\nabla^2\Phi}_\infty\right),
 \qquad \Phi\in C_b^2(\mathbb R^J),
\]
where \(Y_t\), depending on \(t\) and \(\boldsymbol\varphi\), is the centered
Gaussian vector determined by the Gaussian limit of the initial fluctuation
and the limiting martingale covariance.
In particular, every fixed-time finite-dimensional fluctuation vector
converges in law without a prior tightness or process-level convergence
argument for the full field.

\subsection{Large-\(N\) limit and sharp parameter dependence}

We next study the finer large-\(N\) asymptotics of the centered partition
function, identify its limit in terms of a Carleman--Fredholm determinant, and
use this limit to establish the sharpness of the parameter dependence in the
uniform bounds above.

\subsubsection{Second Wiener chaos and determinant limit}

The canonical \(U\)-statistic structure also determines the large-\(N\) limit
of the modulated energy.  The statistic converges weakly to a centered
quadratic form in a Gaussian field, whereas the partition function is its
Laplace transform.  The following theorem identifies both the fluctuation law
and its Laplace transform.

\begin{theorem}
\label{thm:gaussian-determinant}
Under Assumption~\ref{ass:kernel-density}, let \(g=g_{d,s}^{(m)}\).
Let \((\kappa_j)_{j\ge1}\) be the nonnegative eigenvalues of \(A_{g,\rho}\)
in nonincreasing order, repeated according to multiplicity, and let
\((Z_j)_{j\ge1}\) be independent standard Gaussian variables.  Then
\begin{equation}\label{eq:gaussian-chaos-limit}
 \cU_{N,\rho}(g)
 \xrightarrow[N\uparrow\infty]{\mathrm{law}}
 \cQ_{g,\rho}
 :=\frac12\sum_{j\ge1}\kappa_j(Z_j^2-1),
\end{equation}
where the series on the right-hand side converges in \(L^2\) and belongs to
the second Wiener chaos.
Moreover, for every \(\beta>0\),
\begin{align}
 \cZ_{\infty,\rho}(\beta;g)
 :=\lim_{N\uparrow\infty}\cZ_{N,\rho}(\beta;g)
 &=\E e^{-\beta\cQ_{g,\rho}}\notag\\
 &=\dettwo(\Id+\beta A_{g,\rho})^{-1/2},             \label{eq:determinant-limit}
\end{align}
and
\begin{equation}\label{eq:determinant-log}
 \lim_{N\uparrow\infty}\log\cZ_{N,\rho}(\beta;g)
 =\log\cZ_{\infty,\rho}(\beta;g)
 =\frac12\sum_{j\ge1}
 \bigl(\beta\kappa_j-\log(1+\beta\kappa_j)\bigr).
\end{equation}
\end{theorem}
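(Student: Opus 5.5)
Our plan has three steps: prove convergence in law through the classical theory of degenerate (canonical) \(U\)-statistics, pass to the Laplace transform using the uniform bound of Theorem~\ref{thm:uniform-partition}, and identify the limit through the spectral decomposition of \(A_{g,\rho}\).

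\emph{Convergence in law.} By Assumption~\ref{ass:kernel-density} and \(s<d/2\), we have \(g_\rho^\circ\in L^2(\rho^{\otimes2})\). Hence \(A_{g,\rho}\) is a self-adjoint Hilbert--Schmidt operator, and \(g_\rho^\circ(x,y)=\sum_j\lambda_j e_j(x)e_j(y)\) in \(L^2(\rho^{\otimes 2})\) with \(\sum_j\lambda_j^2<\infty\). The centering \eqref{eq:canonical-centering} gives \(\int e_j\rho=0\) whenever \(\lambda_j\ne0\).

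We first show that \(A_{g,\rho}\) is positive semidefinite, so that all \(\lambda_j=\kappa_j\ge0\); this is why only nonnegative eigenvalues appear. For \(f\in L^2(\rho)\), set \(\tilde f=f-\int f\rho\). Then
\[
\langle A_{g,\rho}f,f\rangle=\iint g(x-y)\,\mu(\dd x)\,\mu(\dd y),\qquad \mu:=\tilde f\rho,
\]
and \(\mu\) has zero total mass. For Riesz and Bessel--Riesz kernels \(\widehat g>0\), so the form equals \((2\pi)^{-d}\int\widehat g\,\abs{\widehat\mu}^2\ge0\). For the logarithmic kernel the same formula holds with \(\widehat\mu(0)=0\), which makes the Fourier integral finite and the form nonnegative (the standard fact that \(-\log\) is conditionally positive definite). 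One justifies this by approximating \(\mu\) by smooth compactly supported mass-zero measures; the exponential tail \eqref{eq:log-tail-assumption} controls the logarithmic growth at infinity.

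We then truncate. Let \(K\) be the kernel consisting of the first \(K\) eigenmodes. Its statistic is
\[
\frac{1}{2(N-1)}\sum_{j\le K}\kappa_j\Bigl[\Bigl(\sum_i e_j(X_i)\Bigr)^2-\sum_i e_j(X_i)^2\Bigr].
\]
By the multivariate CLT and the LLN, this converges to \(\tfrac12\sum_{j\le K}\kappa_j(Z_j^2-1)\). The remainder satisfies \(\E\,\cU_{N,\rho}(g-K)^2\le\tfrac{N}{2(N-1)}\sum_{j>K}\kappa_j^2\) by orthogonality of canonical \(U\)-statistics. A standard approximation argument then yields \eqref{eq:gaussian-chaos-limit}, with \(L^2\) convergence of the series.

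\emph{Laplace transforms.} Fix \(\beta>0\). By Theorem~\ref{thm:uniform-partition} applied at \(2\beta\), \(\sup_N\E e^{-2\beta\cU_{N,\rho}}<\infty\), so the family \(e^{-\beta\cU_{N,\rho}}\) is uniformly integrable. Weak convergence therefore gives \(\cZ_{N,\rho}(\beta;g)\to\E e^{-\beta\cQ_{g,\rho}}\), and this limit is finite. We expect this to be the main obstacle: without the all-\(\beta\) uniform bound, weak convergence alone says nothing about exponential moments. Since that bound is now available, the step is immediate.

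\emph{Computing the limit.} By independence, \(\E e^{-\beta\cQ_{g,\rho}}=\prod_j\E e^{-\frac\beta2\kappa_j(Z_j^2-1)}\). Each factor equals
\[
e^{\beta\kappa_j/2}(1+\beta\kappa_j)^{-1/2},
\]
which is finite because \(\kappa_j\ge0\). The product converges because \(x-\log(1+x)\le x^2/2\) and \(\sum\kappa_j^2<\infty\). This gives \eqref{eq:determinant-log}. The determinant form \eqref{eq:determinant-limit} follows from the definition
\[
\dettwo(\Id+\beta A)=\prod_j(1+\beta\kappa_j)e^{-\beta\kappa_j}
\]
for Hilbert--Schmidt \(A\). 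Finally, the infinite product is the \(L^1\) limit of the finite products, by monotone convergence applied to the partial products, which are exponential martingales in \(K\).
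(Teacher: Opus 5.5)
Your proposal follows the paper's architecture. Both use spectral truncation of \(g_\rho^\circ\), the CLT and LLN for finitely many centered orthonormal modes, and the exact tail identity \(\E\abs{\cU_{N,\rho}(g_\rho^\circ-K_M)}^2=\frac{N}{2(N-1)}\sum_{j>M}\kappa_j^2\). Both also get uniform integrability of \(e^{-\beta\cU_{N,\rho}(g)}\) from Theorem~\ref{thm:uniform-partition} at \(2\beta\).

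The one difference upstream is how you prove \(A_{g,\rho}\ge0\). You argue directly in Fourier variables, using conditional positive definiteness of the logarithm. The paper instead splits \(g=g_\tau+r_\tau\) and gets \(A_\tau\ge0\) from Lemma~\ref{lem:log-coarse-positive} or \eqref{eq:riesz-coarse-positive}. It then passes to the limit via \(\norm{A_{g,\rho}-A_\tau}_{\mathrm{HS}}\le4\norm{\rho}_\infty^{1/2}\norm{r_\tau}_2\to0\). Your route works. In the logarithmic case, however, justifying the Parseval identity for the mass-zero measure \(\tilde f\rho\) with only a logarithmic moment is exactly the work that lemma already does for Theorem~\ref{thm:uniform-partition}. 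So the paper's route costs nothing extra.

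The gap is in your last step, identifying \(\E e^{-\beta\cQ_{g,\rho}}\) with \(\prod_j e^{\beta\kappa_j/2}(1+\beta\kappa_j)^{-1/2}\). Independence gives the product formula only for the finite sums \(\cQ_M\). To let \(M\to\infty\) you need \(\E e^{-\beta\cQ_M}\to\E e^{-\beta\cQ_{g,\rho}}\), and your justification does not deliver it.

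\begin{itemize}
\item Monotone convergence does not apply. The random partial products \(e^{-\beta\cQ_M}\) are not monotone in \(M\), since each factor \(e^{-\beta\kappa_j(Z_j^2-1)/2}\) is above \(1\) when \(Z_j^2<1\) and below \(1\) when \(Z_j^2>1\). Only the deterministic numbers \(\E e^{-\beta\cQ_M}\) increase.
\item The martingale argument is incomplete. The normalized products \(e^{-\beta\cQ_M}/\E e^{-\beta\cQ_M}\) are positive martingales and converge a.s., but a positive martingale need not converge in \(L^1\). Without uniform integrability, Fatou gives only \(\E e^{-\beta\cQ_{g,\rho}}\le\prod_j e^{\beta\kappa_j/2}(1+\beta\kappa_j)^{-1/2}\).
\item Invoking Scheff\'e would be circular, because it assumes the convergence of expectations you are trying to prove.
\end{itemize}

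The fix is one line, and it is what the paper does. Compute \(\E e^{-2\beta\cQ_M}=\exp\bigl(\tfrac12\sum_{j\le M}(2\beta\kappa_j-\log(1+2\beta\kappa_j))\bigr)\le\exp\bigl(\beta^2\sum_j\kappa_j^2\bigr)\). This shows \((e^{-\beta\cQ_M})_M\) is bounded in \(L^2\), hence uniformly integrable. Combined with \(\cQ_M\to\cQ_{g,\rho}\) in \(L^2\), and hence in probability, this gives the limit.

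Alternatively, note that \(\cQ_M=\E[\cQ_{g,\rho}\mid Z_1,\ldots,Z_M]\), because the tail is independent of the first \(M\) variables and has mean zero. Conditional Jensen then gives \(\E e^{-\beta\cQ_M}\le\E e^{-\beta\cQ_{g,\rho}}\) for every \(M\), which is exactly the inequality reverse to Fatou's.
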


The convergence in \eqref{eq:gaussian-chaos-limit} is the classical
second Wiener chaos limit for canonical \(U\)-statistics
\cite{DynkinMandelbaum1983,Lee2019}.  For fixed \(\beta>0\), applying
Theorem~\ref{thm:uniform-partition} at a slightly larger parameter gives
uniform integrability of \(e^{-\beta\cU_{N,\rho}(g)}\).  This upgrades
distributional convergence to convergence of the Laplace transforms and
yields \eqref{eq:determinant-limit}--\eqref{eq:determinant-log}.
Expanding \eqref{eq:determinant-log} at \(\beta=0\) yields
\[
 \log\cZ_{\infty,\rho}(\beta;g)
 =\frac{\beta^2}{4}\norm{g_\rho^\circ}_{L^2(\rho^{\otimes2})}^2+o(\beta^2),
 \qquad \beta\downarrow0.
\]
When \(g_\rho^\circ\ne0\), the positive quadratic coefficient shows that the
small-\(\beta\) order in Theorem~\ref{thm:uniform-partition} is sharp.

\subsubsection{Sharp dependence on \texorpdfstring{\(\beta\)}{beta}}

At large \(\beta\), the determinant formula turns the proof of the matching
lower bound into a spectral problem.

\begin{theorem}
\label{thm:sharp-coulomb-growth}
In addition to Assumption~\ref{ass:kernel-density}, assume that \(\rho\) is
bounded below by a positive constant almost everywhere on some ball
\(D\subset\mathbb R^d\).
If \(s=0\), there exist \(\beta_0\ge1\) and constants
\(c_{d,\rho,D},C_{d,\rho}\in(0,\infty)\) such that, for every
\(\beta\ge\beta_0\),
\begin{equation}\label{eq:sharp-log-growth}
 c_{d,\rho,D}\beta\log\beta
 \le\log\cZ_{\infty,\rho}(\beta;g_{d,0})
 \le\sup_{N\ge2}\log\cZ_{N,\rho}(\beta;g_{d,0})
 \le C_{d,\rho}\beta\log\beta.
\end{equation}
If \(0<s<d/2\), there exist \(\beta_0\ge1\) and constants
\(c_{d,s,m,\rho,D},C_{d,s,m,\rho}\in(0,\infty)\) such that, for every
\(\beta\ge\beta_0\),
\begin{equation}\label{eq:sharp-riesz-growth}
 c_{d,s,m,\rho,D}\beta^{d/(d-s)}
 \le\log\cZ_{\infty,\rho}(\beta;g_{d,s}^{(m)})
 \le\sup_{N\ge2}\log\cZ_{N,\rho}(\beta;g_{d,s}^{(m)})
 \le C_{d,s,m,\rho}\beta^{d/(d-s)}.
\end{equation}
\end{theorem}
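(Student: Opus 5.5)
The upper bounds in \eqref{eq:sharp-log-growth}--\eqref{eq:sharp-riesz-growth} are already contained in Theorem~\ref{thm:uniform-partition}, since \(\beta\log(2+\beta)\le 2\beta\log\beta\) for \(\beta\ge\beta_0\). The middle inequality \(\log\cZ_{\infty,\rho}\le\sup_N\log\cZ_{N,\rho}\) holds because \(\cZ_{\infty,\rho}\) is a limit by Theorem~\ref{thm:gaussian-determinant}. The proof therefore reduces to the lower bounds. By \eqref{eq:determinant-log}, and because \(x-\log(1+x)\ge x/2\) for \(x\ge 3\), we have
\[
 \log\cZ_{\infty,\rho}(\beta;g)\ge \tfrac14\,\beta\sum_{j:\ \kappa_j\ge 3/\beta}\kappa_j .
\]
So it suffices to show that the positive eigenvalues of \(A_{g,\rho}\) satisfy a counting lower bound. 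Let \(n(\lambda):=\#\{j:\kappa_j\ge\lambda\}\). We aim for \(n(\lambda)\gtrsim\lambda^{-d/(d-s)}\) for \(s>0\), and \(n(\lambda)\gtrsim\lambda^{-1}\) for \(s=0\). In the Riesz case this gives
\[
 \beta\sum_{\kappa_j\ge3/\beta}\kappa_j\ge \beta\cdot\tfrac3\beta\, n(3/\beta)\gtrsim\beta^{d/(d-s)}.
\]
In the log case we use instead \(\sum_{\kappa_j\ge 3/\beta}\kappa_j\ge\int_{3/\beta}^{c}n(\lambda)\dd\lambda\gtrsim\log\beta\).

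To bound \(n(\lambda)\) from below we use the min--max principle. It is enough to exhibit, for each small \(\lambda\), a subspace \(V\subset L^2(\rho)\) of dimension \(\gtrsim\lambda^{-d/(d-s)}\) on which \(\langle A_{g,\rho}f,f\rangle_{L^2(\rho)}\ge\lambda\norm f_{L^2(\rho)}^2\). We take \(V\) spanned by bumps \(\phi_k(x)=\psi((x-x_k)/h)\), supported in disjoint cubes of side \(h\) inside \(D\), with \(\psi\) of mean zero. We further choose the \(\phi_k\) with \(\int\phi_k\rho=0\). This is possible because \(\rho\) is merely bounded, but replacing \(\phi_k\) by \(\phi_k/\rho\) on \(D\), where \(\rho\ge c_D\), makes \(\phi_k\rho\) an honest smooth mean-zero bump. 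For \(f\) with \(\int f\rho=0\), the centering terms drop out, and with \(\mu=f\rho\) we get
\[
 \langle A_{g,\rho}f,f\rangle=\iint g(x-y)\mu(x)\mu(y)\dd x\dd y=(2\pi)^{-d}\int\widehat g(\xi)\abs{\widehat\mu(\xi)}^2\dd\xi .
\]
This holds also for \(s=0\), since \(\mu\) has mean zero and the logarithmic kernel acts as \(|\xi|^{-d}\) on such \(\mu\). Take \(\mu=\sum_k a_k\psi_0((x-x_k)/h)\) with \(\psi_0\) smooth, mean zero, and with \(\widehat{\psi_0}\) concentrated on an annulus \(|\xi|\sim1\). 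Then \(\widehat g(\xi)\ge c\,|\xi|^{-(d-s)}\) for \(|\xi|\lesssim 1/h\) gives, up to controlling cross terms,
\[
 \langle A f,f\rangle\gtrsim h^{d-s}\cdot h^{d}\sum a_k^2 ,\qquad \norm f_{L^2(\rho)}^2\lesssim h^d\sum a_k^2 .
\]
This holds also with \(m>0\) once \(h\le 1/m\). Hence the Rayleigh quotient is at least \(c\,h^{d-s}\) on a space of dimension \(\sim h^{-d}|D|\). Setting \(\lambda=ch^{d-s}\) gives \(n(\lambda)\gtrsim\lambda^{-d/(d-s)}\). For \(s=0\) the same computation gives \(\lambda\sim h^{d-0}\), and the sum over dyadic scales produces the logarithm.

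The main obstacle is the cross terms. The quadratic form \(\langle A\mu,\mu\rangle\) for a sum of bumps is not diagonal, and we need the lower bound \(\sum_{k,l}a_ka_l\langle g*\mu_k,\mu_l\rangle\ge c\,h^{2d-s}\sum a_k^2\) uniformly in the coefficients. We will handle this via Fourier positivity rather than off-diagonal decay. Write \(\mu=\psi_{0,h}*\nu\), where \(\nu=\sum a_k\delta_{x_k}\) lives on a lattice of spacing \(h\) (enlarging the spacing by a fixed factor to keep supports disjoint). Then
\[
 \int\widehat g\,\abs{\widehat{\psi_{0,h}}}^2\abs{\widehat\nu}^2\ge \Bigl(\inf_{|\xi|\in[a/h,b/h]}\widehat g\Bigr)\int_{|\xi|\in[a/h,b/h]}\abs{\widehat{\psi_{0,h}}}^2\abs{\widehat\nu}^2 .
\]
The remaining integral is comparable to \(\norm{\mu}_{L^2}^2\) by periodicity of \(\widehat\nu\) with period \(2\pi/h\)-cells, choosing \(\widehat{\psi_0}\) supported in an annulus and nonvanishing on a fundamental domain's worth of it. The price is that \(\psi_0\) is Schwartz rather than compactly supported. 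We compensate by truncating the expansion to a large ball where \(\rho\ge c_D\) fails only at negligible mass. If this is too delicate, the fallback is a direct Schur-type off-diagonal estimate using the decay \(|\langle g*\mu_k,\mu_l\rangle|\lesssim h^{2d}h^{2}|x_k-x_l|^{-(s+2)}\), which comes from the vanishing mean and moments of \(\psi_0\), with lattice spacing \(Lh\) for large \(L\).
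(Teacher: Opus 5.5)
Your reduction matches the paper's. The upper bounds come from Theorem~\ref{thm:uniform-partition}. The lower bounds come from \eqref{eq:determinant-log} plus an eigenvalue count, obtained by min--max on a span of mean-zero bumps \(u\) supported in \(D\) and tested with \(f=u/\rho\). Your \(n(\lambda)\gtrsim\lambda^{-d/(d-s)}\) is equivalent to the paper's \(\kappa_j\ge c\,j^{-(d-s)/d}\), and your summations are correct, including \(\int_{3/\beta}^{c}n(\lambda)\dd\lambda\gtrsim\log\beta\) when \(s=0\).

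The gap is the step you flag as the main obstacle: the bound \(\iint g(x-y)u(x)u(y)\dd x\dd y\ge c\,h^{d-s}\norm u_2^2\), uniformly on the span. Neither of your routes establishes it.

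\emph{Primary route.} \(\widehat{\psi_0}\) is supported in an annulus, so \(\widehat\mu\) is compactly supported. Then \(\mu\) is real-analytic and cannot vanish on the open set \(\mathbb R^d\setminus\overline D\). But you need \(\mu=f\rho\) with \(\norm f_{L^2(\rho)}^2=\int\abs\mu^2/\rho\lesssim\norm\mu_2^2\), and \(\rho\) is only bounded below on \(D\); it may vanish outside. Truncating to a region ``where \(\rho\ge c_D\) fails only at negligible mass'' does not fix this. Once \(\mu\) is cut off to \(D\), you lose its Fourier support and its exact zero mean, which is indispensable for \(s=0\) where \(\widehat g=\abs\xi^{-d}\). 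The lower bound would then have to be recovered by a perturbation estimate in the \(\widehat g\)-weighted norm, which you do not give.

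\emph{Fallback.} It fails as written. With only \(\int\psi_0=0\), you get \(\abs{\langle g*\mu_k,\mu_l\rangle}\lesssim h^{2d+2}\abs{x_k-x_l}^{-(s+2)}\). The Schur row sums are then of order \(h^{2d-s}L^{-(s+2)}\sum_{0<\abs n\lesssim(Lh)^{-1}}\abs n^{-(s+2)}\). Relative to the diagonal \(\asymp h^{2d-s}\), this is unbounded as \(h\to0\) whenever \(s+2\le d\): for \(g_{2,0}\), for \(g_{3,1}\), and for every \(d\ge4\). To close it you would need \(M\) vanishing moments with \(2M+s>d\).

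The missing observation is that no off-diagonal analysis is needed. Keep compactly supported bumps with disjoint supports in a cube \(Q\Subset D\) at scale \(h\). Every derivative \(\partial^\gamma u\) of an element of their span is again a sum of disjointly supported pieces. Hence \(\norm{\partial^\gamma u}_2\le C_\gamma h^{-\abs\gamma}\norm u_2\) on the whole span, and interpolation gives \((2\pi)^{-d}\int(m^2+\abs\xi^2)^{(d-s)/2}\abs{\widehat u}^2\dd\xi\lesssim h^{-(d-s)}\norm u_2^2\). Since \(\widehat g=(m^2+\abs\xi^2)^{-(d-s)/2}\), Cauchy--Schwarz in Fourier variables,
\[
 \norm u_2^4\le\Bigl((2\pi)^{-d}\int\widehat g\,\abs{\widehat u}^2\dd\xi\Bigr)\Bigl((2\pi)^{-d}\int\abs{\widehat u}^2/\widehat g\,\dd\xi\Bigr),
\]
immediately yields \(\iint g(x-y)u(x)u(y)\dd x\dd y\gtrsim h^{d-s}\norm u_2^2\). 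Equivalently, the inverse inequality and Chebyshev place half of \(\int\abs{\widehat u}^2\) in \(\abs\xi\le b/h\), where the radially nonincreasing \(\widehat g\) is \(\gtrsim h^{d-s}\). This is exactly how the paper proves Lemma~\ref{lem:model-eigen-lower}. With it, the rest of your argument goes through.
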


For the lower bound, localized test functions supported where \(\rho\) is
bounded below, together with the max--min principle, give the required
eigenvalue count.  Combined with the finite-particle upper estimate in
\eqref{eq:riesz-parameter-bound}, this proves that the bounds have the correct
order as \(\beta\to\infty\).

The eigenvalues above the scale \(\beta^{-1}\) are the active modes in this
argument.  Their distribution and cumulative contribution explain the two
growth laws: in the logarithmic case they generate a harmonic sum and hence
the additional \(\log\beta\) factor, whereas in the Riesz case they yield the
power \(\beta^{d/(d-s)}\).

\subsection{Related works}

For regular interactions, mean-field Laplace expansions are classical
\cite{KusuokaTamura1984,MesserSpohn1982,Bolthausen1986,
BenArousBrunaud1990}, as are their stability under approximation
\cite{Guionnet1999}.  Canonical statistics converge to the second Wiener chaos
under general conditions \cite{DynkinMandelbaum1983}, while quantitative
non-Gaussian approximation results such as \cite{BentkusGotze1999} address
the corresponding distributional limit.  The present problem is different:
for singular interactions, we need exponential moments uniform in \(N\) and
valid beyond the perturbative temperature regime.  The comparison following
Theorem~\ref{thm:uniform-partition} describes the closest uniform
partition-function estimates for singular interactions, due to
Duerinckx--Jabin \cite{DuerinckxJabin2026} and Delgadino--Gvalani
\cite{DelgadinoGvalani2025}.

A complementary literature studies quantitative approximate independence and
concentration for regular mean-field Gibbs measures.  Lacker
\cite[Theorem~2.2]{Lacker2021} and Ren--Wang
\cite[Theorem~1]{RenWang2025} prove sharp local-chaos estimates under
transport and logarithmic-Sobolev assumptions, respectively.
Delgadino--Gvalani--Pavliotis--Smith
\cite[Theorems~3.6--3.7]{DelgadinoGvalaniPavliotisSmith2023} derive equilibrium
closeness and uniform-in-time propagation of chaos from a nondegenerate
particle logarithmic-Sobolev inequality.  Those results control marginals or
dynamical entropy under regularity or high-temperature hypotheses; our focus
is the centered partition function for singular kernels and the consequences
of its all-temperature bound.

Our fixed-\(\beta\), centered theory is also distinct from the microscopic
next-order theory of Coulomb and Riesz gases.  In the latter setting, the
parameter may vary with \(N\), and renormalized energies retain spatial
information that is absent from the determinant limit
\cite{RougerieSerfaty2016,PetracheSerfaty2017,LebleSerfaty2017,
Serfaty2023,Lewin2022,SerfatyLectures2024}.

For dynamics, we build on the relative-entropy and modulated free energy
framework of
\cite{JabinWang2018,BreschJabinWang2020,BreschJabinWang2023}.  In particular,
Proposition~\ref{prop:coulomb-modulated-identity} adapts the identity of
Bresch--Jabin--Wang \cite{BreschJabinWang2023}.  In the qualitative
fluctuation theory of Wang--Zhao--Zhu, entropy duality and exponential bounds
under the product law control the singular quadratic remainder, and a dual
backward equation identifies the Gaussian limit
\cite[Lemma~2.9, Theorem~1.4, and Proposition~1.5]{WangZhaoZhu2023}.
Hao--Zhang--Zhao use a related concentration mechanism for kinetic
McKean--Vlasov equations and obtain quantitative bounds for smooth tests
\cite[Section~4]{HaoZhangZhao2026}.

The distinction in our fluctuation argument is the reference law.  For
product initial data, a uniform partition-function estimate and the modulated
free energy inequality give a uniform entropy bound relative to the centered Gibbs
measure.  This single bound controls both empirical linear statistics and the
singular quadratic term, without first bounding the total entropy relative to
the product law.  We combine it with the finite-dimensional Gaussian
comparison from \cite[Section~4]{HaoZhangZhao2026} and a multivariate
Berry--Esseen estimate for the initial i.i.d.\ field.  The result is a direct
\(N^{-1/2}\) estimate for smooth finite-dimensional tests, independent of a
prior process-level convergence theorem.

M.~G.~Delgadino informed the first-named author that he, R.~S.~Gvalani, and
M.~Rosenzweig independently obtained a uniform partition-function estimate
of a similar form by different truncation methods \cite{Delgadino2026Personal}.

Our companion article \cite{WangZhao2026Attractive} treats the attractive
two-dimensional Coulomb interaction at the explicit log--HLS equilibrium
\(\rho_*(x)=[\pi(1+\abs{x}^2)^2]^{-1}\).  With the \(1/N\) pair normalization
in \eqref{eq:normalization-scaling} and \(T_*\) denoting the centered
interaction operator at \(\rho_*\), it proves uniform exponential
integrability and convergence of the attractive Laplace transform to
\(\dettwo(\Id-\beta T_*)^{-1/2}\) throughout the sharp range
\(0<\beta<8\pi\); indeed, for every fixed \(N\ge2\), the exponential moment
is finite if and only if \(\beta<8\pi\).  For the same normalization and
kernel \(-(2\pi)^{-1}\log\abs{x-y}\), Grotto's positive parameter \(\beta\)
is the attractive coupling, corresponding to physical inverse temperature
\(-\beta\), so the collision threshold is again \(8\pi\).  On
\(\mathbb T^2\), Grotto \cite[Theorem~1]{Grotto2026} proves the determinant
limit and Gaussian energy--enstrophy fluctuations for an unspecified range
\(0<\beta<\beta_0<8\pi\). 

\subsection{Outline of the article}

Section~\ref{sec:preliminary} introduces the notation and graph estimates.
Section~\ref{sec:partition-estimates} proves the abstract exponential estimate
and a uniform partition-function estimate.  The next three sections develop
applications: Section~\ref{sec:gibbs-integrability} treats Gibbs entropy and
thermal equilibrium; Section~\ref{sec:optimal-modulated-free-energy} proves
the commutator estimate and quantitative entropic chaos; and
Section~\ref{sec:gaussian-fluctuations} establishes the fixed-time
\(N^{-1/2}\) Gaussian fluctuation bound.  Section~\ref{sec:second-order-limits}
identifies the second Wiener chaos and determinant limits and determines the
sharp dependence on \(\beta\).

\subsection*{Acknowledgements}
Z.~Wang thanks Matias Delgadino for sharing updates on recent joint work
\cite{Delgadino2026Personal} and Pierre-Emmanuel Jabin for discussions about
positive- and negative-definite decompositions in uniform partition-function
estimates.  This work was partially supported by the National Key R\&D Program of China (Project
No.~2024YFA1015500) and the National Natural Science Foundation of China
(NSFC; Grant Nos.~12595282 and 12171009).

\section{Preliminaries}
\label{sec:preliminary}

This section collects the notation, kernel assumptions, and graph estimates
used below.

\subsection{Notation}
\label{subsec:notation}

We collect the conventions used throughout the article;
symbols used only in a particular proof are defined when first
introduced.  For a finite signed measure
\(\nu\) on a measurable space \(E\) and an integrable function \(f\), we write
\[
 \langle\nu,f\rangle:=\int_E f\dd\nu,
 \qquad
 \|\nu\|_{\TV}:=|\nu|(E).
\]
For probability measures \(P\) and \(R\) on the same measurable space, their
relative entropy is
\[
 \Ent(P\mid R)
 :=\begin{cases}
 \displaystyle\int\log\!\left(\frac{\dd P}{\dd R}\right)\dd P,
     &P\ll R,\\
 +\infty,&\text{otherwise}.
 \end{cases}
\]
For \(p>1\), their R\'enyi divergence of order \(p\) is
\[
 D_p(P\mid R)
 :=\frac1{p-1}\log\int
 \left(\frac{\dd P}{\dd R}\right)^p\dd R,
\]
again with value \(+\infty\) when \(P\not\ll R\).
Notice that \(\Ent(P\mid R)\le D_p(P\mid R)\).

If \(\mu\) is a probability measure on \(E\), then \(\mu^{\otimes S}\)
denotes the product measure indexed by a finite set \(S\), and
\(\mu^{\otimes N}\) is its \(N\)-fold product.  Unless stated otherwise,
\(X_1,\ldots,X_N\) are independent with law \(\mu\), and
\[
 X^{1:N}=(X_1,\ldots,X_N),\qquad X_S=(X_i)_{i\in S}.
\]
On \(\mathbb R^{dN}\), \(\dd X^{1:N}=\dd x_1\cdots\dd x_N\).  We write
\(\E_P\) for expectation under \(P\), and omit the subscript when the law is
clear.  Statements about kernels on \(E^2\) are understood
\(\mu^{\otimes2}\)-almost everywhere.  Unless another measure is specified,
essential suprema in one variable are taken with respect to \(\mu\).

For a measurable kernel \(K:E\times E\to\mathbb R\) and
\(p\in[1,\infty)\), set
\[
\begin{aligned}
 \norm{K}_{L_x^\infty L_y^p(\mu)}
 &:=\esssup_x\left(\int_E\abs{K(x,y)}^p\,\mu(\dd y)\right)^{1/p},\\
 \norm{K}_{L_y^\infty L_x^p(\mu)}
 &:=\esssup_y\left(\int_E\abs{K(x,y)}^p\,\mu(\dd x)\right)^{1/p}.
\end{aligned}
\]

For a measure \(\nu\), the spaces \(L^p(\nu)\) and their norms have their
usual meaning; when the measure or domain is clear, we abbreviate the norm by
\(\|\cdot\|_p\).  We identify a density \(\rho\) with the measure
\(\rho(x)\dd x\), so in particular \(L^p(\rho)=L^p(\rho(x)\dd x)\).
The space \(C_c^\infty\) consists of smooth
compactly supported functions, while \(C_b^k\) consists of functions whose
derivatives up to order \(k\) are continuous and bounded; its norm is the sum
of the corresponding supremum norms.  We use the standard Sobolev notation
\(W^{k,p}\) and \(H^k=W^{k,2}\).
We set
\[
 L_0^2(\rho)
 :=\left\{f\in L^2(\rho):
 \int_{\mathbb R^d}f(x)\rho(x)\dd x=0\right\}.
\]
For an integrable two-variable kernel \(K\),
its centering with respect to \(\rho\) is
\[
 K_\rho^\circ(x,y)
 :=K(x,y)-\int K(x,z)\rho(z)\dd z
            -\int K(z,y)\rho(z)\dd z
            +\iint K(z,z')\rho(z)\rho(z')\dd z\dd z'.
\]
Thus \(g_\rho^\circ\) denotes this centering for
\(K(x,y)=g(x-y)\).  We omit \(X^{1:N}\) from
\(\cU_{N,\rho}(g;X^{1:N})\) whenever the configuration is clear.

Let \(H_1,H_2\) be separable Hilbert spaces.  For a bounded operator
\(A:H_1\to H_2\), \(\|A\|_{\mathrm{op}}\) denotes the operator norm.  The
operator is Hilbert--Schmidt if, for one (equivalently, every) orthonormal
basis \((e_j)\) of \(H_1\),
\[
 \|A\|_{\mathrm{HS}}^2:=\sum_j\|Ae_j\|_{H_2}^2<\infty.
\]
Every Hilbert--Schmidt operator is compact.  Moreover, the integral operator
\[
 A_Kf(x):=\int_E K(x,y)f(y)\,\mu(\dd y)
\]
associated with \(K\in L^2(\mu^{\otimes2})\) is Hilbert--Schmidt, with
\(\|A_K\|_{\mathrm{HS}}=\|K\|_{L^2(\mu^{\otimes2})}\).  If \(A\) is a
self-adjoint Hilbert--Schmidt operator with nonzero eigenvalues
\((\kappa_j)\), counted with multiplicity, then
\[
 \|A\|_{\mathrm{HS}}^2=\sum_j\kappa_j^2.
\]
For finite matrices, \(\|\cdot\|_{\mathrm{HS}}\) is the Frobenius norm.  We
write \(\Id\) for the identity and \(u\otimes v\) for the rank-one operator
\(h\mapsto\langle v,h\rangle u\).  For a self-adjoint Hilbert--Schmidt
operator \(A\) with eigenvalues \((\kappa_j)\), the Carleman--Fredholm determinant
is denoted by
\[
 \dettwo(\Id+A):=\prod_j(1+\kappa_j)e^{-\kappa_j}.
\]

Finally, \(\mathbf1\) denotes the constant function equal to one or the
all-ones vector, according to context.  For \(u\in\mathbb R\), set
\(u_-=\max\{-u,0\}\).  Throughout, \(C\) denotes a finite positive constant
whose value may change from line to line; displayed subscripts indicate its
permitted dependencies.

\subsection{Marginal cancellation and graph estimates}

Let \(K:E\times E\to\mathbb R\) be measurable.  We impose the two marginal
cancellation conditions
\begin{equation}\label{eq:canonical-assumptions}
 \int_E K(x,y)\,\mu(\dd y)=0,
 \quad \mu\text{-a.e. }x,
 \qquad
 \int_E K(x,y)\,\mu(\dd x)=0,
 \quad \mu\text{-a.e. }y.
\end{equation}
We also assume that
\begin{equation}\label{eq:kernel-assumptions}
 \norm{K}_{L^2(\mu^{\otimes2})}
 +\norm{K_-}_{L^\infty(\mu^{\otimes2})}<\infty.
\end{equation}
In the terminology of \(U\)-statistics,
\eqref{eq:canonical-assumptions} means that \(K\) is \(\mu\)-canonical, or
completely degenerate.  For independent \(X_1,\ldots,X_N\) with law \(\mu\),
set
\[
 \cU_{N,\mu}(K;X^{1:N})
 :=\frac1{N-1}\sum_{1\le i<j\le N}K(X_i,X_j).
\]
When \(K\) is symmetric, this is the usual scaled canonical \(U\)-statistic.
With \(E=\mathbb R^d\), \(\mu(\dd x)=\rho(x)\dd x\), and
\(K=g_\rho^\circ\), identity \eqref{eq:modulated-energy} identifies it with
the modulated energy.

The proof of Theorem~\ref{thm:canonical} uses the Penrose
tree-graph identity \eqref{eq:penrose}.  For a finite ordered vertex set \(S\), let
\(\mathcal C(S)\) and \(\mathcal T(S)\) denote the connected graphs and the
trees with vertex set \(S\), respectively.  Fix a rooted Penrose partition
scheme.  For \(T\in\mathcal T(S)\), let \(R(T)\) be the associated set of
non-tree edges.  Then, for arbitrary real edge weights \(z_e\),
\begin{equation}\label{eq:penrose}
 \sum_{G\in\mathcal C(S)}
 \prod_{e\in E(G)}z_e
 =\sum_{T\in\mathcal T(S)}
 \left(\prod_{e\in E(T)}z_e\right)
 \left(\prod_{e\in R(T)}(1+z_e)\right).
\end{equation}
This identity is standard; see
\cite[Section~4.1]{FernandezProcacci2007} and
\cite[Definition~7.1, Example~7.2, and Equation~(7.1)]{Jansen2018}.

The following lemma provides the estimates used with \eqref{eq:penrose} in the
proof of Theorem~\ref{thm:canonical}.  Part~(ii) extends the corresponding
estimate of Duerinckx and Jabin
\cite[proof of Theorem~2.1(i), Section~2.1]{DuerinckxJabin2026}
to an arbitrary probability space.

\begin{lemma}\label{lem:tree-unicyclic}
Let \(F:E\times E\to\mathbb R\) be measurable and assume that
\[
 \max\left\{
  \norm{F}_{L_x^\infty L_y^1(\mu)},
  \norm{F}_{L_y^\infty L_x^1(\mu)}
 \right\}<\infty.
\]
For an edge \(e=\{i,j\}\) with \(i<j\), write
\(F_e=F(x_i,x_j)\).
\begin{enumerate}
\item[(i)] Suppose that
\begin{equation}\label{eq:positive-marginals}
 \int_E F(x,y)\,\mu(\dd y)\ge0
 \quad\text{for \(\mu\)-a.e. \(x\)},
 \qquad
 \int_E F(x,y)\,\mu(\dd x)\ge0
 \quad\text{for \(\mu\)-a.e. \(y\)}.
\end{equation}
If \(T\) is a tree on \(\{1,\ldots,n\}\), where \(n\ge2\), then
\[
 \left|\int_{E^n}\prod_{e\in E(T)}F_e\,
 \mu^{\otimes n}(\dd X^{1:n})\right|
 \le\left(\iint_{E^2}F(x,y)\,\mu(\dd x)\mu(\dd y)\right)
 \max\left\{
  \norm{F}_{L_x^\infty L_y^1(\mu)},
  \norm{F}_{L_y^\infty L_x^1(\mu)}
 \right\}^{n-2}.
\]

\item[(ii)] Suppose that \(F\in L^2(\mu^{\otimes2})\).  If
\(G\) is a connected graph on \(\{1,\ldots,n\}\) with exactly one cycle,
then
\[
 \int_{E^n}\prod_{e\in E(G)}|F_e|\,
 \mu^{\otimes n}(\dd X^{1:n})
 \le \norm{F}_{L^2(\mu^{\otimes2})}^2
 \max\left\{
  \norm{F}_{L_x^\infty L_y^1(\mu)},
  \norm{F}_{L_y^\infty L_x^1(\mu)}
 \right\}^{n-2}.
\]
\end{enumerate}
\end{lemma}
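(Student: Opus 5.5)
Both parts will be proved by leaf-peeling induction on the number of vertices \(n\). Set \(M:=\max\{\norm{F}_{L_x^\infty L_y^1(\mu)},\norm{F}_{L_y^\infty L_x^1(\mu)}\}\).

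\textbf{Part (i).} The key observation is that, under \eqref{eq:positive-marginals}, the marginals
\[
 a(x):=\int F(x,y)\,\mu(\dd y)\ge0,\qquad b(y):=\int F(x,y)\,\mu(\dd x)\ge0
\]
are nonnegative. Moreover each is bounded by \(M\), and \(\int a\,\dd\mu=\int b\,\dd\mu=I:=\iint F\,\dd\mu^{\otimes2}\ge0\).

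For a tree \(T\) with \(n\ge3\), I pick a leaf \(\ell\) attached to its neighbour \(k\) and integrate out \(x_\ell\) first. This replaces the factor \(F_{\{k,\ell\}}\) by \(a(x_k)\) or \(b(x_k)\), depending on the orientation of the edge. Iterating, the integral becomes \(\int\prod_{e\in E(T')}F_e\prod_v w_v(x_v)\,\dd\mu\), where \(T'\) is a smaller tree and each weight \(w_v\) is a product of nonnegative bounded marginals.

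The absolute values cannot be pulled inside directly, since that would lose the signed information in the last edge. I therefore peel leaves until a single edge \(\{i,j\}\) remains, carrying a nonnegative weight \(w_i(x_i)w_j(x_j)\).

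At that point the plan is to collapse the single edge to a marginal, which exposes a factor \(I\). Concretely, I peel every vertex except one: the first leaf removed produces a marginal, say \(a\), evaluated at the surviving vertex \(r\). All other peelings are bounded crudely. The idea is to order the peeling along the tree so that the final vertex \(r\) carries the weight \(a\) exactly once, and integrating \(a\) against \(\mu\) gives \(I\).

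\textbf{Main obstacle.} Bounding the intermediate factors by \(M\) in absolute value loses positivity. I will instead use the pointwise bounds \(0\le a,b\le M\) together with the following monotonicity argument. Inductively, the integral over the remaining variables of the weight times the remaining edges is an explicit product of marginal-type functions, and these are nonnegative because every peeled subtree yields a nonnegative function. The claim to prove by induction is: for any rooted tree, integrating out everything except the root gives a function \(\phi\) with \(0\le\phi\le M^{n-1}\). Then I write the full integral as \(\int\phi_1(x)F(x,y)\phi_2(y)\,\dd\mu\) after splitting at the edge at the first leaf, and bound it by \(M^{n-2}I\), using \(\int F(x,y)\phi_2(y)\,\mu(\dd y)\) again together with nonnegativity. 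If necessary, I restrict to the case where \(F\) itself is handled through \(F\)-weighted integrals whose positivity comes from the marginals. This is the delicate point.

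\textbf{Part (ii).} Let \(G\) have its unique cycle \(C\) of length \(k\ge3\), with trees hanging off it (for a simple graph \(k\ge3\)). Working with \(|F|\ge0\), I peel leaves one at a time. Each peeled leaf contributes at most \(M\), since \(\sup_x\int|F(x,y)|\,\mu(\dd y)\le M\) and the bound in the other variable is symmetric. This removes \(n-k\) vertices at total cost \(M^{n-k}\).

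For the bare cycle, I apply Cauchy--Schwarz to the pair of edges \(\{1,2\}\) and \(\{k,1\}\), or equivalently integrate the path \(2\to3\to\cdots\to k\). Bounding the path kernel \(P(x_2,x_k)=\int\prod|F|\) by Schur's test, the operator \(|F|\) has \(L^2\) operator norm at most \(M\). The cycle integral equals \(\operatorname{tr}(|A_F|^{k})\le\|A_{|F|}\|_{\mathrm{HS}}^2\|A_{|F|}\|_{\mathrm{op}}^{k-2}\le\norm{F}_2^2M^{k-2}\). Combining this with the tree part gives the stated bound \(\norm{F}_{L^2(\mu^{\otimes2})}^2M^{n-2}\).
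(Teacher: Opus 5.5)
\textbf{Part (ii).} This matches the paper: you peel the trees hanging off the cycle at cost $M$ each, then bound the cycle integral by a trace using $\norm{\cdot}_{\mathrm{HS}}^2\norm{\cdot}_{\mathrm{op}}^{r-2}$ and Schur's test. Two small corrections are needed. Since $\abs{F}$ need not be symmetric, the cycle integral is $\operatorname{Tr}(B_1\cdots B_r)$ with each $B_k$ equal to $A_{\abs F}$ or its adjoint, which have the same norms; it is not $\operatorname{tr}(\abs{A_F}^k)$. Also, the paper truncates to $\abs{F}\wedge R$ and uses monotone convergence to justify the trace identity.

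\textbf{Part (i): the gap.} Your ``Concretely'' paragraph already contains the right idea. You then declare an obstacle that does not exist and replace the argument with a false one.

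After integrating the first leaf $i$ exactly, the integrand is $h_j(x_j)\prod_{e\ne\{i,j\}}F_e$, where $h_j\ge0$ is the appropriate marginal at the neighbour $j$. Nonnegativity of this single factor is all you need:
$\bigl|\int h_j\prod_{e\ne\{i,j\}}F_e\bigr|\le\int h_j\prod_{e\ne\{i,j\}}\abs{F_e}$.
From then on the integrand is nonnegative. You peel the remaining $n-2$ vertices other than $j$ at cost $M$ each; a tree with at least two vertices always has a leaf different from $j$. Finally $\int h_j\dd\mu=\iint F\dd\mu^{\otimes2}$ finishes the proof. No other vertex should be integrated exactly, and no positivity of intermediate functions is required.

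\textbf{The fallback is false.} Your proposed induction hypothesis is that integrating a rooted tree down to its root gives $0\le\phi\le M^{n-1}$. This fails because $F$ is signed. Take $\mu$ uniform on two points and $F=\left(\begin{smallmatrix}4&-2\\-2&2\end{smallmatrix}\right)$. Both marginals equal $(1,0)$, so the hypotheses of (i) hold. For the path $1$--$2$--$3$ rooted at $1$, however,
$\phi(x_1)=\int F(x_1,x_2)a(x_2)\,\mu(\dd x_2)=\tfrac12F(x_1,1)$,
which equals $-1$ at $x_1=2$.

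For the same reason your ``iterating'' step breaks once you peel a leaf that already carries a weight. The quantity $\int w(x_k)F(x_{k'},x_k)\,\mu(\dd x_k)$ is not a marginal and need not be nonnegative. So you cannot in general reduce to a single edge with nonnegative marginal weights at both ends. Even a reduced term $\int w_iFw_j\dd\mu^{\otimes2}$ is controlled only through the same absolute-value step. As written, part (i) stops at ``the delicate point'' with an invalid fallback.
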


\begin{proof}
Write
\[
 M:=\max\left\{
  \norm{F}_{L_x^\infty L_y^1(\mu)},
  \norm{F}_{L_y^\infty L_x^1(\mu)}
 \right\}.
\]
We use the same elementary estimate in both parts.  If vertex \(i\) has degree one
and \(j\) is its unique neighbor, then, for \(e=\{i,j\}\),
\begin{equation}\label{eq:one-vertex-integration}
 \int_E|F_e|\,\mu(\dd x_i)\le M
 \qquad\text{for \(\mu\)-a.e. \(x_j\)}.
\end{equation}
Indeed, this is one of the two mixed norm bounds, according to the order of
\(i\) and \(j\).  Since \(x_i\) occurs only in \(F_e\), after taking absolute
values the integration in \(x_i\) is bounded by \(M\) times the integral over
the remaining variables.  Repeating \eqref{eq:one-vertex-integration} along a
tree shows that the corresponding product is absolutely integrable, so
Fubini's theorem may be used below.

For part~(i), choose a vertex \(i\) of degree one in \(T\), and let \(j\) be
its unique neighbor.  Suppose first that \(i<j\).  By the second inequality in
\eqref{eq:positive-marginals},
\[
 h_j(x_j):=\int_EF(x_i,x_j)\,\mu(\dd x_i)\ge0
 \qquad\text{for \(\mu\)-a.e. \(x_j\)}.
\]
We first integrate in \(x_i\) before taking absolute values.  After deleting
\(i\), repeatedly choose a vertex of degree one different from \(j\), and
integrate its variable using \eqref{eq:one-vertex-integration}.  After \(n-2\)
such steps, only \(j\) remains.  Therefore,
\[
\begin{aligned}
 \left|\int_{E^n}\prod_{e\in E(T)}F_e\,
        \mu^{\otimes n}(\dd X^{1:n})\right|
 &\le \int_{E^{n-1}}h_j(x_j)
       \prod_{e\in E(T)\setminus\{\{i,j\}\}}|F_e|
       \prod_{k\ne i}\mu(\dd x_k)\\
 &\le M^{n-2}\int_Eh_j(x_j)\,\mu(\dd x_j)\\
 &=M^{n-2}\iint_{E^2}F(x,y)\,\mu(\dd x)\mu(\dd y).
\end{aligned}
\]
If \(j<i\), we instead use
\(h_j(x_j)=\int_EF(x_j,x_i)\,\mu(\dd x_i)\), which is nonnegative by the
first inequality in \eqref{eq:positive-marginals}; the calculation is the
same.  This proves part~(i).

For part~(ii), replace \(F\) by \(|F|\), which changes neither \(M\) nor the
\(L^2(\mu^{\otimes2})\) norm.  We may therefore assume that \(F\ge0\).  Let
\(v_1,\ldots,v_r\) be the vertices of the unique cycle of \(G\), listed in
cyclic order, and set \(v_{r+1}=v_1\).  Every other vertex belongs to a tree
that meets the cycle at one vertex.  Repeatedly choose a vertex of degree one
outside \(\{v_1,\ldots,v_r\}\) in the remaining graph and integrate its
variable using \eqref{eq:one-vertex-integration}.  After these \(n-r\) vertices
have been removed, we obtain
\[
 \int_{E^n}\prod_{e\in E(G)}F_e\,
 \mu^{\otimes n}(\dd X^{1:n})
 \le M^{n-r}
 \int_{E^r}\prod_{k=1}^rF_{\{v_k,v_{k+1}\}}
 \prod_{k=1}^r\mu(\dd x_{v_k}).
\]

It remains to estimate the integral over the cycle.  For \(R>0\), set
\(F_R=F\wedge R\), and let \(A_R\) be the integral operator with kernel
\(F_R\).  For \(h\in L^2(\mu)\),
\begin{align*}
 \norm{A_Rh}_{L^2(\mu)}^2
 &\le\int_E\left(\int_EF_R(x,y)\,\mu(\dd y)\right)
       \left(\int_EF_R(x,y)|h(y)|^2\,\mu(\dd y)\right)\mu(\dd x)\\
 &\le M^2\norm{h}_{L^2(\mu)}^2.
\end{align*}
The same calculation with the two variables interchanged gives
\[
 \norm{A_R}_{\mathrm{op}},\ \norm{A_R^*}_{\mathrm{op}}\le M,
 \qquad
 \norm{A_R}_{\mathrm{HS}}=\norm{A_R^*}_{\mathrm{HS}}
 =\norm{F_R}_{L^2(\mu^{\otimes2})}.
\]
Depending on the order of the two indices in each factor, the corresponding
cycle integral with \(F_R\) in place of \(F\) is
\(\operatorname{Tr}(B_1\cdots B_r)\), where every \(B_k\) is \(A_R\) or
\(A_R^*\).  By H\"older's inequality for Schatten norms,
\[
 \left|\operatorname{Tr}(B_1\cdots B_r)\right|
 \le \norm{B_1}_{\mathrm{HS}}\norm{B_2}_{\mathrm{HS}}
      \prod_{k=3}^r\norm{B_k}_{\mathrm{op}}
 \le \norm{F_R}_{L^2(\mu^{\otimes2})}^2M^{r-2}.
\]
By monotone convergence as \(R\uparrow\infty\), the cycle integral is at most
\(\norm{F}_{L^2(\mu^{\otimes2})}^2M^{r-2}\).  Combining this with the
\(n-r\) integrations outside the cycle gives
\[
 M^{n-r}\norm{F}_{L^2(\mu^{\otimes2})}^2M^{r-2}
 =\norm{F}_{L^2(\mu^{\otimes2})}^2M^{n-2},
\]
which proves part~(ii).
\end{proof}

\section{Uniform partition-function estimates}
\label{sec:partition-estimates}

In this section, we prove Theorem~\ref{thm:uniform-partition}.
In Section~\ref{subsec:general-exponential-estimate}, we prove a general
exponential estimate under marginal cancellation.  In
Section~\ref{subsec:heat-decomposition}, we establish the heat kernel
estimates for \(g_{d,s}^{(m)}\), which are combined in
Section~\ref{subsec:proof-uniform-partition} to complete the proof.

\subsection{General exponential estimate}
\label{subsec:general-exponential-estimate}

The following theorem establishes a uniform partition-function estimate for
every kernel satisfying the marginal cancellation and integrability assumptions
\eqref{eq:canonical-assumptions}--\eqref{eq:kernel-assumptions}.
Its proof follows the Mayer--Penrose argument of Duerinckx and Jabin; see
\cite[Theorem~2.1(i), Section~2.1, and Theorem~A.3]{DuerinckxJabin2026}.
In their centered formulation, the term associated with a tree vanishes.
Here the corresponding marginal integrals are only known to be nonnegative,
so Lemma~\ref{lem:tree-unicyclic}(i) estimates that term directly, while
part~(ii) treats the terms obtained by adding one edge to the tree.  The same
argument applies to nonsymmetric kernels on an arbitrary probability space.

\begin{theorem}
\label{thm:canonical}
Under the assumptions
\eqref{eq:canonical-assumptions}--\eqref{eq:kernel-assumptions}, there
exist universal constants \(c,C>0\) such that, for every
\(a>0\) satisfying
\begin{equation}\label{eq:canonical-smallness}
 a\norm{K_-}_{L^\infty(\mu^{\otimes2})}\le c,
\end{equation}
one has, for every \(N\ge2\),
\begin{equation}\label{eq:canonical-bound}
 1\le\E e^{-a\cU_{N,\mu}(K)}
 \le\exp\left(
 Ca^2\norm{K}_{L^2(\mu^{\otimes2})}^2
 \right).
\end{equation}
\end{theorem}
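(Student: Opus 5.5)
The lower bound is immediate. By \eqref{eq:canonical-assumptions}, \(\E\,\cU_{N,\mu}(K)=0\), so Jensen's inequality gives \(\E e^{-a\cU_{N,\mu}(K)}\ge1\).

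For the upper bound, set \(\lambda:=a/(N-1)\) and write
\[
e^{-a\cU_{N,\mu}(K)}=\prod_{i<j}\bigl(1+f_{ij}\bigr),
\qquad
f_{ij}:=e^{-\lambda K(X_i,X_j)}-1 .
\]
This is the Mayer expansion. Expanding the product over subgraphs and grouping by connected components, the cluster expansion gives
\[
\log\E e^{-a\cU_{N,\mu}(K)}
=\sum_{n\ge2}\binom Nn\,\frac{\text{(connected weight on $n$ labelled vertices)}}{1}
\]
plus corrections, provided the series converges absolutely. I would use the standard Kotecký--Preiss, or Penrose tree-graph, convergence criterion as in \cite[Theorem~A.3]{DuerinckxJabin2026}. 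It requires bounding
\[
\sum_{n}\frac{N^{n-1}}{(n-1)!}\,\sup_{x_1}\Bigl|\int\sum_{G\in\mathcal C(\{1..n\})}\prod_{e\in E(G)}f_e\,\mu^{\otimes(n-1)}\Bigr|,
\]
and the resulting \(\log\) bound by a constant times \(a^2\norm{K}_2^2\).

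Apply \eqref{eq:penrose} with \(z_e=f_e\). By \eqref{eq:canonical-smallness}, \(\lambda K\ge -c/(N-1)\ge -c\), so \(f_e\le e^{c}-1\) and \(1+f_e=e^{-\lambda K}\in[0,e^{c}]\). Each non-tree factor \(\prod_{R(T)}(1+f_e)\) is therefore \(\le e^{c\abs{R(T)}/(N-1)}\). This is harmless once one notes \(\abs{R(T)}\le\binom n2\), but that needs a cheaper route. Better: keep \(\prod_{R(T)}(1+f_e)\le 1\) up to the factor \(e^{\lambda\norm{K_-}_\infty\binom n2}\). Here I would refine by splitting \(1+f_e=1+(f_e)\), taking only the term 1 for the tree contribution, and bounding the remainder, which carries at least one extra \(f_e\), by a unicyclic graph. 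The mixed norms of \(F=f\) are controlled via \(\abs{f_e}\le e^{c}\lambda\abs{K}\). Since \(\lambda\norm{K}_{L^\infty_xL^1_y}\) could be infinite, I would instead use \(\abs f\le C\min(\lambda\abs K,\cdot)\) together with \(L^1\le L^2\) after the first step. Most cleanly, I would work with \(F=f\), for which \(\norm F_{L^\infty_xL^1_y}\le C\lambda\norm{K}_{L^\infty L^1}\) is assumed finite; otherwise truncate \(K\) and pass to the limit by monotone convergence.

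The tree terms use Lemma~\ref{lem:tree-unicyclic}(i) with \(F=f\). Since \(e^{-u}-1\ge -u\), the marginals of \(f\) dominate those of \(-\lambda K\), which vanish; this gives \eqref{eq:positive-marginals}. Also
\[
\iint f\le \lambda^2 e^{c}\norm K_2^2/2 ,
\]
because \(e^{-u}-1+u\le e^{c}u^2/2\) for \(u\ge -c\). A tree on \(n\) vertices thus contributes \(\lesssim\lambda^2\norm K_2^2 M^{n-2}\) with \(M\lesssim\lambda\). Summing over the \(n^{n-2}\) trees with the combinatorial factor \(\binom Nn\) gives
\[
\sum_n N^n n^{n-2}\lambda^n\norm K_2^2\,C^n/n!\lesssim a^2\norm K_2^2\sum_n (Cae)^{n-2},
\]
which converges when \(a\) is small.

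The non-tree terms come from graphs with an extra edge, handled by Lemma~\ref{lem:tree-unicyclic}(ii). Each such term has \(n\) edges and gives \(\lambda^n\norm K_2^2 M^{n-2}\) times the combinatorics, i.e. \(N^n\lambda^n\sim a^n\), again summable.

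The main obstacle is that \(M\) must be finite with \(a\) small relative only to \(\norm{K_-}_\infty\). Fixing this requires the smallness to be imposed through \(\norm{K_-}_\infty\) alone, while the \(L^1\)-type mixed norms of \(\abs f\) stay bounded by \(O(1)\) rather than \(O(\lambda)\). For that I would use \(\abs f\le e^{c}\) on \(\{K<0\}\) together with \(f\le0\) on \(\{K\ge0\}\). This yields \(\int\abs{f}\le 2e^{c}\int f_+ + \abs{\int f}\), and the cancellation controls it by \(\lambda\)-small quantities. Making this bookkeeping produce exactly \(Ca^2\norm K_2^2\) uniformly in \(N\) is the delicate step.
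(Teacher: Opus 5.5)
Your skeleton is the paper's. It consists of the Mayer expansion and the Penrose identity, with tree terms handled by Lemma~\ref{lem:tree-unicyclic}(i) (nonnegative marginals from \(e^{-u}-1\ge-u\), and \(\iint f\le e^{c}\lambda^2\norm{K}_{L^2(\mu^{\otimes2})}^2/2\)) and one-extra-edge terms by part~(ii). The factor \(\prod_{e\in R(T)}(1+f_e)\le e^{cn/2}\) is harmless, as you suspected, since \(\abs{R(T)}\le n(N-1)/2\); Cayley's formula and \(\binom Nn\le N^n/n!\) finish the counting.

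\textbf{First gap: the mixed-norm bound.} The estimate everything hinges on is left open. You need
\[
\ell_N:=\max\{\norm{f}_{L_x^\infty L_y^1(\mu)},\norm{f}_{L_y^\infty L_x^1(\mu)}\}\le C\lambda\norm{K_-}_{L^\infty(\mu^{\otimes2})}.
\]
Each of the \(n-2\) leaf integrations must cost \(O(a\norm{K_-}_\infty/N)\) to offset \(\binom Nn n^{n-2}\le(eN)^n/n^2\). An \(O(1)\) bound, which is what your last paragraph aims at, would leave a factor \(N^{n-2}\). Neither of your fallbacks supplies the right bound. First, \(\norm{K}_{L_x^\infty L_y^1(\mu)}<\infty\) is not among the hypotheses \eqref{eq:kernel-assumptions}. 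Second, \eqref{eq:canonical-smallness} involves only \(\norm{K_-}_{L^\infty(\mu^{\otimes2})}\), so bounding \(\ell_N\) through \(\norm{K}_{L_x^\infty L_y^1(\mu)}\) or a truncation level is useless unless that quantity is itself controlled by \(\norm{K_-}_{L^\infty(\mu^{\otimes2})}\). The missing line is that the cancellation gives exactly this control:
\[
\int\abs{K(x,y)}\,\mu(\dd y)=2\int K_-(x,y)\,\mu(\dd y)\le2\norm{K_-}_{L^\infty(\mu^{\otimes2})},
\]
together with \(\abs f\le e^{c}\lambda\abs K\). Your own splitting also closes in one step. Since \(\int f(x,y)\,\mu(\dd y)\ge0\), you get \(\int\abs{f(x,y)}\,\mu(\dd y)\le2\int f_+(x,y)\,\mu(\dd y)\le2e^{c}\lambda\norm{K_-}_{L^\infty(\mu^{\otimes2})}\).

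\textbf{Second gap: from connected weights to the partition function.} You do not need a convergence criterion for \(\log\E e^{-a\cU_{N,\mu}(K)}\), and the one you write does not fit. It freezes \(x_1\) and carries the per-particle weight \(N^{n-1}/(n-1)!\), so at best it bounds the logarithm by \(N\) times that sum. Moreover, with \(x_1\) frozen, Lemma~\ref{lem:tree-unicyclic}(i) is unavailable, because the gain \(\iint f\lesssim\lambda^2\norm{K}_{L^2(\mu^{\otimes2})}^2\) appears only after every variable is integrated. Already for \(n=2\), \(\sup_{x_1}\int f(x_1,y)\,\mu(\dd y)\) is not controlled by \(\lambda^2\norm{K}_{L^2(\mu^{\otimes2})}^2\). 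The paper uses instead that only an upper bound is wanted. Let \(\zeta_N(S)\) be the fully integrated connected weight on \(S\). Grouping graphs by connected components gives \(\E e^{-a\cU_{N,\mu}(K)}=\sum_{\mathcal F}\prod_{S\in\mathcal F}\zeta_N(S)\) over families \(\mathcal F\) of disjoint sets. Hence
\[
\E e^{-a\cU_{N,\mu}(K)}\le\prod_{\abs S\ge2}(1+\abs{\zeta_N(S)})\le\exp\sum_{\abs S\ge2}\abs{\zeta_N(S)}.
\]
Your fully integrated tree and unicyclic counts with the factor \(\binom Nn\) are exactly a bound on \(\sum_S\abs{\zeta_N(S)}\). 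With this product bound and the estimate on \(\ell_N\), the proof closes without any convergence criterion.
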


\begin{proof}
The cancellations in \eqref{eq:canonical-assumptions} imply
\(\E\,\cU_{N,\mu}(K)=0\).  By Jensen's inequality, we obtain the lower bound in
\eqref{eq:canonical-bound}.  It remains to prove the upper bound.

We first write the partition function as a finite product and expand it into
graphs.  After the graphs are grouped by their connected
components, the Penrose identity reduces the proof to the two estimates in
Lemma~\ref{lem:tree-unicyclic}.

Fix \(N\ge2\) and define
\[
 f_N(x,y)=\exp\left(-\frac{a}{N-1}K(x,y)\right)-1.
\]
Then
\begin{equation}\label{eq:pair-product}
 e^{-a\cU_{N,\mu}(K;X^{1:N})}
 =\prod_{1\le i<j\le N}\bigl(1+f_N(X_i,X_j)\bigr).
\end{equation}
The pointwise bound
\[
 -1\le f_N\le
 e^{a\norm{K_-}_{L^\infty(\mu^{\otimes2})}/(N-1)}-1
\]
shows that \(f_N\) is bounded.  Thus every term in the finite expansion of
\eqref{eq:pair-product} is integrable, and termwise integration gives
\begin{equation}\label{eq:graph-expansion}
 \E e^{-a\cU_{N,\mu}(K)}
 =\sum_{\substack{G\text{ a graph}\\V(G)=\{1,\ldots,N\}}}
   \int_{E^N}
   \prod_{\substack{\{i,j\}\in E(G)\\i<j}}f_N(x_i,x_j)
   \,\mu^{\otimes N}(\dd X^{1:N}).
\end{equation}

We now group the graphs in \eqref{eq:graph-expansion} by their connected
components.  For \(S\subset\{1,\ldots,N\}\) with \(\abs S\ge2\), set
\[
 \zeta_N(S)=
 \sum_{\substack{G\text{ connected}\\V(G)=S}}
 \int_{E^S}\prod_{\substack{\{i,j\}\in E(G)\\i<j}}f_N(x_i,x_j)
 \,\mu^{\otimes S}(\dd X_S).
\]
Every graph in \eqref{eq:graph-expansion} has a unique collection
\(\mathcal F\) of connected components containing at least one edge.  The
sets in \(\mathcal F\) are pairwise disjoint, no edge joins two different
sets, and the remaining vertices are isolated.  Hence Fubini's theorem expresses
its integral as the product of the integrals associated with the sets in
\(\mathcal F\); an isolated vertex contributes \(1\).  Conversely, choosing
such a family \(\mathcal F\) and a connected graph on every \(S\in\mathcal F\)
determines exactly one graph in \eqref{eq:graph-expansion}.  Therefore
\begin{equation}\label{eq:connected-expansion}
 \E e^{-a\cU_{N,\mu}(K)}
 =\sum_{\mathcal F}\prod_{S\in\mathcal F}\zeta_N(S).
\end{equation}
Here \(\mathcal F\) ranges over all families of pairwise disjoint subsets of
\(\{1,\ldots,N\}\) of size at least two, including the empty family.  This is
the standard connected-component decomposition; see
\cite[proof of Theorem~4.12, Equation~(4.12)]{Jansen2018}.

After taking absolute values in \eqref{eq:connected-expansion} and dropping
the disjointness restriction, we obtain the product over all subsets \(S\).  Using
\(1+t\le e^t\), we obtain
\begin{equation}\label{eq:connected-upper-bound}
 \E e^{-a\cU_{N,\mu}(K)}
 \le\prod_{\abs S\ge2}(1+\abs{\zeta_N(S)})
 \le\exp\left(\sum_{\abs S\ge2}\abs{\zeta_N(S)}\right).
\end{equation}
It remains to prove
\begin{equation}\label{eq:zeta-target}
 \sum_{\abs S\ge2}\abs{\zeta_N(S)}
 \le Ca^2\norm{K}_{L^2(\mu^{\otimes2})}^2.
\end{equation}

Fix \(S\) with \(\abs S=n\).  For \(e=\{i,j\}\) with \(i<j\), write
\(f_e=f_N(x_i,x_j)\).  By the Penrose identity \eqref{eq:penrose},
\begin{equation}\label{eq:zeta-penrose}
 \zeta_N(S)
 =\sum_{T\in\mathcal T(S)}\int
 \left(\prod_{e\in E(T)}f_e\right)
 \left(\prod_{e\in R(T)}(1+f_e)\right)
 \mu^{\otimes S}(\dd X_S).
\end{equation}
Writing
\(\prod_{e\in R(T)}(1+f_e)
=1+[\prod_{e\in R(T)}(1+f_e)-1]\) in
\eqref{eq:zeta-penrose}, we obtain
\begin{align}
 \zeta_N(S)
 &=\sum_{T\in\mathcal T(S)}
   \int\prod_{e\in E(T)}f_e\,\mu^{\otimes S}(\dd X_S)\notag\\
 &\quad+\sum_{T\in\mathcal T(S)}\int
   \left(\prod_{e\in E(T)}f_e\right)
   \left[\prod_{e\in R(T)}(1+f_e)-1\right]
   \mu^{\otimes S}(\dd X_S).
 \label{eq:zeta-two-term-split}
\end{align}
The first sum in \eqref{eq:zeta-two-term-split} contains one integral for each
tree.  In the second sum, we expand the expression in brackets and select one
additional edge.  The selected edge and the unique path between its endpoints
in the tree form exactly one cycle.  We estimate the two sums in this order,
using parts~(i) and~(ii) of Lemma~\ref{lem:tree-unicyclic}, respectively.

Both estimates use
\[
 \ell_N:=\max\left\{
 \norm{f_N}_{L_x^\infty L_y^1(\mu)},
 \norm{f_N}_{L_y^\infty L_x^1(\mu)}
 \right\}.
\]
By the first cancellation in
\eqref{eq:canonical-assumptions}, for \(\mu\)-almost every \(x\), we obtain
\[
 \int_E|K(x,y)|\,\mu(\dd y)
 =2\int_EK_-(x,y)\,\mu(\dd y)
 \le2\norm{K_-}_{L^\infty(\mu^{\otimes2})}.
\]
By the second cancellation, we obtain the same estimate after interchanging \(x\)
and \(y\).  Therefore
\begin{equation}\label{eq:K-marginal-L1}
 \max\left\{
  \norm{K}_{L_x^\infty L_y^1(\mu)},
  \norm{K}_{L_y^\infty L_x^1(\mu)}
 \right\}
 \le2\norm{K_-}_{L^\infty(\mu^{\otimes2})}.
\end{equation}
We shall also use the elementary inequalities
\begin{equation}\label{eq:elementary-exponential-bounds}
 \abs{e^{-u}-1}\le\abs u e^{u_-},
 \qquad
 0\le e^{-u}-1+u\le\frac12u^2e^{u_-},
 \qquad u\in\mathbb R.
\end{equation}
Choose \(c\le1\) in \eqref{eq:canonical-smallness}.  By the first inequality in
\eqref{eq:elementary-exponential-bounds} and
\eqref{eq:K-marginal-L1}, we obtain
\begin{equation}
 \ell_N
 \le\frac{Ca}{N}\norm{K_-}_{L^\infty(\mu^{\otimes2})}.
 \label{eq:ellN-bound}
\end{equation}

We begin with the first sum in \eqref{eq:zeta-two-term-split}.  To apply
Lemma~\ref{lem:tree-unicyclic}(i), we verify its two marginal conditions.  By
Jensen's inequality and the first cancellation in
\eqref{eq:canonical-assumptions}, for \(\mu\)-almost every \(x\),
\[
 \int_E f_N(x,y)\,\mu(\dd y)
 \ge \exp\left(-\frac{a}{N-1}
       \int_EK(x,y)\,\mu(\dd y)\right)-1=0.
\]
By the second cancellation, the same calculation gives
\[
 \int_E f_N(x,y)\,\mu(\dd x)\ge0
 \quad\text{for \(\mu\)-almost every \(y\)}.
\]
In particular, \(\iint f_N\dd\mu^{\otimes2}\ge0\).  Moreover,
\eqref{eq:canonical-assumptions} implies
\(\iint K\dd\mu^{\otimes2}=0\).
Set \(u=aK/(N-1)\).  By the second inequality in
\eqref{eq:elementary-exponential-bounds},
\begin{equation}\label{eq:fN-mean-bound}
 0\le\iint_{E^2}f_N\dd\mu^{\otimes2}
 =\iint_{E^2}\bigl(e^{-u}-1+u\bigr)\dd\mu^{\otimes2}
 \le\frac{Ca^2}{N^2}\norm{K}_{L^2(\mu^{\otimes2})}^2.
\end{equation}

Applying Lemma~\ref{lem:tree-unicyclic}(i) to \(F=f_N\), we obtain, for every
tree \(T\in\mathcal T(S)\),
\begin{equation}\label{eq:tree-integral-bound}
 \left|\int_{E^S}\prod_{e\in E(T)}f_e\,
 \mu^{\otimes S}(\dd X_S)\right|
 \le\left(\iint_{E^2}f_N(x,y)\,\mu(\dd x)\mu(\dd y)\right)
 \ell_N^{n-2}.
\end{equation}

We next estimate the second sum in \eqref{eq:zeta-two-term-split}.  Part~(ii)
of Lemma~\ref{lem:tree-unicyclic} also requires an \(L^2\) bound.
By the first inequality in \eqref{eq:elementary-exponential-bounds},
\begin{equation}
 \norm{f_N}_{L^2(\mu^{\otimes2})}^2
 \le\frac{Ca^2}{N^2}\norm{K}_{L^2(\mu^{\otimes2})}^2.
 \label{eq:fN-L2-bound}
\end{equation}
Fix an ordering of \(R(T)\).  Then
\begin{equation}\label{eq:tree-remainder-expansion}
 \prod_{e\in R(T)}(1+f_e)-1
 =\sum_{e'\in R(T)}f_{e'}
   \prod_{\substack{e\in R(T)\\e<e'}}(1+f_e).
\end{equation}
By the smallness condition
\eqref{eq:canonical-smallness} and
\(\abs{R(T)}\le\binom n2\le n(N-1)/2\), we obtain
\[
 \frac{a\norm{K_-}_{L^\infty(\mu^{\otimes2})}\abs{R(T)}}{N-1}
 \le \frac{a\norm{K_-}_{L^\infty(\mu^{\otimes2})}n}{2}
 \le \frac n2.
\]
By this bound, the identity
\(1+f_e=\exp\bigl(-aK(x_i,x_j)/(N-1)\bigr)>0\), and \(-K\le K_-\),
we obtain
\begin{equation}\label{eq:tree-positive-factor-bound}
 \prod_{\substack{e\in R(T)\\e<e'}}(1+f_e)
 \le \exp\left(
  \frac{a\norm{K_-}_{L^\infty(\mu^{\otimes2})}\abs{R(T)}}{N-1}
  \right)
 \le C^n.
\end{equation}
A tree has a unique path between the endpoints of \(e'\).  This path together
with \(e'\) forms exactly one cycle, and all other tree edges lie outside that
cycle.  Hence the graph with edge set \(E(T)\cup\{e'\}\) satisfies the
hypothesis of Lemma~\ref{lem:tree-unicyclic}(ii).  Applying that lemma to
\(F=f_N\) and using \eqref{eq:tree-positive-factor-bound}, we obtain
\begin{align}
 \left|\int
 \left(\prod_{e\in E(T)}f_e\right)f_{e'}
 \prod_{\substack{e\in R(T)\\e<e'}}(1+f_e)
 \mu^{\otimes S}(\dd X_S)\right|
 &\le C^n\int
 \left(\prod_{e\in E(T)}\abs{f_e}\right)\abs{f_{e'}}
 \mu^{\otimes S}(\dd X_S)\notag\\
 &\le C^n\norm{f_N}_{L^2(\mu^{\otimes2})}^2\ell_N^{n-2}.
 \label{eq:extra-edge-bound}
\end{align}
For each fixed \(T\), there are at most \(n^2/2\) terms in
\eqref{eq:tree-remainder-expansion}, and there are
\(n^{n-2}\) trees on \(S\) by Cayley's formula.  Combining
\eqref{eq:zeta-two-term-split}, \eqref{eq:tree-integral-bound},
\eqref{eq:extra-edge-bound}, and the estimates
\eqref{eq:ellN-bound}, \eqref{eq:fN-mean-bound}, and
\eqref{eq:fN-L2-bound}, we obtain
\begin{equation}\label{eq:zeta-fixed-set-bound}
 \abs{\zeta_N(S)}
 \le \frac{C^n n^n}{N^n}
 a^2\norm{K}_{L^2(\mu^{\otimes2})}^2
 \left(a\norm{K_-}_{L^\infty(\mu^{\otimes2})}\right)^{n-2}.
\end{equation}
When \(n=2\), the second sum in \eqref{eq:zeta-two-term-split} is zero,
and the same estimate follows from \eqref{eq:tree-integral-bound}.

There are \(\binom{N}{n}\) choices of \(S\).  Since
\(\binom{N}{n}\le N^n/n!\) and \(n!\ge(n/e)^n\),
we deduce from \eqref{eq:zeta-fixed-set-bound} that
\begin{equation}\label{eq:zeta-size-bound}
 \sum_{\abs S=n}\abs{\zeta_N(S)}
 \le Ca^2\norm{K}_{L^2(\mu^{\otimes2})}^2
 \left(Ca\norm{K_-}_{L^\infty(\mu^{\otimes2})}\right)^{n-2}.
\end{equation}
Choose \(c>0\) sufficiently small so that \(Cc\le1/2\).  We then deduce from
\eqref{eq:zeta-size-bound} that
\[
 \sum_{\abs S=n}\abs{\zeta_N(S)}
 \le Ca^2\norm{K}_{L^2(\mu^{\otimes2})}^2 2^{-(n-2)},
 \qquad 2\le n\le N.
\]
Summing over \(n\) proves \eqref{eq:zeta-target}.  Substitution into
\eqref{eq:connected-upper-bound} proves the upper bound in
\eqref{eq:canonical-bound}.
\end{proof}

\subsection{Heat kernel decomposition}
\label{subsec:heat-decomposition}

Let \(p_t(x)=(4\pi t)^{-d/2}e^{-\abs{x}^2/(4t)}\) be the heat kernel.  Fix
\(g=g_{d,s}^{(m)}\), where \(0\le s<d/2\), \(m\ge0\), and \(m=0\) if
\(s=0\).  For \(0<\tau\le1\), we split its heat representation at
\(t=\tau\) by setting
\begin{equation}\label{eq:heat-decomposition}
 r_\tau(x)
 :=\frac1{\Gamma((d-s)/2)}
 \int_0^\tau t^{(d-s)/2-1}e^{-m^2t}p_t(x)\dd t,
 \qquad
 g_\tau:=g-r_\tau.
\end{equation}
Here \(\Gamma\) denotes the gamma function.
Thus \(g=g_\tau+r_\tau\): \(r_\tau\) is the short-range part, retaining the
singularity of \(g\) but decaying rapidly beyond the scale \(\sqrt{\tau}\),
while \(g_\tau\) is regular at the origin.
We first establish the properties of the decomposition
\eqref{eq:heat-decomposition} in the two cases.

\medskip
\noindent\textit{The logarithmic case \(s=m=0\).}\par
\medskip

In this case, the decomposition \eqref{eq:heat-decomposition} takes the form
\begin{equation}\label{eq:log-split}
 r_\tau(x)=\frac1{\Gamma(d/2)}
 \int_0^\tau t^{d/2-1}p_t(x)\dd t,
 \qquad
 g_\tau=g_{d,0}-r_\tau.
\end{equation}

\begin{lemma}
\label{lem:log-coarse-positive}
Assume that \(s=m=0\).  Let \(\tau>0\), and let \(\nu\) be a finite signed
measure satisfying
\[
 \nu(\mathbb R^d)=0,
 \qquad
 \int\log(2+\abs{x})\,\abs{\nu}(\dd x)<\infty.
\]
Then
\begin{equation}\label{eq:log-coarse-heat}
 \iint g_\tau(x-y)\,\nu(\dd x)\nu(\dd y)
 =\frac1{\Gamma(d/2)}\int_\tau^\infty
 t^{d/2-1}\norm{p_{t/2}*\nu}_2^2\dd t.
\end{equation}
Consequently, for the density \(\rho\) in
Assumption~\ref{ass:kernel-density}, \((g_\tau)_\rho^\circ\) is positive
semidefinite.
\end{lemma}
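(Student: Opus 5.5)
We first identify \(g_\tau\) as a heat-kernel integral over \([\tau,\infty)\), then compute the quadratic form against the neutral measure \(\nu\) via Fubini and the semigroup property. The positive semidefiniteness of the centred kernel then follows by choosing \(\nu\) suitably.

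\emph{Step 1: the heat representation of \(g_\tau\).} Since \(\widehat{g_{d,0}}(\xi)=|\xi|^{-d}\), the heat representation \(g=\Gamma(d/2)^{-1}\int_0^\infty t^{d/2-1}p_t\dd t\) diverges at large \(t\) when \(s=0\). We therefore use a renormalized version. For \(x\ne0\) we expect
\[
 g_\tau(x)=\frac1{\Gamma(d/2)}\int_\tau^\infty t^{d/2-1}\bigl(p_t(x)-p_t(0)\bigr)\dd t+c_\tau
\]
for some constant \(c_\tau\). To check this, note that both sides are radial. Differentiating \(\int_0^\infty t^{d/2-1}(p_t(x)-p_t(0))\dd t\) in \(|x|\) gives \(-(4\pi)^{-d/2}\,2/|x|\) times a Gamma factor. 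Matching this with the derivative of \(g_{d,0}\) shows that the integral equals \(g_{d,0}\) up to an additive constant. The integral over \((0,\tau)\) of \(t^{d/2-1}p_t(0)\) converges, so it is absorbed into \(c_\tau\). The integrand over \([\tau,\infty)\) is \(O(t^{-2}|x|^2)\), which gives absolute convergence and the growth bound \(|g_\tau(x)|\lesssim_\tau\log(2+|x|)\).

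\emph{Step 2: the quadratic form.} Integrate the representation of Step 1 against \(\nu\otimes\nu\). The constants \(c_\tau\) and \(-p_t(0)\) drop out because \(\nu(\mathbb R^d)=0\). By the semigroup property and symmetry of \(p_t\), we have \(\iint p_t(x-y)\nu(\dd x)\nu(\dd y)=\norm{p_{t/2}*\nu}_2^2\), which yields \eqref{eq:log-coarse-heat}.

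The main obstacle is justifying the Fubini interchange. We bound \(|p_t(x-y)-p_t(0)|\le C\min\{t^{-d/2},\,t^{-d/2-1}|x-y|^2\}\). Integrated against \(t^{d/2-1}\) over \([\tau,\infty)\), this is \(\lesssim\log(2+|x-y|)+C_\tau\). Since \(\log(2+|x-y|)\le\log(2+|x|)+\log(2+|y|)\), the assumed log moment of \(|\nu|\) makes everything absolutely integrable. If that crude bound proved insufficient, a fallback is to truncate the \(t\)-integral at \(T\), apply Fubini on \([\tau,T]\), and let \(T\to\infty\) using monotone convergence on the right-hand side, since the integrand \(t^{d/2-1}\norm{p_{t/2}*\nu}_2^2\) is nonnegative.

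\emph{Step 3: positive semidefiniteness.} Let \(f\) be a bounded function, and take \(\nu=f\rho\dd x-(\int f\rho)\rho\dd x\). Then \(\nu\) has total mass zero. The tail assumption \eqref{eq:log-tail-assumption} gives the log moment, since \(\log(2+|x|)\lesssim1+e^{a_\rho|x|^{q_\rho}}\). Expanding the centering shows \(\iint(g_\tau)_\rho^\circ f\otimes f\,\rho\otimes\rho=\iint g_\tau(x-y)\nu(\dd x)\nu(\dd y)\ge0\) by Step 2. Finally, extend from bounded \(f\) to all of \(L^2(\rho)\) by density, using that \((g_\tau)_\rho^\circ\in L^2(\rho^{\otimes2})\). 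This holds because \(g_\tau\) is continuous with logarithmic growth, and \(\rho\) has all log moments.
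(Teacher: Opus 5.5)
Your route differs from the paper's. The paper works in Fourier variables. It writes \(\widehat{g_\tau}(\xi)=\Gamma(d/2)^{-1}\int_\tau^\infty t^{d/2-1}e^{-t|\xi|^2}\dd t\) and checks \(\widehat{g_\tau}|\widehat\nu|^2\in L^1\), where the \(|\xi|^{-d}\) singularity is offset by \(\widehat\nu(0)=0\) and the log moment. It then applies Plancherel to \(p_\varepsilon*\nu\), lets \(\varepsilon\downarrow0\), and finishes with Fubini in \((t,\xi)\). You stay in physical space, using the renormalized kernel \(p_t(z)-p_t(0)\) and the semigroup identity. Your Step~2 is correct and arguably more transparent. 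The bound \(\int_\tau^\infty t^{d/2-1}|p_t(z)-p_t(0)|\dd t\le C\bigl(1+\log_+(|z|^2/\tau)\bigr)\), together with \(\log(2+|x-y|)\le\log(2+|x|)+\log(2+|y|)\), gives a complete Fubini justification in which the log moment enters visibly. By contrast, the paper only sketches the passage to the Fourier side for a logarithmically growing kernel. Two points in your write-up need repair.

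First, Step~1 as written is false. Since \(t^{d/2-1}p_t(0)=(4\pi)^{-d/2}t^{-1}\), one has \(\int_0^\tau t^{d/2-1}p_t(0)\dd t=+\infty\). Correspondingly, \(\int_0^\infty t^{d/2-1}\bigl(p_t(x)-p_t(0)\bigr)\dd t=-\infty\) for every \(x\ne0\), so there is no integral that "equals \(g_{d,0}\) up to a constant". The representation itself is true, but prove it without the full-range integral. For \(x\ne0\), differentiate \(r_\tau\) and \(h_\tau(x):=\Gamma(d/2)^{-1}\int_\tau^\infty t^{d/2-1}\bigl(p_t(x)-p_t(0)\bigr)\dd t\) separately; both integrals and their \(|x|\)-derivatives converge absolutely. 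Add the results and use your computation \(\int_0^\infty t^{d/2-1}\partial_{|x|}p_t(x)\dd t=-2(4\pi)^{-d/2}/|x|\). This gives \(\partial_{|x|}(r_\tau+h_\tau)=\partial_{|x|}g_{d,0}\), so \(g_\tau-h_\tau\) equals a constant \(c_\tau\) on \(\mathbb R^d\setminus\{0\}\). Explicitly, \(c_\tau=(4\pi)^{-d/2}\Gamma(d/2)^{-1}(\gamma-\log(4\tau))\), with \(\gamma\) Euler's constant. Setting \(g_\tau(0):=c_\tau\) gives the continuous extension needed on the diagonal when \(\nu\) has atoms.

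Second, Step~3 proves only that the integral operator with kernel \((g_\tau)_\rho^\circ\) is nonnegative on \(L^2(\rho)\). The lemma asserts, and the paper later uses, positive semidefiniteness in the Gram-matrix sense: \(\sum_{i,j}a_ia_j(g_\tau)_\rho^\circ(y_i,y_j)\ge0\) for all finite families of points \(y_i\in\mathbb R^d\) and real \(a_i\). This is what gives \(d_\tau(x)\ge0\) for every \(x\), and \(\sum_{i,j}(g_\tau)_\rho^\circ(X_i,X_j)\ge0\) for every configuration in the proof of Proposition~\ref{prop:coarse-fine-estimate}. The operator statement does not yield this; at best, continuity gives it at points of the support of \(\rho\). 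The remedy is already in your Step~2, which allows atoms. Take \(\nu=\sum_ia_i\delta_{y_i}-\bigl(\sum_ia_i\bigr)\rho\dd x\), which has zero mass and a log moment by \eqref{eq:log-tail-assumption}. Expanding the centering turns \(\iint g_\tau(x-y)\,\nu(\dd x)\nu(\dd y)\) into exactly the Gram sum; this is the paper's argument. The \(L^2(\rho)\) statement then follows directly from Step~2 with \(\nu=(f-\langle\rho,f\rangle)\rho\dd x\) for \(f\in L^2(\rho)\), since Cauchy--Schwarz supplies the log moment. Your density extension is therefore unnecessary.
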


\begin{proof}
By the normalization of \(g_{d,0}\) and the definition of \(r_\tau\) in
\eqref{eq:log-split}, for \(\xi\ne0\),
\[
 \widehat{r_\tau}(\xi)
 =\frac1{\Gamma(d/2)}
   \int_0^\tau t^{d/2-1}e^{-t\abs{\xi}^2}\dd t,
 \qquad
 \widehat{g_\tau}(\xi)
 =\frac1{\Gamma(d/2)}
   \int_\tau^\infty t^{d/2-1}e^{-t\abs{\xi}^2}\dd t.
\]
The assumptions on \(\nu\) imply that
\(\widehat{g_\tau}\abs{\widehat\nu}^2\in L^1(\mathbb R^d)\).

Since \(\nu(\mathbb R^d)=0\), adding a constant to \(g_\tau\) does not change
the integral.  By applying Plancherel's theorem and Fubini's theorem first to
\(p_\varepsilon*\nu\) and then letting \(\varepsilon\downarrow0\), we obtain
\[
 \begin{aligned}
 \iint g_\tau(x-y)\,\nu(\dd x)\nu(\dd y)
 &=(2\pi)^{-d}\int_{\mathbb R^d}
   \widehat{g_\tau}(\xi)\abs{\widehat\nu(\xi)}^2\dd\xi\\
 &=\frac{(2\pi)^{-d}}{\Gamma(d/2)}
   \int_{\mathbb R^d}\int_\tau^\infty
   t^{d/2-1}e^{-t\abs{\xi}^2}\abs{\widehat\nu(\xi)}^2
   \dd t\dd\xi\\
 &=\frac1{\Gamma(d/2)}\int_\tau^\infty
   t^{d/2-1}\norm{p_{t/2}*\nu}_2^2\dd t\ge0.
 \end{aligned}
\]
This proves \eqref{eq:log-coarse-heat}.

For \(a_1,\ldots,a_\ell\in\mathbb R\) and
\(y_1,\ldots,y_\ell\in\mathbb R^d\), set
\(\nu=\sum_i a_i\delta_{y_i}-(\sum_i a_i)\rho(x)\dd x\).
By assumption \eqref{eq:log-tail-assumption}, this measure satisfies the
assumptions in the statement, and
\[
 \sum_{i,j=1}^{\ell}a_ia_j(g_\tau)_\rho^\circ(y_i,y_j)
 =\iint g_\tau(x-y)\,\nu(\dd x)\nu(\dd y)\ge0.
\]
Thus \((g_\tau)_\rho^\circ\) is positive semidefinite.
\end{proof}

For the density \(\rho\) in Assumption~\ref{ass:kernel-density}, set
\[
 d_\tau(x):=(g_\tau)_\rho^\circ(x,x).
\]
By Lemma~\ref{lem:log-coarse-positive}, \((g_\tau)_\rho^\circ\) is positive
semidefinite, so \(d_\tau\ge0\).  Since \(g_{d,0}=g_\tau+r_\tau\), we have
\begin{equation}\label{eq:log-diagonal-expansion}
 d_\tau(x)
 =g_\tau(0)-2(g_{d,0}*\rho)(x)+2(r_\tau*\rho)(x)
  +\int(g_{d,0}*\rho)\rho-\int(r_\tau*\rho)\rho.
\end{equation}
We estimate the five terms in \eqref{eq:log-diagonal-expansion}.

For the first term, comparing the cutoffs \(\tau\) and \(1\) in
\eqref{eq:log-split}, we obtain
\begin{equation}\label{eq:log-coarse-origin}
 g_\tau(0)
 \le C_d+\frac1{\Gamma(d/2)}
   \int_\tau^1 t^{d/2-1}p_t(0)\dd t
 =C_d+\frac{(4\pi)^{-d/2}}{\Gamma(d/2)}\log(1/\tau)
 \le C_d\left(1+\log(1/\tau)\right).
\end{equation}
For the second and fourth terms, by the boundedness of \(\rho\), the
logarithmic moment condition, and
\(\abs{x-y}\le(1+\abs{x})(1+\abs{y})\), we obtain
\begin{equation}\label{eq:log-potential-bounds}
 -2(g_{d,0}*\rho)(x)
 \le C_{d,\rho}\left(1+\log(1+\abs{x})\right),
 \qquad
 \left|\int(g_{d,0}*\rho)\rho\right|\le C_{d,\rho}.
\end{equation}
For the third and fifth terms, by \eqref{eq:log-split}, \(\int p_t=1\), and
the scaling \(r_\tau(\sqrt{\tau}z)=r_1(z)\), we obtain
\begin{equation}\label{eq:log-remainder-scales}
 \norm{r_\tau}_1+\norm{r_\tau}_2^2
 =\tau^{d/2}\left(\norm{r_1}_1+\norm{r_1}_2^2\right)
 \le C_d\tau^{d/2}.
\end{equation}
Furthermore, since \(r_\tau\) is nonnegative and \(\rho\) is bounded,
\begin{equation}\label{eq:log-remainder-bounds}
 0\le2(r_\tau*\rho)(x)\le C_{d,\rho}\tau^{d/2},
 \qquad
 -\int(r_\tau*\rho)\rho\le0.
\end{equation}
Combining \eqref{eq:log-diagonal-expansion},
\eqref{eq:log-coarse-origin}, \eqref{eq:log-potential-bounds}, and
\eqref{eq:log-remainder-bounds}, we obtain, for every \(x\in\mathbb R^d\),
\begin{equation}\label{eq:log-diagonal-pointwise}
 0\le d_\tau(x)
 \le C_{d,\rho}
 \left(1+\log(1/\tau)+\log(1+\abs{x})\right).
\end{equation}

To control the contribution of \(g_\tau\) to the partition function, we need
a bound on the exponential moment of \(d_\tau\), uniformly in \(N\).  The
tail assumption \eqref{eq:log-tail-assumption} implies
\begin{equation}\label{eq:log-polynomial-moment}
 \log\int_{\mathbb R^d}(1+\abs{x})^\lambda\rho(x)\dd x
 \le C_{d,\rho}\lambda\log(2+\lambda),
 \qquad \lambda\ge0.
\end{equation}

By \eqref{eq:log-diagonal-pointwise},
\begin{equation}\label{eq:log-diagonal-exponential-reduction}
 \int_{\mathbb R^d}e^{\beta d_\tau(x)/(N-1)}\rho(x)\dd x
 \le
 \exp\left(\frac{C_{d,\rho}\beta}{N-1}
 \bigl(1+\log(1/\tau)\bigr)\right)
 \int_{\mathbb R^d}
 (1+\abs{x})^{C_{d,\rho}\beta/(N-1)}\rho(x)\dd x.
\end{equation}
Applying \eqref{eq:log-polynomial-moment} with
\(\lambda=C_{d,\rho}\beta/(N-1)\) to
\eqref{eq:log-diagonal-exponential-reduction}, and using
\(N/(N-1)\le2\), we obtain
\begin{equation}\label{eq:log-diagonal-cost}
 \sup_{N\ge2}\frac N2
 \log\int_{\mathbb R^d}
 \exp\left(\frac{\beta}{N-1}d_\tau(x)\right)\rho(x)\dd x
 \le C_{d,\rho}\beta
 \left(1+\log(1/\tau)+\log(2+\beta)\right).
\end{equation}
The constant \(C_{d,\rho}\) in \eqref{eq:log-diagonal-cost} depends only on
\(d\), \(\norm{\rho}_\infty\), and the parameters and moment bound in
\eqref{eq:log-tail-assumption}.

\medskip
\noindent\textit{The case \(0<s<d/2\) and \(m\ge0\).}\par
\medskip

By the gamma integral formula, the Riesz and Bessel--Riesz kernels have the
heat representation
\begin{equation}\label{eq:riesz-split}
 g_{d,s}^{(m)}
 =\frac1{\Gamma((d-s)/2)}\int_0^\infty
 t^{(d-s)/2-1}e^{-m^2t}p_t\dd t.
\end{equation}
Indeed, the Fourier multiplier of the right-hand side is
\[
 \frac1{\Gamma((d-s)/2)}\int_0^\infty
 t^{(d-s)/2-1}e^{-t(m^2+\abs{\xi}^2)}\dd t
 =(m^2+\abs{\xi}^2)^{-(d-s)/2}.
\]
Thus \(r_\tau\) and \(g_\tau\)
in the heat kernel decomposition
\eqref{eq:heat-decomposition} are,
respectively, the integrals over \((0,\tau)\) and \((\tau,\infty)\).

We first note that
\begin{equation}\label{eq:riesz-remainder-l1}
 r_\tau\ge0,
 \qquad
 \norm{r_\tau}_1
 \le\frac1{\Gamma((d-s)/2)}
 \int_0^\tau t^{(d-s)/2-1}\dd t
 \le C_{d,s}\tau^{(d-s)/2}.
\end{equation}
By Minkowski's inequality and \(\norm{p_t}_2=C_dt^{-d/4}\), we obtain
\begin{equation}\label{eq:riesz-remainder-l2}
 \norm{r_\tau}_2^2
 \le C_{d,s}\left(
 \int_0^\tau t^{(d-2s)/4-1}\dd t
 \right)^2
 \le C_{d,s}\tau^{d/2-s}.
\end{equation}
We deduce from \eqref{eq:riesz-split} that
\begin{equation}\label{eq:riesz-coarse-origin}
 \begin{aligned}
 g_\tau(0)
 &=\frac{(4\pi)^{-d/2}}{\Gamma((d-s)/2)}
 \int_\tau^\infty t^{-s/2-1}e^{-m^2t}\dd t\\
 &\le \frac{(4\pi)^{-d/2}}{\Gamma((d-s)/2)}
 \int_\tau^\infty t^{-s/2-1}\dd t
 =\frac{2(4\pi)^{-d/2}}{s\Gamma((d-s)/2)}\tau^{-s/2}
 \le C_{d,s}\tau^{-s/2}.
 \end{aligned}
\end{equation}

\begin{lemma}
For every finite signed measure \(\nu\) and every \(\tau\in(0,1]\),
\begin{equation}\label{eq:riesz-coarse-positive}
 \iint g_\tau(x-y)\,\nu(\dd x)\nu(\dd y)
 =\frac1{\Gamma((d-s)/2)}\int_\tau^\infty
 t^{(d-s)/2-1}e^{-m^2t}\norm{p_{t/2}*\nu}_2^2\dd t
 \ge0.
\end{equation}
Consequently, \((g_\tau)_\rho^\circ\) is positive semidefinite, and its
diagonal \(d_\tau(x):=(g_\tau)_\rho^\circ(x,x)\) satisfies
\begin{equation}\label{eq:riesz-coarse-diagonal}
 0\le d_\tau(x)\le2g_\tau(0)\le C_{d,s}\tau^{-s/2},
 \qquad x\in\mathbb R^d.
\end{equation}
\end{lemma}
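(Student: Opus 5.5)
The identity \eqref{eq:riesz-coarse-positive} follows the route of Lemma~\ref{lem:log-coarse-positive}, and it is simpler here because \(g_\tau\) is bounded and continuous. Indeed, \(0\le g_\tau(x)\le g_\tau(0)<\infty\) by \eqref{eq:riesz-coarse-origin}, and since \(p_t(x)\le p_t(0)\), the heat integral over \((\tau,\infty)\) converges uniformly in \(x\) when \(s>0\). Its Fourier transform
\[
 \widehat{g_\tau}(\xi)=\frac1{\Gamma((d-s)/2)}\int_\tau^\infty t^{(d-s)/2-1}e^{-t(m^2+\abs{\xi}^2)}\dd t
\]
is nonnegative. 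It is integrable, because for \(\abs\xi\) large it decays like \(e^{-\tau\abs\xi^2}\), and near \(\xi=0\) it is at most \(\Gamma((d-s)/2)^{-1}\int_\tau^\infty t^{(d-s)/2-1}e^{-t\abs\xi^2}\dd t\lesssim\abs\xi^{-(d-s)}\), which is locally integrable.

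To prove the identity I will use Fubini directly and avoid Plancherel for measures. Write \(g_\tau(x-y)=\Gamma((d-s)/2)^{-1}\int_\tau^\infty t^{(d-s)/2-1}e^{-m^2t}p_t(x-y)\dd t\) and substitute the semigroup identity \(p_t(x-y)=\int p_{t/2}(x-z)p_{t/2}(z-y)\dd z\). This gives
\[
 \iint p_t(x-y)\,\nu(\dd x)\nu(\dd y)=\norm{p_{t/2}*\nu}_2^2 .
\]
Fubini needs absolute integrability in \((t,z,x,y)\). The integrand is nonnegative when \(\nu\) is replaced by \(\abs\nu\), and the resulting integral equals \(\iint g_\tau(x-y)\abs\nu(\dd x)\abs\nu(\dd y)\le g_\tau(0)\norm{\nu}_{\TV}^2<\infty\). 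So exchanging the \(t\)-, \(z\)- and \((x,y)\)-integrations is legitimate, and \eqref{eq:riesz-coarse-positive} follows with a nonnegative right-hand side. Unlike the logarithmic case, no condition on \(\nu(\mathbb R^d)\) and no moment condition is needed.

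Positive semidefiniteness of \((g_\tau)_\rho^\circ\) then follows exactly as before: take \(\nu=\sum_i a_i\delta_{y_i}-(\sum_ia_i)\rho\dd x\), a finite signed measure, expand the centered kernel, and observe that \(\sum_{i,j}a_ia_j(g_\tau)_\rho^\circ(y_i,y_j)=\iint g_\tau\,\dd\nu\,\dd\nu\ge0\). In particular \(d_\tau\ge0\).

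For the upper bound on the diagonal, expand
\[
 d_\tau(x)=g_\tau(0)-2(g_\tau*\rho)(x)+\iint g_\tau(u-v)\rho(u)\rho(v)\dd u\dd v .
\]
Since \(g_\tau\ge0\), the middle term is nonpositive. The last term is at most \(g_\tau(0)\), because \(g_\tau\le g_\tau(0)\) and \(\rho\) is a probability density. Hence \(d_\tau(x)\le 2g_\tau(0)\), and \eqref{eq:riesz-coarse-origin} gives \(d_\tau(x)\le C_{d,s}\tau^{-s/2}\).

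The only point needing care is justifying Fubini, and the uniform bound \(g_\tau\le g_\tau(0)\) resolves it.
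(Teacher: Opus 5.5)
Your proof is correct. The positive-semidefiniteness step (the test measure \(\nu=\sum_i a_i\delta_{y_i}-(\sum_i a_i)\rho\)) and the diagonal bound \(0\le d_\tau\le 2g_\tau(0)\le C_{d,s}\tau^{-s/2}\) are argued exactly as in the paper.

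The difference lies in how you verify the identity \eqref{eq:riesz-coarse-positive}. The paper invokes the argument of Lemma~\ref{lem:log-coarse-positive}: Plancherel applied to the mollified measure \(p_\varepsilon*\nu\), the \(t\)-integral formula for \(\widehat{g_\tau}\), and a limit \(\varepsilon\downarrow0\). You stay in physical space instead. You insert the semigroup identity \(p_t=p_{t/2}*p_{t/2}\) and justify every interchange by Tonelli with \(\abs\nu\), via \(\iint g_\tau(x-y)\abs\nu(\dd x)\abs\nu(\dd y)\le g_\tau(0)\norm{\nu}_{\TV}^2\).

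What your route buys is simplicity. It is self-contained and needs no Fourier transforms of measures and no limiting argument. It relies, however, on \(g_\tau\) being a bounded, absolutely convergent heat integral. That is exactly what \(s>0\) provides, since \(\int_\tau^\infty t^{-s/2-1}\dd t<\infty\).

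What the paper's Fourier route buys is uniformity across both cases. In the logarithmic case \(\int_\tau^\infty t^{d/2-1}p_t(x)\dd t\) diverges, and \(g_\tau\) is defined as \(g_{d,0}-r_\tau\) and is unbounded. There the identity holds only because of the cancellation \(\nu(\mathbb R^d)=0\), so your direct Tonelli argument would not apply without extra work.

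Your preliminary paragraph on the integrability of \(\widehat{g_\tau}\) is never used in the Fubini argument and can be dropped.
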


\begin{proof}
Using the same argument as in Lemma~\ref{lem:log-coarse-positive}, we obtain
\eqref{eq:riesz-coarse-positive} and conclude that
\((g_\tau)_\rho^\circ\) is positive semidefinite.  When \(s>0\), no
condition on \(\nu(\mathbb R^d)\) is needed.

Using \(0\le g_\tau(z)\le g_\tau(0)\) and
\eqref{eq:riesz-coarse-origin}, we obtain
\[
 0\le d_\tau(x)
 \le g_\tau(0)+\int(g_\tau*\rho)\rho
 \le2g_\tau(0)
 \le C_{d,s}\tau^{-s/2},
\]
where the penultimate inequality follows from
\((g_\tau*\rho)(x)\le g_\tau(0)\) and \(\int_{\mathbb R^d}\rho=1\).
\end{proof}

We next derive an upper bound for the partition function
associated with the full kernel \(g=g_\tau+r_\tau\).

\begin{proposition}
\label{prop:coarse-fine-estimate}
Under Assumption~\ref{ass:kernel-density}, there exist universal constants
\(c,C>0\) such that, for every \(\beta>0\) satisfying
\[
 \beta\norm{\rho}_\infty\norm{r_\tau}_1\le c,
\]
and every \(N\ge2\),
\[
 \log\cZ_{N,\rho}(\beta;g)
 \le\frac N2\log\int_{\mathbb R^d}
 \exp\left(\frac{\beta}{N-1}d_\tau(x)\right)\rho(x)\dd x
 +C\beta^2\norm{\rho}_\infty\norm{r_\tau}_2^2.
\]
\end{proposition}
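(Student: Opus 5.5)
Split the modulated energy along the heat decomposition \(g=g_\tau+r_\tau\). Since centering is linear,
\[
 \cU_{N,\rho}(g)=\cU_{N,\rho}(g_\tau)+\cU_{N,\rho}(r_\tau).
\]
Applying Cauchy--Schwarz (H\"older with exponents \(2,2\)) to \(\E e^{-\beta\cU_{N,\rho}(g)}\) gives
\[
 \log\cZ_{N,\rho}(\beta;g)\le\tfrac12\log\cZ_{N,\rho}(2\beta;g_\tau)+\tfrac12\log\cZ_{N,\rho}(2\beta;r_\tau).
\]
Each factor is then estimated by a different mechanism: positivity for the coarse part, and Theorem~\ref{thm:canonical} for the fine part.

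\emph{Coarse part.} Since \((g_\tau)_\rho^\circ\) is positive semidefinite, the quadratic form on the full configuration is nonnegative:
\[
 \sum_{i,j}(g_\tau)_\rho^\circ(X_i,X_j)\ge0.
\]
This means the off-diagonal sum dominates minus the diagonal:
\[
 2\sum_{i<j}(g_\tau)_\rho^\circ(X_i,X_j)\ge-\sum_i d_\tau(X_i).
\]
Hence
\[
 -2\beta\,\cU_{N,\rho}(g_\tau)\le\frac{\beta}{N-1}\sum_i d_\tau(X_i).
\]
By independence,
\[
 \cZ_{N,\rho}(2\beta;g_\tau)\le\Bigl(\int e^{\beta d_\tau/(N-1)}\rho\Bigr)^N.
\]
Halving the logarithm gives exactly the first term \(\frac N2\log\int\exp(\frac{\beta}{N-1}d_\tau)\rho\). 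This is where the constant factor in the Hölder exponent must match. If the weights in Hölder are adjusted, the diagonal exponent changes. I would choose the splitting so that the coarse part appears with exponent \(\beta/(N-1)\) as stated. With exponents \(2,2\) this works out exactly, as computed.

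\emph{Fine part.} Apply Theorem~\ref{thm:canonical} with \(E=\mathbb R^d\), \(\mu=\rho\dd x\), \(K=(r_\tau)_\rho^\circ\) and \(a=2\beta\). Marginal cancellation holds by construction. For the negative part, \(r_\tau\ge0\) and \(r_\tau*\rho\le\norm{\rho}_\infty\norm{r_\tau}_1\), so
\[
 K\ge-2\norm{\rho}_\infty\norm{r_\tau}_1 \quad\text{and}\quad \norm{K_-}_\infty\le2\norm{\rho}_\infty\norm{r_\tau}_1.
\]
The smallness condition \eqref{eq:canonical-smallness} therefore follows from \(\beta\norm{\rho}_\infty\norm{r_\tau}_1\le c\), after shrinking \(c\). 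For the \(L^2\) norm, centering is an orthogonal projection in \(L^2(\rho^{\otimes2})\), so
\[
 \norm{K}_{L^2(\rho^{\otimes2})}^2\le\iint r_\tau(x-y)^2\rho(x)\rho(y)\le\norm{\rho}_\infty\norm{r_\tau}_2^2,
\]
using \(\int\rho=1\). Theorem~\ref{thm:canonical} then gives
\[
 \tfrac12\log\cZ_{N,\rho}(2\beta;r_\tau)\le C\beta^2\norm{\rho}_\infty\norm{r_\tau}_2^2.
\]

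\emph{Expected obstacles.} The main technical points are two. First, justify that \(K\in L^2\) and that all the integrals are finite, in particular that \(d_\tau\) and \(g_\tau*\rho\) are well defined under the log-tail assumption; Lemma~\ref{lem:log-coarse-positive} and the bounds on \(d_\tau\) already supply this. Second, check the positive-semidefinite inequality almost everywhere with the diagonal included. I would obtain that inequality from Lemma~\ref{lem:log-coarse-positive} applied to
\[
 \nu=\sum_i\delta_{X_i}-N\rho.
\]
That lemma gives exactly \(\sum_{i,j}(g_\tau)_\rho^\circ(X_i,X_j)\ge0\), with \(g_\tau\) continuous so the diagonal terms make sense.
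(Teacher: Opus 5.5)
Your proposal is correct and follows essentially the same route as the paper. You apply Cauchy--Schwarz along \(g=g_\tau+r_\tau\), use positive semidefiniteness of \((g_\tau)_\rho^\circ\) (diagonal included) to bound the coarse factor by \(\bigl(\int e^{\beta d_\tau/(N-1)}\rho\bigr)^N\), and apply Theorem~\ref{thm:canonical} with \(a=2\beta\) to \((r_\tau)_\rho^\circ\), using \(\norm{((r_\tau)_\rho^\circ)_-}_{L^\infty}\le2\norm{\rho}_\infty\norm{r_\tau}_1\) and \(\norm{(r_\tau)_\rho^\circ}_{L^2(\rho^{\otimes2})}^2\le\norm{\rho}_\infty\norm{r_\tau}_2^2\). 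The only difference is cosmetic: you get the \(L^2\) bound because centering is an orthogonal projection, while the paper writes \(\norm{(r_\tau)_\rho^\circ}^2=\iint(r_\tau)_\rho^\circ(x,y)\,r_\tau(x-y)\rho(x)\rho(y)\dd x\dd y\) and applies Cauchy--Schwarz, which is the same fact.
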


\begin{proof}
By the Cauchy--Schwarz inequality and
\(\cU_{N,\rho}(g)=\cU_{N,\rho}(g_\tau)+\cU_{N,\rho}(r_\tau)\), we obtain
\begin{equation}\label{eq:coarse-fine-cauchy-schwarz}
 \cZ_{N,\rho}(\beta;g)
 \le
 \left(\E e^{-2\beta\cU_{N,\rho}(g_\tau)}\right)^{1/2}
 \left(\E e^{-2\beta\cU_{N,\rho}(r_\tau)}\right)^{1/2}.
\end{equation}

We first estimate the contribution of \(g_\tau\).  By
Lemma~\ref{lem:log-coarse-positive} in the logarithmic case and
\eqref{eq:riesz-coarse-positive} when \(s>0\),
\((g_\tau)_\rho^\circ\) is positive semidefinite.  Hence, for every
configuration,
\[
 \cU_{N,\rho}(g_\tau;X^{1:N})
 =\frac1{2(N-1)}
 \left[\sum_{i,j=1}^N(g_\tau)_\rho^\circ(X_i,X_j)
 -\sum_{i=1}^Nd_\tau(X_i)\right]
 \ge-\frac1{2(N-1)}\sum_{i=1}^Nd_\tau(X_i).
\]
Consequently,
\begin{equation}\label{eq:coarse-part-exponential}
 \E e^{-2\beta\cU_{N,\rho}(g_\tau)}
 \le
 \left(\int_{\mathbb R^d}
 e^{\beta d_\tau(x)/(N-1)}\rho(x)\dd x\right)^N.
\end{equation}

We next estimate the contribution of \(r_\tau\).  The kernel
\((r_\tau)_\rho^\circ\) satisfies the two cancellations in
\eqref{eq:canonical-assumptions}, and
\[
 \cU_{N,\rho}(r_\tau;X^{1:N})
 =\frac1{N-1}\sum_{i<j}(r_\tau)_\rho^\circ(X_i,X_j).
\]
By the definition of \(r_\tau\) in the heat kernel
decomposition \eqref{eq:heat-decomposition}, \(r_\tau\ge0\), and hence
\[
 (r_\tau)_\rho^\circ(x,y)
 \ge-(r_\tau*\rho)(x)-(r_\tau*\rho)(y),
 \qquad
 \norm{\bigl((r_\tau)_\rho^\circ\bigr)_-}_{L^\infty(\rho^{\otimes2})}
 \le2\norm{\rho}_\infty\norm{r_\tau}_1.
\]
By the definition of \((r_\tau)_\rho^\circ\) and the two cancellations in
\eqref{eq:canonical-assumptions}, we have
\[
 \norm{(r_\tau)_\rho^\circ}_{L^2(\rho^{\otimes2})}^2
 =\iint (r_\tau)_\rho^\circ(x,y)r_\tau(x-y)
 \rho(x)\rho(y)\dd x\dd y.
\]
By the Cauchy--Schwarz inequality, we obtain
\[
 \norm{(r_\tau)_\rho^\circ}_{L^2(\rho^{\otimes2})}^2
 \le\iint r_\tau(x-y)^2\rho(x)\rho(y)\dd x\dd y
 \le\norm{\rho}_\infty\norm{r_\tau}_2^2.
\]
Under the condition \(\beta\norm{\rho}_\infty\norm{r_\tau}_1\le c\), the
bounds for the negative part and the \(L^2\) norm verify the assumptions of
Theorem~\ref{thm:canonical} for \((r_\tau)_\rho^\circ\) with \(a=2\beta\).
Therefore,
\begin{equation}\label{eq:short-part-exponential}
 \E e^{-2\beta\cU_{N,\rho}(r_\tau)}
 \le \exp\left(C\beta^2\norm{\rho}_\infty\norm{r_\tau}_2^2\right).
\end{equation}
Combining \eqref{eq:coarse-fine-cauchy-schwarz},
\eqref{eq:coarse-part-exponential}, and
\eqref{eq:short-part-exponential}, we obtain
\[
 \cZ_{N,\rho}(\beta;g)
 \le
 \left(\int_{\mathbb R^d}
 e^{\beta d_\tau(x)/(N-1)}\rho(x)\dd x\right)^{N/2}
 \exp\left(C\beta^2\norm{\rho}_\infty\norm{r_\tau}_2^2\right).
\]
Taking logarithms completes the proof.
\end{proof}

\subsection{Proof of uniform partition-function estimates}
\label{subsec:proof-uniform-partition}

\begin{proof}[Proof of Theorem~\ref{thm:uniform-partition}]
Set \(g=g_{d,s}^{(m)}\).
Since
\(\E_{\rho^{\otimes N}}\cU_{N,\rho}(g)=0\), Jensen's inequality gives
\[
 \cZ_{N,\rho}(\beta;g)\ge1,
 \qquad \beta\ge0.
\]
It remains to prove the upper bounds.  For each kernel, we first choose a
\(\beta\)-dependent cutoff in \eqref{eq:heat-decomposition} and apply
Proposition~\ref{prop:coarse-fine-estimate} for every \(\beta>0\).  The
resulting estimate is uniform in \(N\) and has the stated large-\(\beta\) order.
We then improve the bound near \(\beta=0\) to quadratic order.

\medskip
\noindent\textit{The logarithmic kernel.}\par
\medskip

Assume that \(s=m=0\).  For \(\varepsilon\in(0,1]\) and \(\beta>0\), set
\[
 \tau=\left(\frac{\varepsilon}{1+\beta}\right)^{2/d}.
\]
By the logarithmic remainder estimate
\eqref{eq:log-remainder-scales},
\[
 \beta\norm{\rho}_\infty\norm{r_\tau}_1
 \le C_{d,\rho}\beta\tau^{d/2}
 =C_{d,\rho}\varepsilon\frac{\beta}{1+\beta}
 \le C_{d,\rho}\varepsilon.
\]
Choose \(\varepsilon\in(0,1]\), depending only on \(d\) and \(\rho\), so that
\(C_{d,\rho}\varepsilon\le c_0\), where \(c_0\) denotes the constant \(c\) in
Proposition~\ref{prop:coarse-fine-estimate}.  Then the smallness condition in
Proposition~\ref{prop:coarse-fine-estimate} is satisfied.  By
Proposition~\ref{prop:coarse-fine-estimate},
\eqref{eq:log-diagonal-cost}, \eqref{eq:log-remainder-scales}, and the choice
of \(\tau\), we obtain
\begin{equation}\label{eq:log-all-beta-bound}
\begin{aligned}
 \sup_{N\ge2}\log\cZ_{N,\rho}(\beta;g_{d,0})
 &\le C_{d,\rho}\beta
 \left(1+\log((1+\beta)/\varepsilon)+\log(2+\beta)\right)
 +C_{d,\rho}\frac{\beta^2}{1+\beta}\\
 &\le C_{d,\rho}\beta\log(2+\beta),
 \qquad \beta>0.
\end{aligned}
\end{equation}
For \(\beta\ge1\), \eqref{eq:log-all-beta-bound} is the \(s=0\) estimate in
\eqref{eq:riesz-parameter-bound}.  Taking \(\beta=2\) in
\eqref{eq:log-all-beta-bound}, we obtain
\begin{equation}\label{eq:log-fixed-exponential-moment}
 \sup_{N\ge2}\E_{\rho^{\otimes N}}
 e^{-2\cU_{N,\rho}(g_{d,0})}<\infty.
\end{equation}

To obtain the quadratic bound for \(0\le\beta\le1\), we first estimate the
second moment.
Notice that, by the two cancellations in \eqref{eq:canonical-centering},
\[
 \E_{\rho^{\otimes N}}\!\left[
 (g_{d,0})_\rho^\circ(X_i,X_j)
 (g_{d,0})_\rho^\circ(X_k,X_\ell)
 \right]=0,
 \qquad \{i,j\}\ne\{k,\ell\}.
\]
Hence,
\begin{equation}\label{eq:log-second-moment}
\begin{aligned}
 \E_{\rho^{\otimes N}}\!\left[\abs{\cU_{N,\rho}(g_{d,0})}^2\right]
 &=\frac{N}{2(N-1)}
   \norm{(g_{d,0})_\rho^\circ}_{L^2(\rho^{\otimes2})}^2\\
 &\le\norm{(g_{d,0})_\rho^\circ}_{L^2(\rho^{\otimes2})}^2
 \le\norm{g_{d,0}(x-y)}_{L^2(\rho^{\otimes2})}^2<\infty.
\end{aligned}
\end{equation}
Here we used the \(L^2\)-contraction of
\(K\mapsto K_\rho^\circ\), the boundedness of
\(\rho\), the local square integrability of the logarithmic singularity, and
the assumption
\eqref{eq:log-tail-assumption}.

Taylor's formula with integral remainder gives, for every
\(y\in\mathbb R\) and \(0\le\beta\le1\),
\begin{equation}\label{eq:small-beta-taylor}
 0\le e^{-\beta y}-1+\beta y
 =\beta^2y^2\int_0^1(1-t)e^{-t\beta y}\dd t
 \le C\beta^2\left(y^2+e^{-2y}\right).
\end{equation}
Taking \(y=\cU_{N,\rho}(g_{d,0})\) in
\eqref{eq:small-beta-taylor}, and
using
\(\E_{\rho^{\otimes N}}\cU_{N,\rho}(g_{d,0})=0\), the second moment bound
\eqref{eq:log-second-moment}, and the fixed exponential moment bound
\eqref{eq:log-fixed-exponential-moment}, we obtain
\[
 1\le\cZ_{N,\rho}(\beta;g_{d,0})
 \le1+C_{d,\rho}\beta^2,
 \qquad 0\le\beta\le1.
\]
Thus,
\[
 0\le\log\cZ_{N,\rho}(\beta;g_{d,0})
 \le C_{d,\rho}\beta^2,
 \qquad 0\le\beta\le1.
\]

\medskip
\noindent\textit{The Riesz and Bessel--Riesz kernels.}\par
\medskip

Assume that \(0<s<d/2\).  For \(\varepsilon\in(0,1]\) and \(\beta>0\), set
\[
 \tau=\left(\frac{\varepsilon}{1+\beta}\right)^{2/(d-s)}.
\]
By the estimate
\eqref{eq:riesz-remainder-l1},
\[
 \beta\norm{\rho}_\infty\norm{r_\tau}_1
 \le C_{d,s,\rho}\beta\tau^{(d-s)/2}
 =C_{d,s,\rho}\varepsilon\frac{\beta}{1+\beta}
 \le C_{d,s,\rho}\varepsilon.
\]
As in the logarithmic case, choose \(\varepsilon\in(0,1]\) sufficiently small.
By Proposition~\ref{prop:coarse-fine-estimate}, the
bounds \eqref{eq:riesz-remainder-l2} and
\eqref{eq:riesz-coarse-diagonal}, and the choice of \(\tau\), we obtain
\begin{equation}\label{eq:riesz-all-beta-bound}
\begin{aligned}
 \sup_{N\ge2}\log\cZ_{N,\rho}(\beta;g_{d,s}^{(m)})
 &\le C_{d,s,\rho}\beta\tau^{-s/2}
      +C_{d,s,\rho}\beta^2\tau^{d/2-s}\\
 &\le C_{d,s,m,\rho}\beta(1+\beta)^{s/(d-s)},
 \qquad \beta>0.
\end{aligned}
\end{equation}
For \(\beta\ge1\), by \eqref{eq:riesz-all-beta-bound} and
\(1+\beta\le2\beta\), we obtain the \(0<s<d/2\) estimate in
\eqref{eq:riesz-parameter-bound}.

For \(0\le\beta\le1\), by \eqref{eq:riesz-all-beta-bound} at \(\beta=2\),
\(0\le g\le g_{d,s}\), \(2s<d\), and the two cancellations in
\eqref{eq:canonical-centering}, we obtain
\[
 \sup_{N\ge2}\E_{\rho^{\otimes N}}e^{-2\cU_{N,\rho}(g)}<\infty,
 \qquad
 \sup_{N\ge2}\E_{\rho^{\otimes N}}\abs{\cU_{N,\rho}(g)}^2
 \le\norm{g_\rho^\circ}_{L^2(\rho^{\otimes2})}^2<\infty.
\]
Applying \eqref{eq:small-beta-taylor} with
\(y=\cU_{N,\rho}(g)\), and using
\eqref{eq:canonical-centering}, we obtain
\[
 0\le\log\cZ_{N,\rho}(\beta;g)
 \le C_{d,s,m,\rho}\beta^2,
 \qquad 0\le\beta\le1.
\]
This completes the proof.
\end{proof}

\section{Application I: Entropic mean-field convergence near thermal equilibrium}
\label{sec:gibbs-integrability}

In this section, we first prove uniform R\'enyi divergence and relative
entropy bounds for the centered Gibbs measure on position space.  At thermal
equilibrium, its density is exactly the Radon--Nikodym derivative of the
interacting Gibbs equilibrium with respect to the product equilibrium on
phase space.  We use this identity to obtain a time-uniform entropy estimate.

Let \(g=g_{d,s}^{(m)}\), and let \(\rho\) satisfy
Assumption~\ref{ass:kernel-density}.  Recall that
\(\cP_{N,\rho}^{\beta}\) is the centered Gibbs measure defined by
\eqref{eq:centered-gibbs-measure}.

\begin{proposition}
\label{prop:gibbs-entropy}
For every \(\beta>0\) and \(p>1\), we have
\[
 \sup_{N\ge2}\left\{
  D_p(\cP_{N,\rho}^{\beta}\mid\rho^{\otimes N})
  +\Ent(\cP_{N,\rho}^{\beta}\mid\rho^{\otimes N})
  +\Ent(\rho^{\otimes N}\mid\cP_{N,\rho}^{\beta})
 \right\}<\infty.
\]
Furthermore, for every \(\beta_N\ge1\),
\begin{equation}\label{eq:gibbs-betaN-entropy}
 \begin{aligned}
  \Ent(\cP_{N,\rho}^{\beta_N}\mid\rho^{\otimes N})
  &\le D_2(\cP_{N,\rho}^{\beta_N}\mid\rho^{\otimes N})\\
  &\le C_{d,s,m,\rho}
  \begin{cases}
   \beta_N\log(2+\beta_N),&s=0,\\
   \beta_N^{d/(d-s)},&0<s<d/2.
  \end{cases}
 \end{aligned}
\end{equation}
\end{proposition}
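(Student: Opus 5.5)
The plan is to express every divergence through centered partition functions and then apply Theorem~\ref{thm:uniform-partition}. Write \(W_N:=\dd\cP_{N,\rho}^{\beta}/\dd\rho^{\otimes N}=e^{-\beta\cU_{N,\rho}(g)}/\cZ_{N,\rho}(\beta;g)\). For \(p>1\),
\[
 \int W_N^p\dd\rho^{\otimes N}
 =\frac{\cZ_{N,\rho}(p\beta;g)}{\cZ_{N,\rho}(\beta;g)^p},
\]
and therefore
\[
 D_p(\cP_{N,\rho}^{\beta}\mid\rho^{\otimes N})
 =\frac1{p-1}\Bigl(\log\cZ_{N,\rho}(p\beta;g)-p\log\cZ_{N,\rho}(\beta;g)\Bigr)
 \le\frac1{p-1}\log\cZ_{N,\rho}(p\beta;g).
\]
The inequality uses the lower bound \(\cZ_{N,\rho}(\beta;g)\ge1\) from \eqref{eq:riesz-uniform}. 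The upper bound in \eqref{eq:riesz-uniform} at parameter \(p\beta\) then makes this bounded uniformly in \(N\). The forward entropy is controlled through \(\Ent\le D_p\), as noted in Section~\ref{subsec:notation}. Alternatively, the argument from the introduction gives \(\Ent(\cP\mid\rho^{\otimes N})\le\log\cZ_{N,\rho}(2\beta;g)\).

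For the reverse entropy I will use \(\E_{\rho^{\otimes N}}\cU_{N,\rho}(g)=0\). This gives
\[
 \Ent(\rho^{\otimes N}\mid\cP_{N,\rho}^{\beta})
 =-\E_{\rho^{\otimes N}}\log W_N
 =\beta\E_{\rho^{\otimes N}}\cU_{N,\rho}(g)+\log\cZ_{N,\rho}(\beta;g)
 =\log\cZ_{N,\rho}(\beta;g),
\]
which is again uniformly bounded by Theorem~\ref{thm:uniform-partition}. Summing the three pieces gives the first claim.

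For \eqref{eq:gibbs-betaN-entropy}, the first inequality is \(\Ent\le D_2\). The case \(p=2\) of the displayed bound gives \(D_2\le\log\cZ_{N,\rho}(2\beta_N;g)\). Because \(2\beta_N\ge2\ge1\), the large-parameter estimate \eqref{eq:riesz-parameter-bound} applies and yields \(C\,2\beta_N\log(2+2\beta_N)\) for \(s=0\) and \(C(2\beta_N)^{d/(d-s)}\) for \(s>0\). Adjusting the constant, using \(\log(2+2\beta_N)\le 2\log(2+\beta_N)\), gives the claim. The constant does not depend on \(N\) or \(\beta_N\), so \(\beta_N\) may vary with \(N\).

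No step here is a genuine obstacle: everything reduces to the identity for \(\int W_N^p\) and to the sign \(\cZ\ge1\), which lets the denominator be dropped. The only points to check are that the finiteness statements are justified, since \(\cZ_{N,\rho}(p\beta;g)<\infty\) makes \(W_N\in L^p\) and hence all the integrals finite, and that \(\log W_N\) is \(\rho^{\otimes N}\)-integrable. The latter follows from \(\cU_{N,\rho}(g)\in L^2\), which is the second-moment bound used in the proof of Theorem~\ref{thm:uniform-partition}.
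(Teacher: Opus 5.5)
Your proposal is correct and follows the paper's proof essentially verbatim: both express $D_p$ and the reverse entropy through $\log\cZ_{N,\rho}(p\beta;g)$ and $\log\cZ_{N,\rho}(\beta;g)$, drop the denominator using $\cZ_{N,\rho}\ge1$, bound the forward entropy by $D_p$, and for $\beta_N$ apply \eqref{eq:riesz-parameter-bound} at $2\beta_N$ with $p=2$. Your added checks, $\log(2+2\beta_N)\le2\log(2+\beta_N)$ and the $\rho^{\otimes N}$-integrability of $\log W_N$ via the second-moment bound, are valid details that the paper leaves implicit.
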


\begin{proof}
By a direct calculation,
\begin{align*}
 D_p(\cP_{N,\rho}^{\beta}\mid\rho^{\otimes N})
 &=\frac{\log\cZ_{N,\rho}(p\beta;g)
       -p\log\cZ_{N,\rho}(\beta;g)}{p-1},\\
 \Ent(\rho^{\otimes N}\mid\cP_{N,\rho}^{\beta})
 &=\log\cZ_{N,\rho}(\beta;g),
\end{align*}
where the second identity follows from
\(\E_{\rho^{\otimes N}}\cU_{N,\rho}(g)=0\).  Since
\(\cZ_{N,\rho}(a;g)\ge1\) by Jensen's inequality and relative entropy is
bounded by R\'enyi divergence,
\begin{equation}\label{eq:gibbs-renyi-partition}
 \Ent(\cP_{N,\rho}^{\beta}\mid\rho^{\otimes N})
 \le D_p(\cP_{N,\rho}^{\beta}\mid\rho^{\otimes N})
 \le \frac{\log\cZ_{N,\rho}(p\beta;g)}{p-1}.
\end{equation}
By Theorem~\ref{thm:uniform-partition} at \(\beta\) and \(p\beta\),
\(D_p(\cP_{N,\rho}^{\beta}\mid\rho^{\otimes N})\) and both relative
entropies are bounded uniformly in \(N\).

Taking \(\beta=\beta_N\) and \(p=2\) in the
estimate \eqref{eq:gibbs-renyi-partition}, and using a uniform
partition-function estimate with explicit parameter dependence, namely
\eqref{eq:riesz-parameter-bound}, at \(2\beta_N\), we obtain
\eqref{eq:gibbs-betaN-entropy}.
\end{proof}

We now turn to thermal equilibrium.  Fix \(\beta>0\) and
\(V\in C^1(\mathbb R^d)\).  Suppose that a probability density
\(\rho_\beta\) satisfying Assumption~\ref{ass:kernel-density} solves
\[
 \rho_\beta(x)=z_{\beta,V}^{-1}
 \exp\left(-\beta V(x)-\beta(g*\rho_\beta)(x)\right),
\]
where \(z_{\beta,V}>0\) is the normalizing constant.  Set
\[
 m_\beta(x,v):=\rho_\beta(x)
 \left(\frac{\beta}{2\pi}\right)^{d/2}e^{-\beta\abs{v}^2/2},
 \qquad
 \cM_{N,\beta}:=m_\beta^{\otimes N}.
\]
Writing \(X^{1:N}=(x_1,\ldots,x_N)\), \(Z_i=(x_i,v_i)\),
\(Z^{1:N}=(Z_1,\ldots,Z_N)\), and
\(\dd Z^{1:N}=\prod_{i=1}^N\dd x_i\dd v_i\), define
\[
 H_{N,V}(Z^{1:N})
 :=\frac12\sum_{i=1}^N\abs{v_i}^2+\sum_{i=1}^N V(x_i)
  +\frac1{2(N-1)}\sum_{i=1}^N\sum_{j\ne i}g(x_i-x_j)
\]
and
\[
 \cG_{N,\beta}(Z^{1:N})
 :=\frac{e^{-\beta H_{N,V}(Z^{1:N})}}
 {\displaystyle\int e^{-\beta H_{N,V}}\dd Z^{1:N}}.
\]
Thus \(\cM_{N,\beta}\) is the product equilibrium, whereas
\(\cG_{N,\beta}\) is the interacting \(N\)-particle Gibbs equilibrium.
By the equation for \(\rho_\beta\) and
the modulated-energy expansion
\eqref{eq:modulated-energy-expansion}, for some \(C>0\) independent of
\(Z^{1:N}\),
\[
 e^{-\beta H_{N,V}(Z^{1:N})}
 =C
  e^{-\beta\cU_{N,\rho_\beta}(g)}\cM_{N,\beta}(Z^{1:N}).
\]
By Theorem~\ref{thm:uniform-partition},
\(e^{-\beta\cU_{N,\rho_\beta}(g)}\) is integrable with respect to
\(\cM_{N,\beta}\).  Hence \(\cG_{N,\beta}\) is well defined, and after
normalization we obtain
\begin{equation}\label{eq:kinetic-gibbs-factorization}
 \frac{\cG_{N,\beta}}{\cM_{N,\beta}}(Z^{1:N})
 =\cZ_{N,\rho_\beta}(\beta;g)^{-1}
  e^{-\beta\cU_{N,\rho_\beta}(g;X^{1:N})}
 =\frac{\dd\cP_{N,\rho_\beta}^{\beta}}
        {\dd\rho_\beta^{\otimes N}}(X^{1:N}).
\end{equation}

Let \(F^N(t,Z^{1:N})\) satisfy the \(N\)-particle kinetic
Fokker--Planck equation
\begin{equation}\label{eq:N-particle-kinetic-fokker-planck}
 \begin{aligned}
  \partial_tF^N
  &+\sum_{i=1}^Nv_i\cdot\nabla_{x_i}F^N
  -\sum_{i=1}^N\left(
    \nabla V(x_i)+\frac1{N-1}\sum_{j\ne i}\nabla g(x_i-x_j)
   \right)\cdot\nabla_{v_i}F^N\\
  &=\sigma\sum_{i=1}^N\nabla_{v_i}\!\cdot\left(
    v_iF^N+\frac1\beta\nabla_{v_i}F^N
   \right).
 \end{aligned}
\end{equation}

For a probability density \(F\) on
\((\mathbb R^d\times\mathbb R^d)^N\), define the relative Fisher
information in the velocity variables by
\[
 \cI_v(F\mid\cG_{N,\beta})
 :=\sum_{i=1}^N\int F
 \left|\nabla_{v_i}\log\frac{F}{\cG_{N,\beta}}\right|^2\dd Z^{1:N}.
\]
For a smooth positive solution of
\eqref{eq:N-particle-kinetic-fokker-planck}, a direct calculation gives
\[
 \Ent(F^N(t)\mid\cG_{N,\beta})
 +\frac{\sigma}{\beta}\int_0^t
  \cI_v(F^N(r)\mid\cG_{N,\beta})\dd r
 =\Ent(F^N(0)\mid\cG_{N,\beta}).
\]
For weak solutions, an appropriate approximation and lower semicontinuity
yield the entropy inequality
\begin{equation}\label{eq:equilibrium-entropy-interface}
 \Ent(F^N(t)\mid\cG_{N,\beta})
 +\frac{\sigma}{\beta}\int_0^t
  \cI_v(F^N(r)\mid\cG_{N,\beta})\dd r
 \le\Ent(F^N(0)\mid\cG_{N,\beta}).
\end{equation}

\begin{proposition}
\label{prop:equilibrium-entropy-transfer}
Let \(F^N(t)\) be a weak solution of
\eqref{eq:N-particle-kinetic-fokker-planck} such that
\(\Ent(F^N(0)\mid\cG_{N,\beta})<\infty\).  For every \(p>1\), there exists
\(C<\infty\), independent of \(N\) and \(t\), such that, for all
\(N\ge2\) and \(t\ge0\),
\begin{equation}\label{eq:equilibrium-entropy-transfer}
 \begin{aligned}
  \Ent(F^N(t)\mid\cM_{N,\beta})
  &\le\frac{p}{p-1}\Ent(F^N(0)\mid\cG_{N,\beta})
     +\frac{1}{p-1}\log\cZ_{N,\rho_\beta}(p\beta;g)\\
  &\le\frac{p}{p-1}\Ent(F^N(0)\mid\cG_{N,\beta})+C.
 \end{aligned}
\end{equation}
If \(F^N(0)=\cM_{N,\beta}\), then
\[
 \Ent(F^N(t)\mid\cM_{N,\beta})
 \le\frac{1}{p-1}\log\cZ_{N,\rho_\beta}(p\beta;g)
 \le C.
\]
\end{proposition}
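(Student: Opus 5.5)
The plan is to combine the entropy inequality \eqref{eq:equilibrium-entropy-interface} with the Gibbs variational inequality (entropy duality), using the exact factorization \eqref{eq:kinetic-gibbs-factorization} to express the density of \(\cG_{N,\beta}\) relative to \(\cM_{N,\beta}\).

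First, fix \(t\ge0\), write \(F=F^N(t)\), and assume \(\Ent(F\mid\cG_{N,\beta})<\infty\), which follows from \eqref{eq:equilibrium-entropy-interface}. Since \(\cG_{N,\beta}\ll\cM_{N,\beta}\), the chain rule for densities gives
\[
 \Ent(F\mid\cM_{N,\beta})
 =\Ent(F\mid\cG_{N,\beta})
 +\int F\log\frac{\cG_{N,\beta}}{\cM_{N,\beta}}\dd Z^{1:N}.
\]
By \eqref{eq:kinetic-gibbs-factorization}, the second term equals \(\E_F[\phi]\) with
\[
 \phi=-\beta\cU_{N,\rho_\beta}(g)-\log\cZ_{N,\rho_\beta}(\beta;g),
\]
a function of positions only. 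Note that \(\log\) of the density may be unbounded. I would therefore justify the identity by truncation, or work directly with the duality bound below applied to \(F\) relative to \(\cM\). If integrability of the splitting is delicate, this is where the care goes.

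Second, apply the Gibbs variational inequality \(\E_F\psi\le\Ent(F\mid\cM)+\log\E_{\cM}e^{\psi}\) with \(\psi=\lambda\phi\), where \(\lambda>0\) is chosen below. Since \(\phi\) depends only on positions and \(\cM_{N,\beta}\) has position marginal \(\rho_\beta^{\otimes N}\),
\[
 \log\E_{\cM}e^{\lambda\phi}
 =\log\cZ_{N,\rho_\beta}(\lambda\beta;g)-\lambda\log\cZ_{N,\rho_\beta}(\beta;g)
 \le\log\cZ_{N,\rho_\beta}(\lambda\beta;g),
\]
where the last step uses \(\cZ\ge1\). This gives
\[
 \E_F\phi\le\lambda^{-1}\Ent(F\mid\cM)+\lambda^{-1}\log\cZ_{N,\rho_\beta}(\lambda\beta;g).
\]
Inserting this into the first identity yields
\[
 (1-\lambda^{-1})\Ent(F\mid\cM)\le\Ent(F\mid\cG)+\lambda^{-1}\log\cZ(\lambda\beta).
\]
Taking \(\lambda=p\) and multiplying by \(p/(p-1)\) gives
\[
 \Ent(F\mid\cM)\le\frac p{p-1}\Ent(F\mid\cG)+\frac1{p-1}\log\cZ(p\beta).
\]
This requires \(\Ent(F\mid\cM)<\infty\) in order to absorb the term on the left. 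I would secure this a priori by first applying the argument to truncations \(\phi\wedge k\), or by noting that \(\Ent(F\mid\cM)\le\Ent(F\mid\cG)+\E_F\phi\) with \(\E_F\phi\) finite, using the same duality relative to \(\cG\) together with the bounded Rényi divergence from Proposition~\ref{prop:gibbs-entropy}. This finiteness/absorption step is the main technical obstacle; everything else is algebra.

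Third, combine with \eqref{eq:equilibrium-entropy-interface}, which gives \(\Ent(F^N(t)\mid\cG)\le\Ent(F^N(0)\mid\cG)\). This yields the first inequality in \eqref{eq:equilibrium-entropy-transfer}. Theorem~\ref{thm:uniform-partition} at parameter \(p\beta\) bounds \(\log\cZ_{N,\rho_\beta}(p\beta;g)\) uniformly in \(N\), and the bound is independent of \(t\), which gives the constant \(C\). Finally, if \(F^N(0)=\cM_{N,\beta}\), then \(\Ent(\cM\mid\cG)=\Ent(\rho_\beta^{\otimes N}\mid\cP^\beta_{N,\rho_\beta})=\log\cZ_{N,\rho_\beta}(\beta;g)\) by Proposition~\ref{prop:gibbs-entropy}. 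Rather than inserting this into the general bound, I would redo the second step with \(\Ent(F\mid\cG)\) controlled directly. A cleaner route uses the nonnegativity of \(\Ent(F\mid\cG)\) differently. The stated bound \(\frac1{p-1}\log\cZ(p\beta)\) should follow from the general inequality after also using \(\frac p{p-1}\log\cZ(\beta)\le\frac1{p-1}\log\cZ(p\beta)\). This holds by log-convexity of \(a\mapsto\cZ(a)\) together with \(\cZ(0)=1\), which gives \(\log\cZ(\beta)\le p^{-1}\log\cZ(p\beta)\). The result is a bound \(\frac{2}{p-1}\log\cZ(p\beta)\), which is bounded uniformly in \(N\). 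If the exact constant matters, I would instead compare \(\Ent(F\mid\cG)\) along the flow with its initial value more carefully.
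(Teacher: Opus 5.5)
Your argument for the first inequality in \eqref{eq:equilibrium-entropy-transfer} goes through. The special case \(F^N(0)=\cM_{N,\beta}\), however, is not proved as stated. Your log-convexity detour gives only \(\frac{2}{p-1}\log\cZ_{N,\rho_\beta}(p\beta;g)\), and the closing remark about comparing ``more carefully'' does not supply the missing step.

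The loss occurs in your second step, where you discard \(-p\log\cZ_{N,\rho_\beta}(\beta;g)\) using \(\cZ\ge1\). Keep the exact value instead:
\[
 \log\E_{\cM_{N,\beta}}e^{p\phi}
 =\log\cZ_{N,\rho_\beta}(p\beta;g)-p\log\cZ_{N,\rho_\beta}(\beta;g)
 =(p-1)D_p(\cG_{N,\beta}\mid\cM_{N,\beta}).
\]
Your own computation then yields \(\Ent(F^N(t)\mid\cM_{N,\beta})\le\frac p{p-1}\Ent(F^N(0)\mid\cG_{N,\beta})+D_p(\cG_{N,\beta}\mid\cM_{N,\beta})\). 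In the special case, \(\Ent(\cM_{N,\beta}\mid\cG_{N,\beta})=\log\cZ_{N,\rho_\beta}(\beta;g)\). The resulting term \(\frac p{p-1}\log\cZ_{N,\rho_\beta}(\beta;g)\) cancels exactly against the \(-\frac p{p-1}\log\cZ_{N,\rho_\beta}(\beta;g)\) inside \(D_p\). What remains is precisely \(\frac1{p-1}\log\cZ_{N,\rho_\beta}(p\beta;g)\). This exact cancellation is how the paper obtains the special case. The inequality \(\cZ\ge1\) should be used only afterwards, for the general bound.

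For the general inequality, your route differs from the paper's only in the reference measure used for duality. That choice is what creates the ``main technical obstacle'' you identify. Applying the Gibbs variational inequality relative to \(\cM_{N,\beta}\) forces you to absorb \(p^{-1}\Ent(F\mid\cM_{N,\beta})\) into the left side, which requires knowing a priori that it is finite. The paper instead applies Donsker--Varadhan relative to \(\cG_{N,\beta}\) to \((p-1)\log(\cG_{N,\beta}/\cM_{N,\beta})\), obtaining
\[
 \E_F\log\frac{\cG_{N,\beta}}{\cM_{N,\beta}}
 \le\frac1{p-1}\Ent(F\mid\cG_{N,\beta})+D_p(\cG_{N,\beta}\mid\cM_{N,\beta}).
\]
Inserting this into the chain rule finishes the proof with no absorption. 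Applied to the positive part, the same bound also justifies the chain rule itself. The finiteness argument you propose (duality relative to \(\cG_{N,\beta}\) together with the bounded R\'enyi divergence) is exactly this inequality. Once you have it, the \(\cM_{N,\beta}\)-duality and the absorption step are redundant.
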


\begin{proof}
By the chain rule for relative entropy and the Donsker--Varadhan inequality
applied to
\((p-1)\log(\cG_{N,\beta}/\cM_{N,\beta})\),
\begin{equation}\label{eq:relative-entropy-renyi-transfer}
 \begin{aligned}
 \Ent(F^N(t)\mid\cM_{N,\beta})
 &=\Ent(F^N(t)\mid\cG_{N,\beta})
   +\int F^N(t)\log\frac{\cG_{N,\beta}}{\cM_{N,\beta}}\dd Z^{1:N}\\
 &\le\frac{p}{p-1}\Ent(F^N(t)\mid\cG_{N,\beta})
   +\frac{1}{p-1}
     \log\int\left(\frac{\cG_{N,\beta}}{\cM_{N,\beta}}\right)^{p-1}
          \cG_{N,\beta}\dd Z^{1:N}\\
 &=\frac{p}{p-1}\Ent(F^N(t)\mid\cG_{N,\beta})
   +D_p(\cG_{N,\beta}\mid\cM_{N,\beta}).
 \end{aligned}
\end{equation}
By the factorization
\eqref{eq:kinetic-gibbs-factorization} and the definition of \(D_p\),
\begin{equation}\label{eq:kinetic-renyi-partition}
 \begin{aligned}
 D_p(\cG_{N,\beta}\mid\cM_{N,\beta})
 &=D_p(\cP_{N,\rho_\beta}^{\beta}
        \mid\rho_\beta^{\otimes N})\\
 &=\frac{\log\cZ_{N,\rho_\beta}(p\beta;g)
       -p\log\cZ_{N,\rho_\beta}(\beta;g)}{p-1}\\
 &\le\frac{\log\cZ_{N,\rho_\beta}(p\beta;g)}{p-1},
 \end{aligned}
\end{equation}
where the last inequality follows from
\(\cZ_{N,\rho_\beta}(\beta;g)\ge1\).  Using
Theorem~\ref{thm:uniform-partition} at \(p\beta\),
\[
 \sup_{N\ge2}D_p(\cG_{N,\beta}\mid\cM_{N,\beta})
 =\sup_{N\ge2}D_p(\cP_{N,\rho_\beta}^{\beta}
        \mid\rho_\beta^{\otimes N})<\infty.
\]

Combining \eqref{eq:relative-entropy-renyi-transfer}
with the entropy-dissipation
inequality \eqref{eq:equilibrium-entropy-interface}, we obtain
\begin{align}
 &\Ent(F^N(t)\mid\cM_{N,\beta})
 +\frac{p}{p-1}\frac{\sigma}{\beta}\int_0^t
   \cI_v(F^N(r)\mid\cG_{N,\beta})\dd r\notag\\
 &\quad\le\frac{p}{p-1}\Ent(F^N(0)\mid\cG_{N,\beta})
 +D_p(\cG_{N,\beta}\mid\cM_{N,\beta}).
 \label{eq:entropy-transfer-with-fisher}
\end{align}
Dropping the Fisher information term in
\eqref{eq:entropy-transfer-with-fisher} and using
\eqref{eq:kinetic-renyi-partition}, we obtain
\eqref{eq:equilibrium-entropy-transfer}.

If \(F^N(0)=\cM_{N,\beta}\), then
\(\int\cU_{N,\rho_\beta}(g)\dd\cM_{N,\beta}=0\).
Hence, by the factorization
\eqref{eq:kinetic-gibbs-factorization},
\[
 \Ent(\cM_{N,\beta}\mid\cG_{N,\beta})
 =\log\cZ_{N,\rho_\beta}(\beta;g).
\]
Together with \eqref{eq:kinetic-renyi-partition}, this yields
\[
 \frac{p}{p-1}\Ent(\cM_{N,\beta}\mid\cG_{N,\beta})
 +D_p(\cG_{N,\beta}\mid\cM_{N,\beta})
 =\frac{1}{p-1}\log\cZ_{N,\rho_\beta}(p\beta;g).
\]
Substituting this identity into \eqref{eq:entropy-transfer-with-fisher}
proves the last assertion.
\end{proof}

\section{Application II: Commutator estimates and quantitative propagation of chaos}
\label{sec:optimal-modulated-free-energy}

In this section, we prove an \(N^{-1}\) bound for the normalized relative
entropy of the three-dimensional Coulomb system.  We first obtain a
commutator estimate from a uniform partition-function estimate and then
combine it with the evolution of the modulated free energy.  Although the dynamical
application below concerns the three-dimensional Coulomb kernel, the
commutator estimate applies to all logarithmic, Riesz, and Bessel--Riesz
kernels covered by Theorem~\ref{thm:uniform-partition} and, more generally,
to nonnegative repulsive potentials admitting a uniform partition-function
estimate and satisfying \(\abs{z}\,\abs{\nabla g(z)}\le C(g(z)+1)\).

\subsection{Commutator estimates}

Let \(g=g_{d,s}^{(m)}\), and let
\(u:\mathbb R^d\to\mathbb R^d\) be globally Lipschitz.  Away from the
diagonal, define
\begin{equation*}
 q_u(x,y):=(u(x)-u(y))\cdot\nabla g(x-y).
\end{equation*}
The kernel \(q_u\) is symmetric.

\begin{proposition}
\label{prop:gibbs-commutator-exponential}
Under Assumption~\ref{ass:kernel-density}, fix \(\beta>0\).  There exists
\(\theta>0\), depending only on \(d,s,m,\beta,\rho\), and
\(\norm{Du}_\infty\), such that
\begin{equation}\label{eq:gibbs-commutator-mgf}
 \sup_{N\ge2}\E_{\cP_{N,\rho}^{\beta}}
 e^{\theta|\cU_{N,\rho}(q_u)|}
 <\infty.
\end{equation}
Moreover, for some \(C<\infty\) independent of \(N\), every probability law
\(P_N\ll\cP_{N,\rho}^{\beta}\) satisfies
\begin{equation}\label{eq:average-coulomb-commutator}
 \frac1N\E_{P_N}|\cU_{N,\rho}(q_u)|
 \le \frac{C}{N}
 \left[\Ent(P_N\mid\cP_{N,\rho}^{\beta})+1\right].
\end{equation}
\end{proposition}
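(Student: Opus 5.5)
The plan is to compare the commutator kernel \(q_u\) pointwise with the interaction \(g\), and to absorb a small multiple of \(U_{N,\rho}(q_u)\) into the Gibbs weight. After that, the problem reduces to Theorem~\ref{thm:uniform-partition} together with Theorem~\ref{thm:canonical}.

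\emph{Step 1: pointwise comparison.} First record that \(\abs z\,\abs{\nabla g(z)}\le C_{d,s,m}(g(z)+1)\) for every \(g=g_{d,s}^{(m)}\), with \(g\ge0\) when \(s>0\).
\begin{itemize}
\item For Riesz kernels, \(\abs z\abs{\nabla g}=s\,g\).
\item For Bessel--Riesz kernels, the bound follows from the heat representation \eqref{eq:riesz-split}, since \(\abs z\abs{\nabla p_t(z)}\le C p_{2t}(z)\) and \(p_{2t}\le 2^{d/2}p_t(\cdot/\sqrt2)\).
\item In the logarithmic case, \(\abs z\abs{\nabla g}\) is constant.
\end{itemize}
Hence \(\abs{q_u(x,y)}\le\norm{Du}_\infty\abs{x-y}\abs{\nabla g(x-y)}\le L(g(x-y)+1)\), where \(L:=C\norm{Du}_\infty\). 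In the logarithmic case \(q_u\) is bounded, and the argument below simplifies because no comparison with \(g\) is needed.

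\emph{Step 2: Hölder splitting.} For \(\lambda>0\) and either sign, use the identity
\[
 \E_{\cP_{N,\rho}^\beta}e^{\pm\lambda\cU_{N,\rho}(q_u)}
 =\cZ_{N,\rho}(\beta;g)^{-1}\E_{\rho^{\otimes N}}e^{-\beta\cU_{N,\rho}(g)\pm\lambda\cU_{N,\rho}(q_u)}
\]
together with \(\cZ\ge1\). Fix \(\delta\in(0,1/2)\) and write
\[
 -\beta\cU(g)\pm\lambda\cU(q_u)=-(1-\delta)\beta\,\cU(g)-\beta\,\cU(k_\pm),
 \qquad k_\pm:=\delta g\mp(\lambda/\beta)q_u .
\]
Here linearity \(\cU(g)+\cU(q)=\cU(g+q)\) of the centered statistic is used. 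Then apply Hölder's inequality with exponents \(2,2\). The factor \(\E e^{-2(1-\delta)\beta\cU(g)}\) is bounded uniformly in \(N\) by Theorem~\ref{thm:uniform-partition}. The remaining factor is \(\E e^{-2\beta\,\cU_{N,\rho}(k_\pm)}\), where \(\cU_{N,\rho}(k_\pm)\) is the canonical statistic with kernel \((k_\pm)_\rho^\circ\).

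\emph{Step 3: the main obstacle, verifying Theorem~\ref{thm:canonical} for \((k_\pm)_\rho^\circ\) with \(a=2\beta\).} I would choose \(\lambda=\beta\delta/L\). Then Step 1 gives
\[
 k_\pm\ge\delta g-\delta(g+1)=-\delta .
\]
The centering subtracts \(k_\pm*\rho\) in each variable. These convolutions are bounded by \(C\delta\), because \(\abs{k_\pm}\le 2\delta(g+1)\) and \(\norm{g*\rho}_\infty\le C_\rho\) for \(s>0\). The latter holds since \(\rho\in L^1\cap L^\infty\) and \(g\) is locally integrable and bounded away from the origin. Hence \(\norm{((k_\pm)_\rho^\circ)_-}_\infty\le C_\rho\delta\). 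Taking \(\delta\) small (depending on \(\beta,\rho,d,s,m\)) makes \(2\beta C_\rho\delta\le c\).

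The \(L^2(\rho^{\otimes2})\) norm of \((k_\pm)_\rho^\circ\) is at most \(C\delta\norm{(g+1)(x-y)}_{L^2(\rho^{\otimes2})}\), which is finite exactly because \(s<d/2\). Theorem~\ref{thm:canonical} then gives a uniform bound for both signs. This yields \eqref{eq:gibbs-commutator-mgf} with \(\theta=\lambda\), using \(e^{\theta\abs x}\le e^{\theta x}+e^{-\theta x}\).

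The delicate points are the following.
\begin{itemize}
\item Checking that the \(\rho\)-marginals of \(\abs{q_u}\) are bounded (again via Step 1).
\item Handling \(q_u\) on the diagonal, which has measure zero.
\item In the log case, replacing the bound \(g*\rho\le C\) by boundedness of \(q_u\) itself.
\end{itemize}

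\emph{Step 4: averaged bound.} For \eqref{eq:average-coulomb-commutator}, apply the Donsker--Varadhan (entropy duality) inequality:
\[
 \theta\,\E_{P_N}\abs{\cU_{N,\rho}(q_u)}
 \le\Ent(P_N\mid\cP_{N,\rho}^\beta)+\log\E_{\cP_{N,\rho}^\beta}e^{\theta\abs{\cU_{N,\rho}(q_u)}} .
\]
Combining this with \eqref{eq:gibbs-commutator-mgf} and dividing by \(N\) gives the claim, with \(C=\max(1,\sup_N\log\E e^{\theta\abs{\cU}})/\theta\).
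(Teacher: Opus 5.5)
Your proposal is correct and follows essentially the paper's own argument. The ingredients are the same: the comparison \(\abs{q_u}\le C\norm{Du}_\infty(g+1)\), the splitting \(-\beta\cU_{N,\rho}(g)\pm\theta\cU_{N,\rho}(q_u)=-(\beta-\beta')\cU_{N,\rho}(g)-\cU_{N,\rho}(\beta'g\mp\theta q_u)\) with \(\beta'=\delta\beta\), Cauchy--Schwarz against \(\cZ_{N,\rho}(2(\beta-\beta');g)\), Theorem~\ref{thm:canonical} for the buffered kernel (and directly for \(\pm(q_u)_\rho^\circ\) in the bounded logarithmic case), and then Donsker--Varadhan. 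In the logarithmic case you must indeed drop the \(\delta g\) buffer, as you indicate, because \(g_{d,0}\) is not nonnegative and the bound \(k_\pm\ge-\delta\) would not hold.
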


\begin{proof}
By the definition of the centered Gibbs measure
\(\cP_{N,\rho}^{\beta}\) in \eqref{eq:centered-gibbs-measure}, for either
sign and every \(\theta>0\),
\begin{equation*}
 \E_{\cP_{N,\rho}^{\beta}}
 e^{\mathbin{\pm}\theta\cU_{N,\rho}(q_u)}
 =\frac1{\cZ_{N,\rho}(\beta;g)}
  \E_{\rho^{\otimes N}}
  e^{-\beta\cU_{N,\rho}(g)
     \mathbin{\pm}\theta\cU_{N,\rho}(q_u)}.
\end{equation*}

Assume first that \(0<s<d/2\).  For \(m=0\), homogeneity gives
\(\abs{z}\,\abs{\nabla g_{d,s}(z)}=s g_{d,s}(z)\).  For \(m>0\), differentiating
the heat representation \eqref{eq:riesz-split} and using
\(a e^{-a}\le C e^{-a/2}\) gives
\[
 \abs{z}\,\abs{\nabla g_{d,s}^{(m)}(z)}
 \le C_{d,s,m}g_{d,s}^{(m)}(z/\sqrt2)
 \le C_{d,s,m}\bigl(g_{d,s}^{(m)}(z)+1\bigr).
\]
The last inequality follows from the \(|z|^{-s}\) behavior near the origin
and boundedness away from it.  Thus, in both cases,
\begin{equation}\label{eq:riesz-commutator-comparison}
 |q_u(x,y)|
 \le C_{d,s,m}\norm{Du}_\infty\bigl(g(x-y)+1\bigr).
\end{equation}
Whenever \(\beta',\theta>0\) satisfy
\(C_{d,s,m}\theta\norm{Du}_\infty\le\beta'/2\),
\eqref{eq:riesz-commutator-comparison} and \(g\ge0\) give
\[
 \begin{aligned}
  \beta'g\pm\theta q_u
  &\ge\beta'g-\theta|q_u|\\
  &\ge\bigl(\beta'-C_{d,s,m}\theta\norm{Du}_\infty\bigr)g
      -C_{d,s,m}\theta\norm{Du}_\infty\\
  &\ge-C_{d,s,m}\theta\norm{Du}_\infty,
 \end{aligned}
\]
and hence
\[
 (\beta'g\pm\theta q_u)_-
 \le C_{d,s,m}\theta\norm{Du}_\infty,
 \qquad
 |\beta' g\pm\theta q_u|
 \le C(\beta'+\theta\norm{Du}_\infty)(g+1).
\]
It follows that
\[
 \left\|\bigl((\beta' g\pm\theta q_u)_\rho^\circ\bigr)_-\right\|_\infty
 +\norm{(\beta' g\pm\theta q_u)_\rho^\circ}_{L^2(\rho^{\otimes2})}
 \le C(\beta'+\theta\norm{Du}_\infty),
\]
where we used the fact that
\(\norm{g*\rho}_\infty+\norm{g(x-y)}_{L^2(\rho^{\otimes2})}<\infty\);
this follows from the heat kernel estimates
\eqref{eq:riesz-remainder-l1}--\eqref{eq:riesz-coarse-origin}
at \(\tau=1\).
Choose \(0<\beta'\le\beta\) sufficiently small and then \(\theta>0\) so that
\[
 C_{d,s,m}\theta\norm{Du}_\infty\le\frac{\beta'}2,
 \qquad
 2C(\beta'+\theta\norm{Du}_\infty)\le c,
\]
where \(c\) is the constant in the smallness condition
\eqref{eq:canonical-smallness}.
Thus, \eqref{eq:canonical-smallness} holds with
\(K=(\beta'g\pm\theta q_u)_\rho^\circ\) and \(a=2\).
By Theorem~\ref{thm:canonical}, we have
\begin{equation}\label{eq:buffered-commutator-bound}
 \sup_{N\ge2}\E_{\rho^{\otimes N}}
 e^{-2\cU_{N,\rho}(\beta'g\pm\theta q_u)}<\infty.
\end{equation}
By linearity of \(K\mapsto K_\rho^\circ\), we find that
\[
 -\beta\cU_{N,\rho}(g)
 \mathbin{\pm}\theta\cU_{N,\rho}(q_u)
 =-(\beta-\beta')\cU_{N,\rho}(g)
 -\cU_{N,\rho}(\beta'g\mp\theta q_u).
\]
Since \(\beta'\le\beta\), we deduce from the Cauchy--Schwarz inequality,
Theorem~\ref{thm:uniform-partition}, and
the exponential bound
\eqref{eq:buffered-commutator-bound} that
\begin{equation}\label{eq:riesz-commutator-product-bound}
 \begin{aligned}
 &\sup_{N\ge2}\E_{\rho^{\otimes N}}
 e^{-\beta\cU_{N,\rho}(g)
    \mathbin{\pm}\theta\cU_{N,\rho}(q_u)}\\
 &\quad\le
 \left(\sup_{N\ge2}
 \cZ_{N,\rho}\bigl(2(\beta-\beta');g\bigr)\right)^{1/2}
 \left(\sup_{N\ge2}\E_{\rho^{\otimes N}}
 e^{-2\cU_{N,\rho}(\beta'g\mp\theta q_u)}\right)^{1/2}
 <\infty.
 \end{aligned}
\end{equation}

Suppose now that \(s=0\).  In this case,
\[
 |q_u(x,y)|\le C_d\norm{Du}_\infty.
\]
Hence \((q_u)_\rho^\circ\) is bounded.  Choose \(\theta>0\) sufficiently
small that
the condition \eqref{eq:canonical-smallness} holds for both
\(K=\pm(q_u)_\rho^\circ\) with \(a=2\theta\).
By Theorem~\ref{thm:canonical}, we have
\begin{equation*}
 \sup_{N\ge2}\E_{\rho^{\otimes N}}
 e^{\mathbin{\pm}2\theta\cU_{N,\rho}(q_u)}<\infty.
\end{equation*}
Using the same Cauchy--Schwarz argument as in
\eqref{eq:riesz-commutator-product-bound}, we obtain
\begin{equation}\label{eq:log-commutator-product-bound}
 \sup_{N\ge2}\E_{\rho^{\otimes N}}
 e^{-\beta\cU_{N,\rho}(g_{d,0})
    \mathbin{\pm}\theta\cU_{N,\rho}(q_u)}<\infty.
\end{equation}

Using \(e^{\theta|r|}\le e^{\theta r}+e^{-\theta r}\), we deduce
\eqref{eq:gibbs-commutator-mgf} from
\eqref{eq:riesz-commutator-product-bound} when \(0<s<d/2\) and from
\eqref{eq:log-commutator-product-bound} when \(s=0\).

Applying the Donsker--Varadhan inequality to
\(\theta|\cU_{N,\rho}(q_u)|\), we obtain
\begin{equation}\label{eq:commutator-donsker-varadhan}
 \theta\E_{P_N}|\cU_{N,\rho}(q_u)|
 \le\Ent(P_N\mid\cP_{N,\rho}^{\beta})
 +\log\E_{\cP_{N,\rho}^{\beta}}
 e^{\theta|\cU_{N,\rho}(q_u)|}.
\end{equation}
By the exponential moment bound
\eqref{eq:gibbs-commutator-mgf}, the logarithmic term in the estimate
\eqref{eq:commutator-donsker-varadhan} is bounded
uniformly in \(N\).
Dividing the estimate
\eqref{eq:commutator-donsker-varadhan} by \(\theta N\), we obtain
\eqref{eq:average-coulomb-commutator}.
\end{proof}

\subsection{Quantitative propagation of chaos}

We study the first-order Coulomb system on \(\mathbb R^3\),
\begin{equation*}
 \dd X_t^{N,i}
 =-\frac1{N-1}\sum_{j\ne i}\nabla g(X_t^{N,i}-X_t^{N,j})\dd t
   +\sqrt{\frac2\beta}\dd B_t^i,
 \qquad 1\le i\le N.
\end{equation*}
Here \(g=g_{3,1}=(4\pi|x|)^{-1}\), and
\(B^1,\ldots,B^N\) are independent standard Brownian motions in
\(\mathbb R^3\).
The associated \(N\)-particle Fokker--Planck equation is
\begin{equation}\label{eq:first-order-coulomb-fokker-planck}
 \partial_tF^N
 =\frac1\beta\sum_{i=1}^N\Delta_{x_i}F^N
 +\frac1{N-1}\sum_{i\ne j}\nabla_{x_i}\!\cdot
 \bigl(F^N\nabla g(x_i-x_j)\bigr).
\end{equation}
Formally, as \(N\to\infty\), the empirical measure converges to a solution of
the nonlinear Fokker--Planck equation
\begin{equation}\label{eq:first-order-coulomb-pde}
 \partial_t\rho_t
 -\nabla\!\cdot\bigl(\rho_t\nabla(g*\rho_t)\bigr)
 =\frac1\beta\Delta\rho_t.
\end{equation}

Set
\[
 G_N(X^{1:N})
 :=\exp\left[-\frac{\beta}{2(N-1)}
       \sum_{i\ne j}g(x_i-x_j)\right].
\]

We use the following notion of entropy solution.

\begin{definition}[Entropy solution]
\label{def:coulomb-entropy-solution}
For \(T>0\), a symmetric probability density
\(F^N\in L^\infty(0,T;L^1(\mathbb R^{3N}))\) is an entropy solution on \([0,T]\)
if it solves \eqref{eq:first-order-coulomb-fokker-planck} in the sense of
distributions and satisfies
\[
 \sup_{0\le t\le T}\frac1N
 \int\sum_{i=1}^N(1+|x_i|^2)F^N(t,X^{1:N})\dd X^{1:N}<\infty,
\]
and, for almost every \(t\in[0,T]\),
\begin{align}
 &\int F^N(t)\log\frac{F^N(t)}{G_N}\dd X^{1:N}
 +\frac1\beta\int_0^t\!\int F^N(r)
   \left|\nabla\log\frac{F^N(r)}{G_N}\right|^2
   \dd X^{1:N}\dd r                                                \notag\\
 &\le \int F^N(0)\log\frac{F^N(0)}{G_N}\dd X^{1:N}<\infty.
 \label{eq:particle-free-energy-dissipation}
\end{align}
\end{definition}

For a family of probability densities \((\rho_t)_{0\le t\le T}\) and an
entropy solution \(F^N\), define the modulated free energy by
\[
 \mathcal E_N(t)
 :=\frac1N\Ent(F^N(t)\mid\rho_t^{\otimes N})
 +\frac\beta N\E_{F^N(t)}\cU_{N,\rho_t}(g).
\]
To express \(\mathcal E_N\) as a weighted relative entropy, set
\[
 G_{\rho_t,N}(X^{1:N})
 :=\exp\left[-\beta\sum_{i=1}^N(g*\rho_t)(x_i)
       +\frac{\beta N}{2}\iint g(x-y)\rho_t(x)\rho_t(y)\dd x\dd y
       \right].
\]
Then
\begin{equation}\label{eq:modulated-weight-identity}
 \mathcal E_N(t)
 =\frac1N\int F^N(t)
 \log\left(
 \frac{F^N(t)/G_N}{\rho_t^{\otimes N}/G_{\rho_t,N}}
 \right)\dd X^{1:N}.
\end{equation}

\begin{theorem}
\label{thm:optimal-modulated-free-energy}
Let \((\rho_t)_{0\le t\le T}\) be a classical solution in probability densities
of \eqref{eq:first-order-coulomb-pde}, and set
\(\psi_t:=\log\rho_t+\beta g*\rho_t\).  Suppose that
\[
 \sup_{0\le t\le T}\left(
 \norm{\rho_t}_{L^\infty}
 +\norm{D^2\psi_t}_{L^\infty}
 \right)<\infty.
\]
Let \(F^N\) be an entropy solution in the sense of
Definition~\ref{def:coulomb-entropy-solution}.  There exists \(C_T\ge1\),
independent of \(N\), such that, for every \(0\le t\le T\),
\begin{equation}\label{eq:optimal-modulated-rate}
 \frac1{2N}\Ent(F^N(t)\mid\rho_t^{\otimes N})-\frac{C_T}{N}
 \le\mathcal E_N(t)
 \le e^{C_T t}\mathcal E_N(0)+\frac{C_T}{N}.
\end{equation}
If \(F^N(0)=\rho_0^{\otimes N}\), then
\begin{equation}\label{eq:optimal-product-modulated-rate}
 \sup_{0\le t\le T}\left(
 \abs{\mathcal E_N(t)}
 +\frac1N\Ent(F^N(t)\mid\rho_t^{\otimes N})
 \right)
 \le\frac{C_T}{N}.
\end{equation}
\end{theorem}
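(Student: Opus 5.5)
We argue by the modulated free energy method of Bresch--Jabin--Wang, combined with the commutator bound of Proposition~\ref{prop:gibbs-commutator-exponential}.

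\textbf{Upper bound.} The starting point is the dissipation identity for $\mathcal E_N$, written in the weighted form \eqref{eq:modulated-weight-identity}. Take the free energy inequality \eqref{eq:particle-free-energy-dissipation} for $F^N$ relative to $G_N$. Subtract the time derivative of $\frac1N\int F^N\log(\rho_t^{\otimes N}/G_{\rho_t,N})$, computed from the limit equation \eqref{eq:first-order-coulomb-pde}. This gives the adapted identity (Proposition~\ref{prop:coulomb-modulated-identity}):
\[
 \mathcal E_N(t)+\frac1{\beta N}\int_0^t\!\int F^N\Big|\nabla\log\frac{F^N}{\rho^{\otimes N}}+\beta\nabla(\cdots)\Big|^2
 \le\mathcal E_N(0)-\frac1N\int_0^t\E_{F^N}\Big[\cU_{N,\rho_r}(q_{u_r})+\text{(Hessian term)}\Big]\dd r .
\]
Here the velocity field is $u_r=-\frac1\beta\nabla\psi_r$ (up to sign and constant), so that $\norm{Du_r}_\infty\lesssim\norm{D^2\psi_r}_\infty$. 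The commutator is exactly $\cU_{N,\rho_r}(q_{u_r})$ with $q_u$ as in Proposition~\ref{prop:gibbs-commutator-exponential}. The remaining Hessian/linear term is $\frac1N\E_{F^N}\sum_i\big(\mathrm{tr}\,Du(x_i)-\int \mathrm{tr}\,Du\,\rho\big)$-type. It is handled by Donsker--Varadhan against $\rho_r^{\otimes N}$ together with Hoeffding-type concentration for bounded linear statistics, which gives $\le C(\frac1N\Ent(F^N\mid\rho^{\otimes N})+\frac1N)$.

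For the commutator we apply \eqref{eq:average-coulomb-commutator} with $P_N=F^N(r)$. The key observation is the identity
\[
 \Ent(F^N\mid\cP^{\beta}_{N,\rho})=\Ent(F^N\mid\rho^{\otimes N})+\beta\E_{F^N}\cU_{N,\rho}(g)+\log\cZ_{N,\rho}(\beta;g)=N\mathcal E_N+\log\cZ_{N,\rho}(\beta;g).
\]
It follows that $\frac1N\E|\cU(q_u)|\le C(\mathcal E_N+\frac1N)$, using Theorem~\ref{thm:uniform-partition} uniformly in $r\in[0,T]$. The constants depend only on the uniform bounds on $\norm{\rho_r}_\infty$ and $\norm{D^2\psi_r}_\infty$, and on the tail of $\rho_r$ (needed for $s=1$ via the heat decomposition; for $s=1$ no tail assumption is required).

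\textbf{Lower (coercivity) bound.} Write $\Ent(F\mid\rho^{\otimes N})=\Ent(F\mid\cP^{\beta'}_{N,\rho})+\beta'\E_F\cU(g)-\log\cZ(\beta')$. With $\beta'=2\beta$ this yields
\[
 \frac1N\Ent\le 2\mathcal E_N-\frac{\beta}{N}\cdots .
\]
More precisely, $\mathcal E_N=\frac1{2N}\Ent+\frac12\big(\frac1N\Ent+\frac{2\beta}N\E\cU\big)\ge\frac1{2N}\Ent-\frac1{2N}\log\cZ_{N,\rho}(2\beta;g)$, by nonnegativity of $\Ent(F\mid\cP^{2\beta})$. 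Theorem~\ref{thm:uniform-partition} then gives the left inequality in \eqref{eq:optimal-modulated-rate}.

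\textbf{Closing the argument.} Since $\frac1N\Ent\le2\mathcal E_N+C/N$, all error terms are bounded by $C(\mathcal E_N+1/N)$, and Gronwall yields the right inequality in \eqref{eq:optimal-modulated-rate}. For product data, $\mathcal E_N(0)=\frac\beta N\E_{\rho_0^{\otimes N}}\cU=0$, and both bounds give \eqref{eq:optimal-product-modulated-rate}.

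\textbf{Main obstacle.} The hardest step is justifying the modulated identity for entropy solutions of the singular Coulomb system. The route is to use the dissipation inequality only (never an equality) and to test against the smooth weight $\rho_t^{\otimes N}/G_{\rho_t,N}$. The regularity assumptions on $\psi_t$ and $\rho_t$, together with the second moment bound, make all pairings finite. A second difficulty is keeping the linear-in-$\E\cU$ remainder terms exactly cancelled so that only the commutator survives.
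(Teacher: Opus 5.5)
Your skeleton matches the paper's proof. You use the modulated free-energy inequality of Proposition~\ref{prop:coulomb-modulated-identity}. You feed the identity $\Ent(F^N(t)\mid\cP^{\beta}_{N,\rho_t})=N\mathcal E_N(t)+\log\cZ_{N,\rho_t}(\beta;g)$ into \eqref{eq:average-coulomb-commutator} with $u=\nabla\psi_t$. You get coercivity from $\Ent(F^N\mid\cP^{2\beta}_{N,\rho_t})\ge0$, which is the paper's Donsker--Varadhan step at $2\beta$ in other words, and you close with Gronwall.

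The one wrong step is your ``Hessian/linear term''. No such term exists. Since $\nabla_{x_i}\log(\rho_t^{\otimes N}/G_{\rho_t,N})=\nabla\psi_t(x_i)$ and $\partial_t\rho_t=\beta^{-1}\nabla\cdot(\rho_t\nabla\psi_t)$, one has $\partial_t\log\rho_t=\beta^{-1}(\Delta\psi_t+\abs{\nabla\psi_t}^2)-\nabla(g*\rho_t)\cdot\nabla\psi_t$. The contribution $\beta^{-1}\sum_i(\Delta\psi_t+\abs{\nabla\psi_t}^2)(x_i)$ from the weighted generator $\frac1{\beta G_N}\nabla\cdot(G_N\nabla\,\cdot\,)$ therefore cancels exactly. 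The residual is precisely $\cU_{N,\rho_t}(q_{\nabla\psi_t})$, which is why \eqref{eq:BJW-inequality} carries no other error term.

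This matters because your proposed bound for that term is false, and the $N^{-1}$ rate would be lost if the term were really present. Take bounded $\phi$ and $S=\sum_i(\phi(x_i)-\langle\rho,\phi\rangle)$. Donsker--Varadhan plus Hoeffding give $\E_F S\le\lambda^{-1}\Ent(F\mid\rho^{\otimes N})+\tfrac12N\lambda\norm{\phi}_\infty^2$. Optimizing over $\lambda$ yields only $\frac1N\abs{\E_FS}\le\norm{\phi}_\infty\sqrt{2\Ent(F\mid\rho^{\otimes N})/N}$, not $C\bigl(\frac1N\Ent+\frac1N\bigr)$. This is sharp: for centered bounded $\phi$, the law $F=(\rho(1+\varepsilon\phi/\sqrt N))^{\otimes N}$ has $O(1)$ entropy relative to $\rho^{\otimes N}$, yet $\E_FS=\varepsilon\sqrt N\int\phi^2\rho$. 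Compare Lemma~\ref{lem:linear-fluctuation-moment}, which gives exactly $\E\abs{S}\lesssim\sqrt N$. A source of size $\sqrt{\mathcal E_N}$ in the Gronwall step would only yield $\mathcal E_N=O(1)$.

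So delete that step rather than trying to repair it; with it removed, the rest of your argument goes through exactly as in the paper. A minor correction: your remark on tails should say that the tail condition \eqref{eq:log-tail-assumption} is needed only when $s=0$. For $g_{3,1}$, only $\sup_t\norm{\rho_t}_\infty$ and $\sup_t\norm{D^2\psi_t}_\infty$ enter the constants.
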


We first derive the evolution inequality for \(\mathcal E_N\).

\begin{proposition}
\label{prop:coulomb-modulated-identity}
Suppose that \((\rho_t)_{0\le t\le T}\) and \(F^N\) satisfy the assumptions
of Theorem~\ref{thm:optimal-modulated-free-energy}.  Then
\begin{align}
 \mathcal E_N(t)
 &+\frac1{\beta N}\int_0^t\!\int F^N(r)
 \left|\nabla_{X^{1:N}}\log\left(
 \frac{F^N(r)/G_N}{\rho_r^{\otimes N}/G_{\rho_r,N}}
 \right)\right|^2\dd X^{1:N}\dd r                       \notag\\
 &\le\mathcal E_N(0)
 -\frac1N\int_0^t
  \E_{F^N(r)}\cU_{N,\rho_r}\!\left(
  q_{\nabla\psi_r}\right)\dd r,
 \qquad 0\le t\le T,                                   \label{eq:BJW-inequality}
\end{align}
where
\(q_{\nabla\psi_r}(x,y):=(\nabla\psi_r(x)-\nabla\psi_r(y))
\cdot\nabla g(x-y)\).
\end{proposition}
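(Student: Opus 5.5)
The plan is to follow the Bresch--Jabin--Wang computation: differentiate the weighted relative entropy \eqref{eq:modulated-weight-identity} in time. We use the free energy dissipation \eqref{eq:particle-free-energy-dissipation} for the particle part and the PDE \eqref{eq:first-order-coulomb-pde} for the reference part.

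First rewrite \(N\mathcal E_N(t)\) as
\[
 \int F^N\log\frac{F^N}{G_N}\dd X^{1:N}
 -\int F^N\log\frac{\rho_t^{\otimes N}}{G_{\rho_t,N}}\dd X^{1:N}.
\]
The first term is controlled by \eqref{eq:particle-free-energy-dissipation}. For the second term, set
\[
 \Psi_t(X^{1:N}):=\log\frac{\rho_t^{\otimes N}}{G_{\rho_t,N}}
 =\sum_i\psi_t(x_i)-\frac{\beta N}{2}\iint g\rho_t\rho_t .
\]
This is a smooth test function with at most quadratic growth, since \(D^2\psi_t\) is bounded. We therefore test the Liouville equation \eqref{eq:first-order-coulomb-fokker-planck} against \(\Psi_t\) and add the explicit term \(\int F^N\partial_t\Psi_t\). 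The second-moment bound in Definition~\ref{def:coulomb-entropy-solution} justifies the use of \(\Psi_t\) after a cutoff \(\chi(x/R)\), \(R\to\infty\).

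\emph{Step 1: the particle test.} Integration by parts gives
\[
 \frac{\dd}{\dd t}\int F^N\Psi_t
 =\int F^N\Bigl[\partial_t\Psi_t+\frac1\beta\sum_i\Delta\psi_t(x_i)
 -\frac1{N-1}\sum_{i\ne j}\nabla g(x_i-x_j)\cdot\nabla\psi_t(x_i)\Bigr].
\]
Symmetrizing the double sum produces \(\frac1{2(N-1)}\sum_{i\ne j}q_{\nabla\psi_t}(x_i,x_j)\).

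\emph{Step 2: the limit equation.} The PDE can be written as \(\partial_t\rho_t=\frac1\beta\nabla\cdot(\rho_t\nabla\psi_t)\). From this we compute \(\partial_t\psi_t=\frac1\beta\Delta\psi_t+\frac1\beta\nabla\log\rho_t\cdot\nabla\psi_t+\beta g*\partial_t\rho_t\). We then complete the square. The dissipation in \eqref{eq:particle-free-energy-dissipation} is
\[
 \frac1\beta\int F^N\abs{\nabla\log(F^N/G_N)}^2 .
\]
Writing \(\nabla\log(F^N/G_N)=\nabla\log\frac{F^N/G_N}{\rho^{\otimes N}/G_{\rho,N}}+\nabla\Psi_t\) and expanding, the cross terms combine with Step 1. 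Also \(\int F^N\nabla\log F^N\cdot\nabla\Psi=-\int F^N\Delta\Psi\). The goal is to collect all remaining terms into the exact modulated form
\[
 -\frac1{N-1}\sum_{i<j}\bigl(q_{\nabla\psi}\bigr)^\circ_{\rho_t}(x_i,x_j).
\]
The one-particle terms \((\nabla g*\rho)\cdot\nabla\psi\) and \(g*\partial_t\rho\) supply precisely the centering corrections. This uses the identity \(\int q_{\nabla\psi}(x,y)\rho(y)\dd y=\nabla\psi\cdot\nabla g*\rho-\nabla g*(\rho\nabla\psi)\) and \(\beta\,g*\partial_t\rho=\nabla g*(\rho\nabla\psi)\). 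Integrating in time then gives \eqref{eq:BJW-inequality}, with the inequality coming only from \eqref{eq:particle-free-energy-dissipation}.

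\textbf{Main obstacle.} The rigorous bookkeeping for a weak entropy solution is the hard part: we only have the inequality \eqref{eq:particle-free-energy-dissipation}, not an identity. Because of this, the expansion of the square and the testing by \(\Psi_t\) must be done on the equation itself, not on the entropy. A further issue is that the cross term \(\int F^N\nabla\log(F^N/G_N)\cdot\nabla\Psi\) must be finite, which follows from Cauchy--Schwarz with the finite dissipation and \(\abs{\nabla\psi_t}\le C(1+\abs x)\). The singular interaction is harmless here, because \(\nabla g\) enters only through \(q_{\nabla\psi}\), which for \(g_{3,1}\) is bounded by \(C\norm{D^2\psi}_\infty g\), and \(\E_{F^N}g\) is finite by the entropy bound. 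I would do the algebra for smooth data first and then pass to the limit via the cutoff and lower semicontinuity of the Fisher-type term.
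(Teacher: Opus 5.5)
Your proposal is correct and is essentially the paper's argument. The algebra you describe (cancellation of the $\frac1\beta\Delta\psi_t$ terms, the identity $\frac1\beta|\nabla\psi_t|^2-\frac1\beta\nabla\log\rho_t\cdot\nabla\psi_t=\nabla\psi_t\cdot\nabla(g*\rho_t)$, and the one-particle and constant terms supplying the centering of $q_{\nabla\psi_t}$) is the paper's residual identity for $\rho_t^{\otimes N}/G_{\rho_t,N}$, only grouped as the free energy of $F^N/G_N$ minus the linear functional $\int F^N\Psi_t$. That grouping is exactly the Bresch--Jabin--Wang argument the paper cites for entropy solutions, and it closes directly from the entropy inequality together with testing the equation against $\Psi_t\chi_R$. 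Your closing remark about smooth approximation and lower semicontinuity is therefore unnecessary, and that route would also require knowing that the given entropy solution is a limit of smooth solutions.
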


\begin{proof}
We first assume that \(g\) is smooth and even and that \(F^N\) and \(\rho_t\)
are smooth and positive.  Since \(G_N\) is independent of
time, the \(N\)-particle Fokker--Planck equation
\eqref{eq:first-order-coulomb-fokker-planck} gives
\begin{equation}\label{eq:particle-weighted-equation}
 \partial_t\left(\frac{F^N}{G_N}\right)
 =\frac1{\beta G_N}\nabla_{X^{1:N}}\!\cdot
   \left(G_N\nabla_{X^{1:N}}\frac{F^N}{G_N}\right).
\end{equation}
To compare \(F^N/G_N\) with
\(\rho_t^{\otimes N}/G_{\rho_t,N}\), we substitute
\(\rho_t^{\otimes N}/G_{\rho_t,N}\) into the weighted equation
\eqref{eq:particle-weighted-equation}.  The definition of \(\psi_t\) rewrites
the nonlinear Fokker--Planck equation
\eqref{eq:first-order-coulomb-pde} as
\(\partial_t\rho_t=\beta^{-1}\nabla\!\cdot(\rho_t\nabla\psi_t)\).
The definitions of \(G_N\) and \(G_{\rho_t,N}\) then give
\begin{equation}\label{eq:reference-spatial-identities}
\begin{aligned}
 \nabla_{x_i}\log\left(
 \frac{\rho_t^{\otimes N}}{G_{\rho_t,N}}\right)
 =\nabla\psi_t(x_i),
 \qquad
 \frac1\beta\nabla_{x_i}\log G_N
 &=-\frac1{N-1}\sum_{j\ne i}\nabla g(x_i-x_j),\\
 \frac1\beta\nabla_{x_i}\log G_{\rho_t,N}
 &=-\nabla(g*\rho_t)(x_i).
\end{aligned}
\end{equation}
Differentiating \(G_{\rho_t,N}\) in time and using
\eqref{eq:first-order-coulomb-pde}, we obtain
\begin{equation}\label{eq:reference-time-identity}
\begin{aligned}
 \partial_t\log G_{\rho_t,N}
 &=-\sum_{i=1}^N\int
   \nabla g(x_i-y)\cdot\nabla\psi_t(y)\rho_t(y)\dd y\\
 &\quad-N\int
   \nabla(g*\rho_t)(x)\cdot\nabla\psi_t(x)\rho_t(x)\dd x.
\end{aligned}
\end{equation}
Substituting \eqref{eq:reference-spatial-identities} and
\eqref{eq:reference-time-identity} into
\eqref{eq:particle-weighted-equation}, and using the oddness of \(\nabla g\),
we obtain the following identity:
\begin{align}
 &\frac{G_{\rho_t,N}}{\rho_t^{\otimes N}}
 \left\{
 \partial_t\left(\frac{\rho_t^{\otimes N}}{G_{\rho_t,N}}\right)
 -\frac1{\beta G_N}\nabla_{X^{1:N}}\!\cdot
 \left[G_N\nabla_{X^{1:N}}
 \left(\frac{\rho_t^{\otimes N}}{G_{\rho_t,N}}\right)\right]
 \right\}                                                     \notag\\
 &=\frac1{2(N-1)}\sum_{i\ne j}
   q_{\nabla\psi_t}(x_i,x_j)
   -\sum_{i=1}^N\int q_{\nabla\psi_t}(x_i,y)\rho_t(y)\dd y \notag\\
 &\quad+\frac N2\iint q_{\nabla\psi_t}(x,y)
   \rho_t(x)\rho_t(y)\dd x\dd y
 =\cU_{N,\rho_t}(q_{\nabla\psi_t}).
 \label{eq:reference-weighted-residual}
\end{align}
Differentiating the weighted representation
\eqref{eq:modulated-weight-identity} of the modulated free energy,
substituting the weighted equation
\eqref{eq:particle-weighted-equation} and the residual identity
\eqref{eq:reference-weighted-residual}, and integrating by parts, we obtain
\begin{equation}\label{eq:smooth-modulated-free-energy-identity}
 \frac{\dd}{\dd t}\mathcal E_N(t)
 =-\frac1{\beta N}\int F^N
   \left|\nabla_{X^{1:N}}\log\left(
   \frac{F^N/G_N}{\rho_t^{\otimes N}/G_{\rho_t,N}}
   \right)\right|^2\dd X^{1:N}
  -\frac1N\E_{F^N}\cU_{N,\rho_t}(q_{\nabla\psi_t}).
\end{equation}
Integrating \eqref{eq:smooth-modulated-free-energy-identity} in time, we obtain
\eqref{eq:BJW-inequality} with equality.

For an entropy solution, the modulated free energy inequality
\eqref{eq:BJW-inequality} follows from the weak formulation and the
dissipation estimate \eqref{eq:particle-free-energy-dissipation} in
Definition~\ref{def:coulomb-entropy-solution}; see
\cite[Proposition~2.3]{BreschJabinWang2023} for the analogous argument in the
attractive logarithmic-kernel case.
\end{proof}

\begin{proof}[Proof of Theorem~\ref{thm:optimal-modulated-free-energy}]
Throughout the proof, \(C_T\) denotes a constant independent of \(N\) and
\(t\), whose value may change from line to line.  Using the uniform
\(L^\infty\) bound on \(\rho_t\) and
Theorem~\ref{thm:uniform-partition} at \(\beta\) and \(2\beta\), we obtain
\[
 \sup_{N\ge2}\sup_{0\le t\le T}
 \bigl(\log\cZ_{N,\rho_t}(\beta;g)
       +\log\cZ_{N,\rho_t}(2\beta;g)+1\bigr)\le C_T.
\]

We first compare the modulated free energy with the relative entropy.
Applying the Donsker--Varadhan inequality to
\(-2\beta\cU_{N,\rho_t}(g)\), we obtain
\[
 -2\beta\E_{F^N(t)}\cU_{N,\rho_t}(g)
 \le\Ent(F^N(t)\mid\rho_t^{\otimes N})
   +\log\cZ_{N,\rho_t}(2\beta;g).
\]
Consequently,
\begin{equation}\label{eq:dynamic-entropy-coercivity}
 \mathcal E_N(t)
 \ge\frac1{2N}\Ent(F^N(t)\mid\rho_t^{\otimes N})
 -\frac{\log\cZ_{N,\rho_t}(2\beta;g)}{2N}
 \ge\frac1{2N}\Ent(F^N(t)\mid\rho_t^{\otimes N})-\frac{C_T}{N}.
\end{equation}
In particular, \(\mathcal E_N(t)+C_T/N\ge0\).

We next estimate the error term in the evolution inequality
\eqref{eq:BJW-inequality}.  By the definition of the centered Gibbs measure
\(\cP_{N,\rho_t}^{\beta}\) in \eqref{eq:centered-gibbs-measure},
\begin{equation}\label{eq:dynamic-entropy-gibbs-identity}
 \frac1N\Ent(F^N(t)\mid\cP_{N,\rho_t}^{\beta})
 =\mathcal E_N(t)
 +\frac{\log\cZ_{N,\rho_t}(\beta;g)}N.
\end{equation}
The bound on \(D^2\psi_t\) allows us to apply
Proposition~\ref{prop:gibbs-commutator-exponential}, specifically
\eqref{eq:average-coulomb-commutator}, with \(u=\nabla\psi_t\).  Using the
entropy--Gibbs identity \eqref{eq:dynamic-entropy-gibbs-identity}, we obtain
\begin{equation}\label{eq:dynamic-commutator-control}
 \frac1N\left|\E_{F^N(t)}\cU_{N,\rho_t}\!\left(
 q_{\nabla\psi_t}\right)\right|
 \le\frac1N\E_{F^N(t)}\left|\cU_{N,\rho_t}\!\left(
 q_{\nabla\psi_t}\right)\right|
 \le C_T\left(\mathcal E_N(t)+\frac{C_T}{N}\right).
\end{equation}

Substituting the commutator estimate
\eqref{eq:dynamic-commutator-control} into the modulated free energy
inequality \eqref{eq:BJW-inequality} and dropping the nonnegative
dissipation term, we find
\[
 \mathcal E_N(t)
 \le\mathcal E_N(0)
 +C_T\int_0^t\left(\mathcal E_N(r)+\frac{C_T}{N}\right)\dd r.
\]
Applying Gronwall's inequality, we obtain
\begin{equation}\label{eq:optimal-M-rate}
 \mathcal E_N(t)
 \le e^{C_T t}\mathcal E_N(0)+\frac{C_T}{N}.
\end{equation}
Combining \eqref{eq:optimal-M-rate} with the coercivity bound
\eqref{eq:dynamic-entropy-coercivity} proves
\eqref{eq:optimal-modulated-rate}.

If \(F^N(0)=\rho_0^{\otimes N}\), then
\(\mathcal E_N(0)=0\) because
\(\E_{\rho_0^{\otimes N}}\cU_{N,\rho_0}(g)=0\).  Hence
\eqref{eq:optimal-product-modulated-rate} follows from
\eqref{eq:optimal-modulated-rate}.
\end{proof}

\section{Application III: Quantitative Gaussian fluctuations}
\label{sec:gaussian-fluctuations}

In this section, we use a uniform partition-function estimate to control
the nonlinear term in the fluctuation equation and prove a quantitative
Gaussian fluctuation result.

We consider the first-order mean-field diffusion for any kernel
\(g=g_{d,s}^{(m)}\) covered by Theorem~\ref{thm:uniform-partition}.  The
three-dimensional Coulomb model treated in
Section~\ref{sec:optimal-modulated-free-energy} is recovered by taking
\(g=g_{3,1}\).  Fix \(\beta>0\).  For \(N\ge2\), let
\(X^{N,1},\ldots,X^{N,N}\) solve
\begin{equation}\label{eq:fluctuation-particle-system}
 \dd X_t^{N,i}
 =-\frac1{N-1}\sum_{j\ne i}\nabla g(X_t^{N,i}-X_t^{N,j})\dd t
   +\sqrt{\frac2\beta}\dd B_t^i,
 \qquad 1\le i\le N,
\end{equation}
where \(B^1,\ldots,B^N\) are independent standard Brownian motions in
\(\mathbb R^d\).  Fix \(T>0\), and let \((\rho_t)_{0\le t\le T}\) be a probability
density solution of the corresponding mean-field equation
\begin{equation}\label{eq:fluctuation-limit-equation}
 \partial_t\rho_t
 -\nabla\!\cdot\bigl(\rho_t\nabla(g*\rho_t)\bigr)
 =\frac1\beta\Delta\rho_t.
\end{equation}
We assume that \(X_0^{N,1},\ldots,X_0^{N,N}\) are independent with common
density \(\rho_0\) and are independent of the Brownian motions.

Define the empirical measure and the fluctuation measure by
\[
 \mu_t^N:=\frac1N\sum_{i=1}^N\delta_{X_t^{N,i}},
 \qquad
 \eta_t^N:=\sqrt N\,(\mu_t^N-\rho_t).
\]
As in Section~\ref{sec:optimal-modulated-free-energy}, write
\[
 q_u(x,y)=(u(x)-u(y))\cdot\nabla g(x-y).
\]
For \(\varphi\in C_c^\infty(\mathbb R^d)\), applying It\^o's formula to
\eqref{eq:fluctuation-particle-system} and then symmetrizing, we obtain
\begin{align}
 \dd\langle\eta_t^N,\varphi\rangle
 &=\frac1\beta\langle\eta_t^N,\Delta\varphi\rangle\dd t
 -\langle\eta_t^N,
   \nabla\varphi\cdot\nabla(g*\rho_t)\rangle\dd t
 -\langle\rho_t,
   \nabla\varphi\cdot(\nabla g*\eta_t^N)\rangle\dd t
 \notag\\
 &\quad-\frac1{2\sqrt N}\iint_{x\ne y}q_{\nabla\varphi}(x,y)
   \eta_t^N(\dd x)\eta_t^N(\dd y)\dd t
 \notag\\
 &\quad-\frac1{\sqrt N\,N(N-1)}\sum_{i<j}
   q_{\nabla\varphi}(X_t^{N,i},X_t^{N,j})\dd t
 +\dd M_{N,t}^{\varphi},                              \label{eq:finite-fluctuation-equation}
\end{align}
The correction term in the last line of
\eqref{eq:finite-fluctuation-equation} comes from the normalization
\(1/(N-1)\) in \eqref{eq:fluctuation-particle-system}, rather than
\(1/N\).  We denote the martingale term by
\[
 M_{N,t}^{\varphi}
 :=\sqrt{\frac2{\beta N}}\sum_{i=1}^N
   \int_0^t\nabla\varphi(X_r^{N,i})\cdot\dd B_r^i.
\]

These two drift terms form the nonlinear remainder, which can be written in
terms of the centered commutator associated with \(\nabla\varphi\) as
\begin{align*}
 &-\frac1{2\sqrt N}\iint_{x\ne y}q_{\nabla\varphi}(x,y)
   \eta_t^N(\dd x)\eta_t^N(\dd y)
   \dd t
   -\frac1{\sqrt N\,N(N-1)}\sum_{i<j}
   q_{\nabla\varphi}(X_t^{N,i},X_t^{N,j})\dd t\\
 &\qquad=-\frac1{\sqrt N\,(N-1)}\sum_{i<j}
   (q_{\nabla\varphi})_{\rho_t}^{\circ}
   (X_t^{N,i},X_t^{N,j})\dd t\\
 &\qquad=-\frac1{\sqrt N}
   \cU_{N,\rho_t}(q_{\nabla\varphi})\dd t.
\end{align*}
To estimate this commutator, let \(F^N(t)\) be the density of the joint law of
\(X_t^{N,1},\ldots,X_t^{N,N}\), and set
\[
 \psi_t:=\log\rho_t+\beta g*\rho_t,
 \qquad
 \mathcal E_N(t)
 :=\frac1N\Ent(F^N(t)\mid\rho_t^{\otimes N})
   +\frac\beta N\E_{F^N(t)}\cU_{N,\rho_t}(g).
\]

\begin{lemma}
\label{lem:dynamic-fluctuation-estimates}
Suppose that \(\rho_t\) is a positive classical solution of
\eqref{eq:fluctuation-limit-equation} and that
\[
 \sup_{0\le t\le T}\left(
 \norm{\rho_t}_{L^\infty}+\norm{D^2\psi_t}_{L^\infty}
 \right)<\infty.
\]
When \(s=0\), suppose in addition that, for some \(a,q>0\),
\[
 \sup_{0\le t\le T}\int_{\mathbb R^d}e^{a|x|^q}\rho_t(x)\dd x<\infty.
\]
Suppose that \(F^N\) is an entropy solution in the sense of
Definition~\ref{def:coulomb-entropy-solution}, with \(\mathbb R^3\) and
\(g_{3,1}\) replaced by \(\mathbb R^d\) and \(g\), respectively.  Then the following
estimates hold.
\begin{enumerate}
\item[(i)] For every family of globally Lipschitz vector fields
\((u_t)_{0\le t\le T}\) with
\(\sup_t\norm{Du_t}_\infty<\infty\),
\begin{equation}\label{eq:uniform-nonlinear-fluctuation-bound}
 \sup_{N\ge2}\sup_{0\le t\le T}
 \E_{F^N(t)}\abs{\cU_{N,\rho_t}(q_{u_t})}<\infty.
\end{equation}
\item[(ii)] The modulated free energy satisfies
\begin{equation}\label{eq:general-modulated-free-energy-inequality}
 \mathcal E_N(t)
 \le \mathcal E_N(0)
 -\frac1N\int_0^t
   \E_{F^N(r)}\cU_{N,\rho_r}(q_{\nabla\psi_r})\dd r,
 \qquad 0\le t\le T.
\end{equation}
\end{enumerate}
\end{lemma}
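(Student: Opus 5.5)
Part~(ii) comes first, since part~(i) depends on it.  Proposition~\ref{prop:coulomb-modulated-identity} was stated for \(g_{3,1}\) on \(\mathbb R^3\), but its proof never used the specific kernel.  It used only that \(g\) is even, so that \(\nabla g\) is odd and the symmetrization yields \(\cU_{N,\rho_t}(q_{\nabla\psi_t})\).  It also used the identities \eqref{eq:reference-spatial-identities}--\eqref{eq:reference-time-identity} and the weighted equation \eqref{eq:particle-weighted-equation}.  Hence the smooth computation leading to \eqref{eq:smooth-modulated-free-energy-identity} carries over verbatim to \(g=g_{d,s}^{(m)}\) on \(\mathbb R^d\).

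The passage to entropy solutions would follow the same approximation as in \cite[Proposition~2.3]{BreschJabinWang2023}: regularize \(g\), pass to the limit using the dissipation bound \eqref{eq:particle-free-energy-dissipation}, and use lower semicontinuity of the Fisher-information term.  Dropping the nonnegative dissipation then gives \eqref{eq:general-modulated-free-energy-inequality}.

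For part~(i) I would repeat the proof of Theorem~\ref{thm:optimal-modulated-free-energy} for a general kernel.  The first step is a uniform constant
\[
\sup_{t\le T}\sup_{N\ge2}\log\cZ_{N,\rho_t}(2\beta;g)\le C_T.
\]
Theorem~\ref{thm:uniform-partition} is stated for a fixed \(\rho\), but its constants depend on \(\rho\) only through \(\norm{\rho}_\infty\).  In the case \(s=0\) they also depend on the tail constants in \eqref{eq:log-tail-assumption}, and the extra hypothesis on \(\rho_t\) supplies exactly these uniformly in \(t\).  I would check this dependence in \eqref{eq:log-diagonal-cost} and Proposition~\ref{prop:coarse-fine-estimate}.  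The small-\(\beta\) part of that proof uses \(\norm{g_\rho^\circ}_{L^2(\rho^{\otimes 2})}\), which is controlled by the same quantities.

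The Donsker--Varadhan coercivity argument then gives \eqref{eq:dynamic-entropy-coercivity}, that is, \(\mathcal E_N(t)+C_T/N\ge0\).  Applying Proposition~\ref{prop:gibbs-commutator-exponential} with \(u=\nabla\psi_t\) controls the commutator, and Gronwall's inequality yields \(\mathcal E_N(t)\le e^{C_Tt}\mathcal E_N(0)+C_T/N\).  For product initial data \(\mathcal E_N(0)=0\).

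Now the identity \eqref{eq:dynamic-entropy-gibbs-identity} gives
\[
\Ent\bigl(F^N(t)\mid\cP^\beta_{N,\rho_t}\bigr)=N\mathcal E_N(t)+\log\cZ_{N,\rho_t}(\beta;g)\le C_T,
\]
uniformly in \(N\) and \(t\).  Finally I would apply \eqref{eq:average-coulomb-commutator} with \(u=u_t\) and \(P_N=F^N(t)\), which gives \(\E_{F^N(t)}\abs{\cU_{N,\rho_t}(q_{u_t})}\le C\bigl(\Ent(F^N(t)\mid\cP^\beta_{N,\rho_t})+1\bigr)\), and hence \eqref{eq:uniform-nonlinear-fluctuation-bound}.

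Two points need care in this last step.  First, \(\theta\) must be uniform in \(t\).  This holds because \(\theta\) depends only on \(d,s,m,\beta\), on \(\sup_t\norm{Du_t}_\infty\), and on \(\rho_t\) through the same uniformly controlled quantities: \(\norm{\rho_t}_\infty\), \(\norm{g*\rho_t}_\infty\), the \(L^2\) norm of \(g(x-y)\), and the partition-function bound.

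Second, for general initial data, (i) holds whenever \(\mathcal E_N(0)\lesssim 1/N\).  The statement implicitly refers to the product initial data fixed at the start of the section, and I would make that explicit.

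The main obstacle is this \(t\)-uniformity of the constants from Theorem~\ref{thm:uniform-partition} and Proposition~\ref{prop:gibbs-commutator-exponential}.  The rest is a direct transcription of Section~\ref{sec:optimal-modulated-free-energy}.
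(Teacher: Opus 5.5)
Your proposal is correct and is essentially the paper's argument: the paper's proof says only that it repeats Section~\ref{sec:optimal-modulated-free-energy} via Propositions~\ref{prop:coulomb-modulated-identity} and~\ref{prop:gibbs-commutator-exponential}, and that is the route you lay out (the smooth identity plus approximation for (ii); then coercivity, Gronwall, the entropy--Gibbs identity and \eqref{eq:average-coulomb-commutator} for (i)). Your two caveats are also accurate: the constants depend on \(\rho_t\) only through \(\norm{\rho_t}_\infty\) (and, when \(s=0\), the tail bound), and (i) tacitly uses the section's standing product initial data, so that \(\mathcal E_N(0)=0\).
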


\begin{proof}
The proof is the same as in Section~\ref{sec:optimal-modulated-free-energy},
using Propositions~\ref{prop:gibbs-commutator-exponential}
and~\ref{prop:coulomb-modulated-identity}, and is omitted.
\end{proof}

The remaining estimate concerns the linear statistics of the fluctuation
measure and will be used to control the martingale covariation.

\begin{lemma}
\label{lem:linear-fluctuation-moment}
Under the assumptions of Lemma~\ref{lem:dynamic-fluctuation-estimates}, for
every bounded measurable function \(\varphi\),
\begin{equation}\label{eq:linear-fluctuation-moment}
 \sup_{N\ge2}\sup_{0\le t\le T}
 \E\abs{\langle\eta_t^N,\varphi\rangle}
 \le C_T\norm{\varphi}_\infty.
\end{equation}
\end{lemma}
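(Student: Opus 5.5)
The plan is to bound \(\langle\eta_t^N,\varphi\rangle\) through an entropy inequality relative to the centered Gibbs law \(\cP_{N,\rho_t}^{\beta}\), and then to use Hoeffding's inequality under the product law. Three ingredients are needed.

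\emph{Uniform Gibbs entropy bound.} With product initial data, \(\mathcal E_N(0)=0\). Lemma~\ref{lem:dynamic-fluctuation-estimates}(ii) combined with (i) for \(u=\nabla\psi_t\) gives \(\mathcal E_N(t)\le C_T/N\). More precisely, we repeat the Gronwall argument of Theorem~\ref{thm:optimal-modulated-free-energy}: the commutator bound \eqref{eq:average-coulomb-commutator} together with \eqref{eq:dynamic-entropy-gibbs-identity} gives
\[
\frac1N\bigl|\E_{F^N}\cU_{N,\rho_t}(q_{\nabla\psi_t})\bigr|\le C_T\bigl(\mathcal E_N(t)+C_T/N\bigr).
\]
The identity \eqref{eq:dynamic-entropy-gibbs-identity} and Theorem~\ref{thm:uniform-partition} (uniformly in \(t\), using the uniform bounds on \(\rho_t\) and, when \(s=0\), the uniform tail assumption) then give
\[
\sup_{N,t}\Ent\bigl(F^N(t)\mid\cP_{N,\rho_t}^{\beta}\bigr)\le C_T.
\]

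\emph{Exponential moments of linear statistics under the Gibbs law.} We bound \(\E_{\cP^\beta_{N,\rho_t}}e^{\lambda|\langle\eta^N,\varphi\rangle|}\). By the Cauchy--Schwarz inequality,
\[
\E_{\cP^\beta}e^{\pm\lambda Y}\le \cZ_{N,\rho_t}(\beta)^{-1}\,\cZ_{N,\rho_t}(2\beta)^{1/2}\bigl(\E_{\rho_t^{\otimes N}}e^{\pm2\lambda Y}\bigr)^{1/2},
\]
where \(Y=\langle\eta_t^N,\varphi\rangle=N^{-1/2}\sum_i(\varphi(X_i)-\langle\rho_t,\varphi\rangle)\). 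The factor \(\cZ_{N,\rho_t}(\beta)^{-1}\) is at most \(1\) by Jensen, and \(\cZ_{N,\rho_t}(2\beta)\) is bounded by Theorem~\ref{thm:uniform-partition}. Under \(\rho_t^{\otimes N}\), \(Y\) is a normalized sum of i.i.d.\ centered variables bounded by \(2\norm{\varphi}_\infty\), so Hoeffding's lemma gives
\[
\E e^{\pm2\lambda Y}\le e^{2\lambda^2\norm{\varphi}_\infty^2}.
\]
Hence
\[
\log\E_{\cP^\beta}e^{\lambda|Y|}\le C_T+2\lambda^2\norm{\varphi}_\infty^2+\log 2.
\]

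\emph{Donsker--Varadhan transfer.} The Donsker--Varadhan inequality gives
\[
\lambda\,\E_{F^N(t)}|Y|\le \Ent(F^N(t)\mid\cP^\beta)+C_T+2\lambda^2\norm{\varphi}_\infty^2.
\]
Taking \(\lambda=\norm{\varphi}_\infty^{-1}\) (we may assume \(\varphi\ne0\)) yields \(\E|Y|\le C_T\norm{\varphi}_\infty\), uniformly in \(N\) and \(t\).

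The main obstacle is the first step. We must verify that the constants in Theorem~\ref{thm:uniform-partition} and in Proposition~\ref{prop:gibbs-commutator-exponential} are uniform over \(t\in[0,T]\). Inspecting the proofs, they depend on \(\rho_t\) only through \(\norm{\rho_t}_\infty\), the tail constants, and \(\norm{D^2\psi_t}_\infty\), all of which are assumed bounded uniformly in \(t\). We must also justify applying the entropy-solution inequality \eqref{eq:general-modulated-free-energy-inequality} with product initial data, for which \(\mathcal E_N(0)=0\).
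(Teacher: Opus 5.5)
Your proposal is correct and follows the paper's proof essentially step for step. You first get a uniform bound on \(\Ent(F^N(t)\mid\cP_{N,\rho_t}^{\beta})\) from \(\mathcal E_N(0)=0\), Lemma~\ref{lem:dynamic-fluctuation-estimates} and the entropy--Gibbs identity; you then bound the exponential moment under the Gibbs law via Cauchy--Schwarz, Theorem~\ref{thm:uniform-partition} and Hoeffding under \(\rho_t^{\otimes N}\); and you conclude with Donsker--Varadhan. The differences are cosmetic: you keep the parameter \(\lambda=\norm{\varphi}_\infty^{-1}\) instead of normalizing \(\bar\varphi=\varphi/\norm{\varphi}_\infty\), and you redo the Gronwall argument, whereas once part~(i) is available, part~(ii) gives \(N\mathcal E_N(t)\le C_T t\) directly.
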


\begin{proof}
Since \(F^N(0)=\rho_0^{\otimes N}\) and
\(\E_{\rho_0^{\otimes N}}\cU_{N,\rho_0}(g)=0\), we have
\(\mathcal E_N(0)=0\).  Using
\eqref{eq:uniform-nonlinear-fluctuation-bound} with
\(u_t=\nabla\psi_t\) and
\eqref{eq:general-modulated-free-energy-inequality}, we obtain
\[
 \sup_{N\ge2}\sup_{0\le t\le T}N\mathcal E_N(t)\le C_T.
\]
The entropy--Gibbs identity
\eqref{eq:dynamic-entropy-gibbs-identity} is unchanged when \(\mathbb R^3\) and
\(g_{3,1}\) are replaced by \(\mathbb R^d\) and \(g\), respectively.  Thus
\[
 \Ent(F^N(t)\mid\cP_{N,\rho_t}^{\beta})
 =N\mathcal E_N(t)+\log\cZ_{N,\rho_t}(\beta;g).
\]
Using Theorem~\ref{thm:uniform-partition}, we obtain
\begin{equation}\label{eq:uniform-gibbs-entropy}
 \sup_{N\ge2}\sup_{0\le t\le T}
 \Ent(F^N(t)\mid\cP_{N,\rho_t}^{\beta})\le C_T.
\end{equation}
If \(\norm{\varphi}_\infty=0\), there is nothing to prove.  Otherwise, set
\(\bar\varphi=\varphi/\norm{\varphi}_\infty\) and
\[
 \langle\eta_t^N,\bar\varphi\rangle
 =\frac1{\sqrt N}\sum_{i=1}^N
 \bigl(\bar\varphi(X_t^{N,i})-\langle\rho_t,\bar\varphi\rangle\bigr).
\]
By the definition of the centered Gibbs measure,
\[
 \E_{\cP_{N,\rho_t}^{\beta}}
 e^{|\langle\eta_t^N,\bar\varphi\rangle|}
 =\frac1{\cZ_{N,\rho_t}(\beta;g)}
  \E_{\rho_t^{\otimes N}}
  e^{-\beta\cU_{N,\rho_t}(g)
     +|\langle\eta_t^N,\bar\varphi\rangle|}.
\]
Applying the Cauchy--Schwarz inequality and using
Theorem~\ref{thm:uniform-partition}, we obtain
\[
 \E_{\cP_{N,\rho_t}^{\beta}}
 e^{|\langle\eta_t^N,\bar\varphi\rangle|}
 \le C_T\left(\E_{\rho_t^{\otimes N}}
 e^{2|\langle\eta_t^N,\bar\varphi\rangle|}\right)^{1/2}.
\]
Under \(\rho_t^{\otimes N}\),
\(\langle\eta_t^N,\bar\varphi\rangle\) is the normalized sum of centered,
bounded i.i.d.\ random variables.  Applying Hoeffding's inequality, we obtain
\[
 \E_{\rho_t^{\otimes N}}
 e^{\pm2\langle\eta_t^N,\bar\varphi\rangle}\le e^2.
\]
Using \(e^{2|z|}\le e^{2z}+e^{-2z}\), we conclude that
\[
 \sup_{N\ge2}\sup_{0\le t\le T}
 \E_{\cP_{N,\rho_t}^{\beta}}
 e^{|\langle\eta_t^N,\bar\varphi\rangle|}\le C_T.
\]
The entropy variational inequality, together with this bound and
\eqref{eq:uniform-gibbs-entropy}, gives
\[
 \E_{F^N(t)}|\langle\eta_t^N,\bar\varphi\rangle|
 \le \Ent(F^N(t)\mid\cP_{N,\rho_t}^{\beta})
  +\log\E_{\cP_{N,\rho_t}^{\beta}}
   e^{|\langle\eta_t^N,\bar\varphi\rangle|}
 \le C_T.
\]
Multiplying by \(\norm{\varphi}_\infty\), we arrive at
\eqref{eq:linear-fluctuation-moment}.
\end{proof}

We now establish quantitative Gaussian fluctuations for fixed-time
finite-dimensional projections of \(\eta_t^N\).  For a smooth function
\(\varphi\), set
\[
 (\mathcal L_t\varphi)(x)
 :=\frac1\beta\Delta\varphi(x)
   -\nabla(g*\rho_t)(x)\cdot\nabla\varphi(x)
   +\int\nabla g(x-y)\cdot\nabla\varphi(y)\rho_t(y)\dd y.
\]
With this notation, \eqref{eq:finite-fluctuation-equation} takes the concise
form
\begin{equation}\label{eq:concise-fluctuation-equation}
 \dd\langle\eta_t^N,\varphi\rangle
 =\langle\eta_t^N,\mathcal L_t\varphi\rangle\dd t
  -\frac1{\sqrt N}\cU_{N,\rho_t}(q_{\nabla\varphi})\dd t
  +\dd M_{N,t}^{\varphi}.
\end{equation}
Fix \(t\in[0,T]\), \(J\ge1\), and
\(\boldsymbol\varphi=(\varphi_1,\ldots,\varphi_J)
\in(C_c^\infty(\mathbb R^d))^J\).  Consider the vector-valued backward equation
\begin{equation}\label{eq:backward-fluctuation-equation}
 \partial_r\boldsymbol\varphi_r
 +\mathcal L_r\boldsymbol\varphi_r=0,
 \qquad \boldsymbol\varphi_t=\boldsymbol\varphi.
\end{equation}
We denote the finite-dimensional fluctuation vector by
\[
 Y_{N,t}:=\langle\eta_t^N,\boldsymbol\varphi\rangle
 =\bigl(\langle\eta_t^N,\varphi_1\rangle,\ldots,
          \langle\eta_t^N,\varphi_J\rangle\bigr).
\]

\begin{theorem}
\label{thm:quantitative-gaussian-fluctuations}
Let \(\rho_t\) and \(F^N(t)\) satisfy the assumptions of
Lemma~\ref{lem:dynamic-fluctuation-estimates}.  Suppose that
the linear backward equation \eqref{eq:backward-fluctuation-equation} admits a
unique classical solution
\((\boldsymbol\varphi_r)_{0\le r\le t}\) satisfying
\begin{equation}
 \sup_{0\le r\le t}
 \left(\norm{\boldsymbol\varphi_r}_{C_b^2}
       +\norm{\partial_r\boldsymbol\varphi_r}_\infty\right)<\infty.
 \label{eq:backward-fluctuation-regularity}
\end{equation}
Let
\[
 Y_0\sim\mathcal N\!\left(
 0,
 \left\langle\rho_0,
 \boldsymbol\varphi_0\boldsymbol\varphi_0^{\mathsf T}\right\rangle
 -\left\langle\rho_0,\boldsymbol\varphi_0\right\rangle
  \left\langle\rho_0,\boldsymbol\varphi_0\right\rangle^{\mathsf T}
 \right),
\]
and let \(W\) be a standard \(J\)-dimensional Brownian motion independent of
\(Y_0\).  Define the centered Gaussian vector
\begin{equation}\label{eq:dynamic-gaussian-representation}
 Y_t:=Y_0+\int_0^t
 \left(\frac2\beta
 \left\langle\rho_r,
 D\boldsymbol\varphi_r(D\boldsymbol\varphi_r)^{\mathsf T}
 \right\rangle\right)^{1/2}\dd W_r.
\end{equation}
There exists a constant
\(C_{t,\boldsymbol\varphi}<\infty\), independent of \(N\), such that
\begin{equation}\label{eq:gaussian-fluctuation-estimate}
 \left|\E\Phi\left(\langle\eta_t^N,\boldsymbol\varphi\rangle\right)
       -\E\Phi(Y_t)\right|
 \le\frac{C_{t,\boldsymbol\varphi}}{\sqrt N}
 \left(\norm{\nabla\Phi}_\infty
       +\norm{\nabla^2\Phi}_\infty\right),
 \qquad \Phi\in C_b^2(\mathbb R^J).
\end{equation}
Consequently, \(Y_{N,t}\) converges in law to \(Y_t\) as \(N\to\infty\).
\end{theorem}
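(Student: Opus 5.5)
The argument uses the backward equation \eqref{eq:backward-fluctuation-equation} to turn \(Y_{N,t}\) into an initial i.i.d.\ term plus a martingale plus small errors. Each piece is then compared with its Gaussian counterpart by a Lindeberg/Itô interpolation.

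\emph{Duality.} Apply It\^o's formula to \(r\mapsto\langle\eta_r^N,\boldsymbol\varphi_r\rangle\) on \([0,t]\), using \eqref{eq:concise-fluctuation-equation} componentwise with the time-dependent test functions. The term \(\partial_r\boldsymbol\varphi_r+\mathcal L_r\boldsymbol\varphi_r\) vanishes, so
\[
 Y_{N,t}=\langle\eta_0^N,\boldsymbol\varphi_0\rangle
 -\frac1{\sqrt N}\int_0^t\cU_{N,\rho_r}(q_{\nabla\boldsymbol\varphi_r})\dd r
 +\boldsymbol M_{N,t},
\]
where \(\boldsymbol M_{N,t}=\sqrt{2/(\beta N)}\sum_i\int_0^tD\boldsymbol\varphi_r(X_r^{N,i})\dd B_r^i\). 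By Lemma~\ref{lem:dynamic-fluctuation-estimates}(i) applied with \(u_r=\nabla\varphi_{j,r}\), whose Lipschitz constants are bounded by \eqref{eq:backward-fluctuation-regularity}, the remainder has \(L^1\) norm \(O(N^{-1/2})\). Since \(\Phi\) is Lipschitz, this costs \(C\norm{\nabla\Phi}_\infty N^{-1/2}\).

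\emph{Gaussian comparison of the martingale part.} Let \(\Sigma_r=\frac2\beta\langle\rho_r,D\boldsymbol\varphi_r(D\boldsymbol\varphi_r)^{\mathsf T}\rangle\). The quadratic covariation of \(\boldsymbol M_N\) is \(\int_0^t\Sigma^N_r\dd r\) with \(\Sigma^N_r=\frac2\beta\langle\mu_r^N,D\boldsymbol\varphi_r(D\boldsymbol\varphi_r)^{\mathsf T}\rangle\). Hence \(\Sigma_r^N-\Sigma_r=\frac{2}{\beta\sqrt N}\langle\eta_r^N,D\boldsymbol\varphi_r(D\boldsymbol\varphi_r)^{\mathsf T}\rangle\), and by Lemma~\ref{lem:linear-fluctuation-moment} (bounded test functions) this has \(L^1\) norm \(O(N^{-1/2})\) uniformly in \(r\).

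Following the finite-dimensional comparison of \cite[Section~4]{HaoZhangZhao2026}, I introduce
\[
 u(r,y)=\E\,\Phi\Bigl(y+\int_r^t\Sigma_{r'}^{1/2}\dd W_{r'}\Bigr),
\]
which solves \(\partial_ru+\frac12\operatorname{Tr}(\Sigma_r\nabla^2u)=0\) and satisfies \(\norm{\nabla^ku(r)}_\infty\le\norm{\nabla^k\Phi}_\infty\) for \(k=1,2\). Apply It\^o to \(u(r,\xi+\boldsymbol M_{N,r})\), with \(\xi\) independent of \(\boldsymbol M_N\) given \(\mathcal F_0\), or more simply with \(\xi=\langle\eta_0^N,\boldsymbol\varphi_0\rangle\), which is \(\mathcal F_0\)-measurable. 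The drift is \(\frac12\operatorname{Tr}((\Sigma^N_r-\Sigma_r)\nabla^2u)\), so
\[
 \bigl|\E\Phi(\xi+\boldsymbol M_{N,t})-\E u(0,\xi)\bigr|\le C\norm{\nabla^2\Phi}_\infty N^{-1/2}.
\]

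\emph{Initial field.} Write \(u(0,\xi)=\E\Phi(\xi+G)\) with \(G\sim\mathcal N(0,\int_0^t\Sigma_r\dd r)\) independent. The i.i.d.\ sum \(\xi=N^{-1/2}\sum_i(\boldsymbol\varphi_0(X_0^i)-\langle\rho_0,\boldsymbol\varphi_0\rangle)\) has bounded summands and covariance equal to that of \(Y_0\). A multivariate Berry--Esseen/Stein bound for smooth test functions therefore gives \(|\E h(\xi)-\E h(Y_0)|\le CN^{-1/2}(\norm{\nabla h}_\infty+\norm{\nabla^2h}_\infty)\) with \(h=u(0,\cdot)\). One could instead use a Lindeberg replacement with third-order Taylor expansion, but that needs \(\nabla^3\); I would use the smoothing provided by \(G\) when \(\int\Sigma\) is nondegenerate, and otherwise cite a \(C^2\) Stein bound. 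Since \(\E\Phi(Y_0+G)=\E\Phi(Y_t)\) by \eqref{eq:dynamic-gaussian-representation}, the three bounds add up to \eqref{eq:gaussian-fluctuation-estimate}.

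\emph{Main obstacle.} I expect the main obstacle to be the last step: obtaining an \(N^{-1/2}\) rate with only \(\norm{\nabla\Phi}_\infty+\norm{\nabla^2\Phi}_\infty\), possibly with a degenerate covariance. I would first try Stein's method with exchangeable pairs for bounded i.i.d.\ vectors, which gives exactly a \(C^2\) bound. A secondary point is justifying It\^o's formula for the singular drift; I would rely on the entropy-solution framework, where the nonlinear term is already identified as \(\cU_{N,\rho_r}\) and is integrable by Lemma~\ref{lem:dynamic-fluctuation-estimates}.
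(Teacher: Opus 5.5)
Your proposal is correct and follows essentially the paper's argument. The backward equation removes the linear drift, the nonlinear remainder is bounded by Lemma~\ref{lem:dynamic-fluctuation-estimates}(i), the covariation defect is handled through the same function $u$ and Lemma~\ref{lem:linear-fluctuation-moment}, and the initial i.i.d.\ field is treated by a multivariate Berry--Esseen bound. The only structural difference is that you strip off the remainder first using the Lipschitz bound on $\Phi$, whereas the paper keeps it inside It\^o's formula; the two are equivalent.

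The step you flag as the main obstacle is milder than you expect. The paper applies a $W_1$ Berry--Esseen estimate on the range of the covariance, which needs only $\norm{\nabla u_0}_\infty\le\norm{\nabla\Phi}_\infty$. So neither a $C^2$ Stein bound nor Gaussian smoothing is needed, although your $C^2$ route would also work.
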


\begin{proof}
Using \eqref{eq:concise-fluctuation-equation} and
\eqref{eq:backward-fluctuation-equation}, we obtain
\begin{equation}\label{eq:vector-fluctuation-dynamics}
\begin{aligned}
 \dd\langle\eta_r^N,\boldsymbol\varphi_r\rangle
 ={}&-\frac1{\sqrt N}
 \cU_{N,\rho_r}(q_{\nabla\boldsymbol\varphi_r})\dd r\\
 &+\sqrt{\frac2{\beta N}}\sum_{i=1}^N
 D\boldsymbol\varphi_r(X_r^{N,i})\dd B_r^i.
\end{aligned}
\end{equation}
Write
\[
 \sigma_r:=\left(\frac2\beta
 \left\langle\rho_r,
 D\boldsymbol\varphi_r(D\boldsymbol\varphi_r)^{\mathsf T}
 \right\rangle\right)^{1/2}.
\]
Then the quadratic covariation density of the martingale term in
\eqref{eq:vector-fluctuation-dynamics} is
\begin{equation}\label{eq:vector-martingale-covariation}
 \frac2\beta
 \left\langle\mu_r^N,
 D\boldsymbol\varphi_r(D\boldsymbol\varphi_r)^{\mathsf T}\right\rangle
 =\sigma_r\sigma_r^{\mathsf T}+\frac2{\beta\sqrt N}
 \left\langle\eta_r^N,
 D\boldsymbol\varphi_r(D\boldsymbol\varphi_r)^{\mathsf T}\right\rangle.
\end{equation}
For \(\Phi\in C_b^2(\mathbb R^J)\), define
\[
 u_r(x):=\E\left[
 \Phi\left(x+\int_r^t\sigma_s\dd W_s\right)\right].
\]
Then \(u\) solves
\begin{equation}\label{eq:finite-dimensional-gaussian-backward-equation}
 \partial_ru_r+\frac12\operatorname{Tr}
 \left(\sigma_r\sigma_r^{\mathsf T}\nabla^2u_r\right)=0,
 \qquad u_t=\Phi,
\end{equation}
and differentiation under the expectation gives
\begin{equation}\label{eq:finite-dimensional-gaussian-derivative-bounds}
 \sup_{0\le r\le t}\norm{\nabla u_r}_\infty
 \le\norm{\nabla\Phi}_\infty,
 \qquad
 \sup_{0\le r\le t}\norm{\nabla^2u_r}_\infty
 \le\norm{\nabla^2\Phi}_\infty.
\end{equation}
By \eqref{eq:dynamic-gaussian-representation} and the independence of
\(Y_0\) and \(W\), we find that
\[
 \E\Phi(Y_t)=\E u_0(Y_0).
\]

Applying It\^o's formula to
\(u_r(\langle\eta_r^N,\boldsymbol\varphi_r\rangle)\) and using
\eqref{eq:vector-fluctuation-dynamics},
\eqref{eq:vector-martingale-covariation}, and
\eqref{eq:finite-dimensional-gaussian-backward-equation}, we obtain
\begin{align*}
 &\dd u_r\left(
 \langle\eta_r^N,\boldsymbol\varphi_r\rangle\right)\\
 ={}&-\frac1{\sqrt N}
 \nabla u_r\left(
 \langle\eta_r^N,\boldsymbol\varphi_r\rangle\right)
 \cdot\cU_{N,\rho_r}
 (q_{\nabla\boldsymbol\varphi_r})\dd r\\
 &+\frac1{\beta\sqrt N}
 \operatorname{Tr}\left[
 \nabla^2u_r\left(
 \langle\eta_r^N,\boldsymbol\varphi_r\rangle\right)
 \left\langle\eta_r^N,
 D\boldsymbol\varphi_r(D\boldsymbol\varphi_r)^{\mathsf T}\right\rangle
 \right]\dd r\\
 &+\sqrt{\frac2{\beta N}}\sum_{i=1}^N
 \nabla u_r\left(
 \langle\eta_r^N,\boldsymbol\varphi_r\rangle\right)^{\mathsf T}
 D\boldsymbol\varphi_r(X_r^{N,i})\dd B_r^i.
\end{align*}
The equation \eqref{eq:finite-dimensional-gaussian-backward-equation} for
\(u\) cancels the term containing
\(\sigma_r\sigma_r^{\mathsf T}\).  The two remaining finite-variation terms
are the nonlinear drift in \eqref{eq:vector-fluctuation-dynamics} and the
fluctuation of the quadratic covariation in
\eqref{eq:vector-martingale-covariation}, respectively.  By
\eqref{eq:finite-dimensional-gaussian-derivative-bounds} and
\eqref{eq:backward-fluctuation-regularity}, the stochastic integral is a
martingale with zero expectation.  Since \(u_t=\Phi\) and
\(\boldsymbol\varphi_t=\boldsymbol\varphi\), integrating over \([0,t]\),
taking expectations, and using \eqref{eq:dynamic-gaussian-representation},
we obtain
\begin{align}
 &\abs{\E\Phi(Y_{N,t})-\E\Phi(Y_t)}
 \notag\\
 \le{}&
 \abs{\E u_0\left(
 \langle\eta_0^N,\boldsymbol\varphi_0\rangle\right)
 -\E u_0(Y_0)}
 \notag\\
 &+\frac1{\sqrt N}\E\int_0^t
 \abs{\nabla u_r\left(
 \langle\eta_r^N,\boldsymbol\varphi_r\rangle\right)
 \cdot\cU_{N,\rho_r}(q_{\nabla\boldsymbol\varphi_r})}\dd r
 \notag\\
 &+\frac1{\beta\sqrt N}\E\int_0^t
 \abs{\operatorname{Tr}\left[
 \nabla^2u_r\left(
 \langle\eta_r^N,\boldsymbol\varphi_r\rangle\right)
 \left\langle\eta_r^N,
 D\boldsymbol\varphi_r(D\boldsymbol\varphi_r)^{\mathsf T}\right\rangle
 \right]}\dd r.
 \label{eq:finite-dimensional-three-errors}
\end{align}

For the initial term,
\[
 \langle\eta_0^N,\boldsymbol\varphi_0\rangle
 =\frac1{\sqrt N}\sum_{i=1}^N
 \left(\boldsymbol\varphi_0(X_0^{N,i})
       -\langle\rho_0,\boldsymbol\varphi_0\rangle\right)
\]
is a normalized sum of bounded, centered i.i.d.\ vectors with the same
covariance as \(Y_0\).  Applying the multivariate \(W_1\) Berry--Esseen
estimate \cite[Theorem~2.15 and Equation~(3.5)]{Raic2019} on the range of
this covariance and using the first estimate in
\eqref{eq:finite-dimensional-gaussian-derivative-bounds}, we obtain
\begin{equation}\label{eq:initial-gaussian-error}
\begin{aligned}
 &\left|\E u_0\left(
 \langle\eta_0^N,\boldsymbol\varphi_0\rangle\right)
 -\E u_0(Y_0)\right|\\
 &\qquad\le\frac{C_{t,\boldsymbol\varphi}}{\sqrt N}
 \norm{\nabla u_0}_\infty
 \le\frac{C_{t,\boldsymbol\varphi}}{\sqrt N}
 \norm{\nabla\Phi}_\infty.
\end{aligned}
\end{equation}

Next, for the nonlinear drift term, applying
Lemma~\ref{lem:dynamic-fluctuation-estimates} and using the first estimate in
\eqref{eq:finite-dimensional-gaussian-derivative-bounds}, we obtain
\begin{equation}\label{eq:nonlinear-drift-error}
 \frac1{\sqrt N}\E\int_0^t
 \abs{\nabla u_r\left(
 \langle\eta_r^N,\boldsymbol\varphi_r\rangle\right)
 \cdot\cU_{N,\rho_r}(q_{\nabla\boldsymbol\varphi_r})}\dd r
 \le\frac{C_{t,\boldsymbol\varphi}}{\sqrt N}
 \norm{\nabla\Phi}_\infty.
\end{equation}
Finally, for the quadratic covariation term, using
Lemma~\ref{lem:linear-fluctuation-moment}, the second estimate in
\eqref{eq:finite-dimensional-gaussian-derivative-bounds}, and the trace
inequality, we obtain
\begin{equation}\label{eq:covariation-error}
\begin{aligned}
 &\frac1{\beta\sqrt N}\E\int_0^t
 \abs{\operatorname{Tr}\left[
 \nabla^2u_r\left(
 \langle\eta_r^N,\boldsymbol\varphi_r\rangle\right)
 \left\langle\eta_r^N,
 D\boldsymbol\varphi_r(D\boldsymbol\varphi_r)^{\mathsf T}\right\rangle
 \right]}\dd r\\
 &\qquad\le\frac{C_{t,\boldsymbol\varphi}}{\sqrt N}
 \norm{\nabla^2\Phi}_\infty.
\end{aligned}
\end{equation}
Substituting \eqref{eq:initial-gaussian-error},
\eqref{eq:nonlinear-drift-error}, and \eqref{eq:covariation-error} into
\eqref{eq:finite-dimensional-three-errors} proves
\eqref{eq:gaussian-fluctuation-estimate}.  For every \(\xi\in\mathbb R^J\), applying
this estimate to \(x\mapsto\cos(\xi\cdot x)\) and
\(x\mapsto\sin(\xi\cdot x)\), we obtain convergence of the characteristic
functions.  The convergence in law of \(Y_{N,t}\) to \(Y_t\) then follows by
applying L\'evy's continuity theorem.
\end{proof}

\begin{remark}
Standard estimates for linear backward parabolic equations provide the
well-posedness of \eqref{eq:backward-fluctuation-equation} and the regularity
in \eqref{eq:backward-fluctuation-regularity}; see
\cite[Lemma~3.21]{WangZhaoZhu2023} and
\cite[Lemma~4.9]{HaoZhangZhao2026}.
\end{remark}

\section{The determinant limit and sharp dependence on
\texorpdfstring{\(\beta\)}{beta}}
\label{sec:second-order-limits}

We finally return to the centered partition function.  Combining the second
Wiener chaos limit with the uniform bound, we prove convergence of the Laplace
transforms and identify the limit as a Carleman--Fredholm determinant.  We then
use the determinant formula to establish the sharp dependence on \(\beta\).

\subsection{Second Wiener chaos and the determinant limit}

We first establish the properties of \(A_{g,\rho}\) needed for its spectral
representation.

\begin{lemma}\label{lem:kernel-operator-properties}
Under Assumption~\ref{ass:kernel-density}, let \(g=g_{d,s}^{(m)}\).  Then
\(g_\rho^\circ\in L^2(\rho^{\otimes2})\), and \(A_{g,\rho}\) is a
nonnegative self-adjoint Hilbert--Schmidt operator satisfying
\(A_{g,\rho}\mathbf1=0\).
\end{lemma}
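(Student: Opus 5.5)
We prove the four assertions in order: square integrability, self-adjointness, \(A_{g,\rho}\mathbf1=0\), and nonnegativity.

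\emph{Square integrability.} Since \(K\mapsto K_\rho^\circ\) is the orthogonal projection of \(L^2(\rho^{\otimes2})\) onto kernels with vanishing \(\rho\)-marginals, we have
\[
 \norm{g_\rho^\circ}_{L^2(\rho^{\otimes2})}\le\norm{g(x-y)}_{L^2(\rho^{\otimes2})},
\]
and it suffices to bound the right-hand side. Write \(g=g_1+r_1\) using the heat decomposition \eqref{eq:heat-decomposition} at \(\tau=1\). For the remainder,
\[
 \iint r_1(x-y)^2\rho(x)\rho(y)\dd x\dd y\le\norm{\rho}_\infty\norm{r_1}_2^2<\infty
\]
by \eqref{eq:log-remainder-scales} or \eqref{eq:riesz-remainder-l2}; this is exactly where \(s<d/2\) enters. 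When \(s>0\), the coarse part satisfies \(0\le g_1\le g_1(0)<\infty\) by \eqref{eq:riesz-coarse-origin}. When \(s=0\), \(g_1\) is bounded near the origin and grows like \(\log\abs{x-y}\). Using \(\log(2+\abs{x-y})\le\log(2+\abs x)+\log(2+\abs y)\), the square is controlled by the logarithmic moments of \(\rho\), which are finite by \eqref{eq:log-tail-assumption}. Hence \(g_\rho^\circ\in L^2(\rho^{\otimes2})\), and the preliminaries then give that \(A_{g,\rho}\) is Hilbert--Schmidt with \(\norm{A_{g,\rho}}_{\mathrm{HS}}=\norm{g_\rho^\circ}_{L^2(\rho^{\otimes2})}\).

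\emph{Self-adjointness and \(A_{g,\rho}\mathbf1=0\).} Because \(g\) is even, \(g_\rho^\circ(x,y)=g_\rho^\circ(y,x)\), so the Hilbert--Schmidt operator \(A_{g,\rho}\) has a real symmetric kernel and is self-adjoint on \(L^2(\rho)\). The identity \(A_{g,\rho}\mathbf1=0\) is precisely the first cancellation in \eqref{eq:canonical-centering}.

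\emph{Nonnegativity.} This is the substantive step. We need \(\langle A_{g,\rho}f,f\rangle_{L^2(\rho)}\ge0\) for all \(f\in L^2(\rho)\). By continuity of \(A_{g,\rho}\) and density, it suffices to treat bounded, compactly supported \(f\). For such \(f\), set
\[
 \nu:=f\rho\dd x-\Bigl(\int f\rho\Bigr)\rho\dd x .
\]
Then \(\nu\) is a finite signed measure with \(\nu(\mathbb R^d)=0\), and its logarithmic moment is finite by \eqref{eq:log-tail-assumption}. Expanding the centering, using the same cancellation as in the proof of Lemma~\ref{lem:log-coarse-positive}, gives
\[
 \langle A_{g,\rho}f,f\rangle=\iint g(x-y)\,\nu(\dd x)\nu(\dd y).
\]
Split \(g=g_\tau+r_\tau\). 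The \(g_\tau\) part is nonnegative by \eqref{eq:log-coarse-heat} or \eqref{eq:riesz-coarse-positive}. The \(r_\tau\) part equals \((2\pi)^{-d}\int\widehat{r_\tau}\abs{\widehat\nu}^2\dd\xi\ge0\), since \(\widehat{r_\tau}>0\) and \(r_\tau\in L^1\) with \(\nu\) having a bounded density. Alternatively, letting \(\tau\downarrow0\) and using \(\norm{r_\tau}_1\to0\), the \(r_\tau\) part tends to zero while the heat representation of \(g_\tau\) increases, which gives
\[
 \iint g\,\dd\nu\dd\nu=\frac{1}{\Gamma((d-s)/2)}\int_0^\infty t^{(d-s)/2-1}e^{-m^2t}\norm{p_{t/2}*\nu}_2^2\dd t\ge0
\]
by monotone convergence.

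\emph{Expected obstacle.} The main difficulty is justifying this quadratic-form identity in the logarithmic case. There \(g\) is neither integrable nor bounded below, so absolute integrability of \(g(x-y)\) against \(\abs\nu\otimes\abs\nu\) must be checked from the local square integrability of \(\log\) together with the bounded density of \(\nu\) and its logarithmic moments. The zero total mass of \(\nu\) is what makes the Fourier identity in Lemma~\ref{lem:log-coarse-positive} valid.
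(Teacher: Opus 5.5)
Your proof is correct. The square-integrability, self-adjointness, and \(A_{g,\rho}\mathbf1=0\) steps match the paper up to cosmetic differences. You use that \(K\mapsto K_\rho^\circ\) is an orthogonal projection, which gives constant \(1\) instead of the paper's Jensen constant \(4\). You also split at \(\tau=1\), whereas the paper uses the pointwise bound \(0\le g\le c_{d,s}\abs{\cdot}^{-s}\).

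The nonnegativity step is where the routes differ, specifically in how the short-range remainder is handled. Both rely on the same coarse-part positivity lemma, with zero total mass doing the work in the logarithmic case.

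The paper defines \(A_\tau\) as the operator with kernel \((g_\tau)_\rho^\circ\) and gets \(A_\tau\ge0\) from that lemma. It then shows \(\norm{A_{g,\rho}-A_\tau}_{\mathrm{op}}\le\norm{(r_\tau)_\rho^\circ}_{L^2(\rho^{\otimes2})}\le4\norm{\rho}_\infty^{1/2}\norm{r_\tau}_2\to0\), so nonnegativity passes to the limit \(\tau\downarrow0\). This reuses the \(L^2\) smallness of the remainder, the same ingredient as in the partition-function proof, and gives operator-norm approximation \(A_\tau\to A_{g,\rho}\) as a by-product.

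Your main route instead works at one fixed \(\tau\). Since \(r_\tau\) is a positive superposition of Gaussians, \(\widehat{r_\tau}>0\), so the remainder quadratic form is itself nonnegative and no limit is needed. The price is the preliminary reduction to bounded \(f\). You need it so that \(\nu\) has a density in \(L^1\cap L^\infty\), which makes the Plancherel identity for \(r_\tau\) and the absolute convergence of each piece immediate. You then close with a density argument using boundedness of \(A_{g,\rho}\).

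Your alternative route, letting \(\tau\downarrow0\) with \(\norm{r_\tau}_1\to0\), is essentially the paper's argument. The difference is that you carry it out on the quadratic form against a bounded-density \(\nu\) rather than in Hilbert--Schmidt norm.
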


\begin{proof}
Recall that \(A_{g,\rho}\) is the integral operator with kernel
\(g_\rho^\circ\):
\[
 (A_{g,\rho}f)(x)
 :=\int_{\mathbb R^d}g_\rho^\circ(x,y)f(y)\rho(y)\dd y.
\]
The symmetry of \(g_\rho^\circ\) and the cancellation
\eqref{eq:canonical-centering} imply that \(A_{g,\rho}\) is self-adjoint and
\(A_{g,\rho}\mathbf1=0\).  It remains to prove that it is Hilbert--Schmidt
and nonnegative.

We first prove the square integrability of its kernel.  If \(0<s<d/2\), using
the heat representation
\eqref{eq:riesz-split} and \(0\le e^{-m^2t}\le1\), we obtain
\(0\le g=g_{d,s}^{(m)}\le g_{d,s}=c_{d,s}|\cdot|^{-s}\).  Therefore,
\[
 \iint g(x-y)^2\rho(x)\rho(y)\dd x\dd y
 \le C_{d,s}\left(
 \norm{\rho}_\infty\int_{|z|\le1}|z|^{-2s}\dd z+1\right)<\infty.
\]
If \(s=0\), using the local square integrability of \(\log|\cdot|\) and
\eqref{eq:log-tail-assumption}, we obtain the same bound.  The definition of
the centering and Jensen's inequality imply
\[
 \norm{g_\rho^\circ}_{L^2(\rho^{\otimes2})}
 \le4\norm{g(x-y)}_{L^2(\rho^{\otimes2})}<\infty.
\]
Using the Hilbert--Schmidt criterion for integral operators, we obtain
\[
 \norm{A_{g,\rho}}_{\mathrm{HS}}^2
 =\iint |g_\rho^\circ(x,y)|^2\rho(x)\rho(y)\dd x\dd y<\infty.
\]

It remains to prove nonnegativity.  Let \(g=g_\tau+r_\tau\) be the heat
decomposition \eqref{eq:heat-decomposition}, and let \(A_\tau\) be the
integral operator with kernel \((g_\tau)_\rho^\circ\).  When \(s=0\), the
signed measure
\((f-\langle\rho,f\rangle)\rho\dd x\) has zero mass and satisfies the
moment condition in Lemma~\ref{lem:log-coarse-positive} by
the Cauchy--Schwarz inequality and \eqref{eq:log-tail-assumption}.  Using that lemma when
\(s=0\) and \eqref{eq:riesz-coarse-positive} when \(s>0\), for every
\(f\in L^2(\rho)\) we obtain
\[
 \begin{aligned}
 \langle f,A_\tau f\rangle_{L^2(\rho)}
 &=\iint g_\tau(x-y)
 \bigl(f(x)-\langle\rho,f\rangle\bigr)
 \bigl(f(y)-\langle\rho,f\rangle\bigr)
 \rho(x)\rho(y)\dd x\dd y\ge0.
 \end{aligned}
\]
Thus \(A_\tau\) is nonnegative.  Since
\(A_{g,\rho}-A_\tau\) has kernel \((r_\tau)_\rho^\circ\), the previous
centering estimate yields
\[
 \begin{aligned}
 \norm{A_{g,\rho}-A_\tau}_{\mathrm{op}}
 &\le \norm{A_{g,\rho}-A_\tau}_{\mathrm{HS}}
 =\norm{(r_\tau)_\rho^\circ}_{L^2(\rho^{\otimes2})}\\
 &\le4\left(\iint r_\tau(x-y)^2\rho(x)\rho(y)\dd x\dd y\right)^{1/2}
 \le4\norm{\rho}_\infty^{1/2}\norm{r_\tau}_2.
 \end{aligned}
\]
The right-hand side tends to zero as \(\tau\downarrow0\) by
\eqref{eq:log-remainder-scales} when \(s=0\), and by
\eqref{eq:riesz-remainder-l2} when \(s>0\).  Hence
\(A_\tau\to A_{g,\rho}\) in operator norm, and for every \(f\in L^2(\rho)\),
\[
 \langle f,A_{g,\rho}f\rangle_{L^2(\rho)}
 =\lim_{\tau\downarrow0}\langle f,A_\tau f\rangle_{L^2(\rho)}\ge0.
\]
This proves the nonnegativity of \(A_{g,\rho}\).
\end{proof}

\begin{proof}[Proof of Theorem~\ref{thm:gaussian-determinant}]
Set \(K=g_\rho^\circ\).  With this notation,
\eqref{eq:modulated-energy} becomes
\[
 \cU_{N,\rho}(g;X^{1:N})
 =\frac1{N-1}\sum_{1\le i<j\le N}K(X_i,X_j)
 =\cU_{N,\rho}(K;X^{1:N}).
\]
Lemma~\ref{lem:kernel-operator-properties} shows that \(A_{g,\rho}\) is a
nonnegative self-adjoint Hilbert--Schmidt operator and is therefore compact.
Hence all its eigenvalues are nonnegative.  Applying
\cite[Theorems~VI.16 and~VI.23]{ReedSimon1980}, we list them in nonincreasing
order, repeated according to multiplicity, as
\(\kappa_1\ge\kappa_2\ge\cdots\ge0\), and
choose corresponding orthonormal eigenfunctions \((\phi_j)_{j\ge1}\) such
that
\begin{equation}\label{eq:kernel-spectral-decomposition}
 K=\sum_{j\ge1}\kappa_j\phi_j\otimes\phi_j
 \quad\text{in }L^2(\rho^{\otimes2}),
 \qquad
 \sum_{j\ge1}\kappa_j^2=\norm{K}_{L^2(\rho^{\otimes2})}^2.
\end{equation}
For every \(j\) with \(\kappa_j>0\), self-adjointness and
\(A_{g,\rho}\mathbf1=0\) imply
\[
 \kappa_j\int_{\mathbb R^d}\phi_j\rho
 =\langle A_{g,\rho}\phi_j,\mathbf1\rangle_{L^2(\rho)}
 =\langle\phi_j,A_{g,\rho}\mathbf1\rangle_{L^2(\rho)}=0.
\]
Thus every eigenfunction appearing with a nonzero coefficient has zero
\(\rho\)-mean.

Let \((Z_j)_{j\ge1}\) be independent standard Gaussian variables.  For
\(M\ge1\), set
\[
 K_M:=\sum_{j=1}^M\kappa_j\phi_j\otimes\phi_j,
 \qquad
 \cQ_M:=\frac12\sum_{j=1}^M\kappa_j(Z_j^2-1).
\]
The variables \((Z_j^2-1)_{j\ge1}\) are independent centered chi-square
variables with one degree of freedom, and
\(\E(Z_j^2-1)^2=2\).  Hence \eqref{eq:kernel-spectral-decomposition} implies
that the series
\[
 \cQ_{g,\rho}:=\frac12\sum_{j\ge1}\kappa_j(Z_j^2-1)
\]
converges in \(L^2\), and
\begin{equation}\label{eq:gaussian-chaos-tail}
 \E\abs{\cQ_{g,\rho}-\cQ_M}^2
 =\frac12\sum_{j>M}\kappa_j^2\xrightarrow[M\to\infty]{}0.
\end{equation}
Each \(\cQ_M\) is a finite centered quadratic polynomial in the Gaussian
variables.  Since the second Wiener chaos is the \(L^2\)-closure of such
polynomials, \(\cQ_{g,\rho}\) belongs to the second Wiener chaos.

A direct calculation gives
\begin{equation}\label{eq:finite-spectral-statistic}
 \cU_{N,\rho}(K_M;X^{1:N})
 =\frac{N}{2(N-1)}\sum_{j=1}^M\kappa_j\left[
 \left(\frac1{\sqrt N}\sum_{i=1}^N\phi_j(X_i)\right)^2
 -\frac1N\sum_{i=1}^N\phi_j(X_i)^2\right].
\end{equation}
The eigenfunctions are centered and orthonormal in \(L^2(\rho)\).  Therefore,
for every fixed \(M\), the multivariate central limit theorem and the law of
large numbers imply
\[
 \left(\frac1{\sqrt N}\sum_{i=1}^N\phi_j(X_i)\right)_{1\le j\le M}
 \xrightarrow[N\to\infty]{\mathrm{law}}(Z_j)_{1\le j\le M},
 \qquad
 \frac1N\sum_{i=1}^N\phi_j(X_i)^2
 \xrightarrow[N\to\infty]{\mathbb P}1
\]
for every \(1\le j\le M\).  Substituting these limits into
\eqref{eq:finite-spectral-statistic}, we obtain
\begin{equation}\label{eq:finite-spectral-limit}
 \cU_{N,\rho}(K_M;X^{1:N})
 \xrightarrow[N\to\infty]{\mathrm{law}}\cQ_M.
\end{equation}

We next let \(M\to\infty\).  Since the statistic is linear in its kernel,
\[
 \cU_{N,\rho}(K;X^{1:N})-\cU_{N,\rho}(K_M;X^{1:N})
 =\cU_{N,\rho}(K-K_M;X^{1:N}).
\]
Both \(K\) and \(K_M\) are centered in each variable.  When the square of the
right-hand side is expanded, every cross term therefore vanishes.  Indeed,
terms involving disjoint pairs vanish by independence, while terms involving
pairs with one common particle vanish after conditioning on that particle.
Consequently,
\begin{equation}\label{eq:particle-spectral-tail}
 \begin{aligned}
 &\E\abs{\cU_{N,\rho}(K;X^{1:N})
              -\cU_{N,\rho}(K_M;X^{1:N})}^2\\
 &\qquad=\frac{\binom N2}{(N-1)^2}
   \norm{K-K_M}_{L^2(\rho^{\otimes2})}^2
 =\frac{N}{2(N-1)}\sum_{j>M}\kappa_j^2
 \le\sum_{j>M}\kappa_j^2.
 \end{aligned}
\end{equation}

We now combine the fixed-\(M\) limit with the two remainder estimates.  For
every bounded Lipschitz function \(\Psi:\mathbb R\to\mathbb R\),
\[
 \begin{aligned}
 &\abs{\E\Psi\bigl(\cU_{N,\rho}(K;X^{1:N})\bigr)
       -\E\Psi(\cQ_{g,\rho})}\\
 &\quad\le \operatorname{Lip}(\Psi)
   \E\abs{\cU_{N,\rho}(K;X^{1:N})
            -\cU_{N,\rho}(K_M;X^{1:N})}\\
 &\qquad+\abs{\E\Psi\bigl(\cU_{N,\rho}(K_M;X^{1:N})\bigr)
                  -\E\Psi(\cQ_M)}
   +\operatorname{Lip}(\Psi)\E\abs{\cQ_M-\cQ_{g,\rho}}.
 \end{aligned}
\]
For fixed \(M\), the middle term tends to zero by
\eqref{eq:finite-spectral-limit}.  Applying the Cauchy--Schwarz inequality to
the other two terms and using \eqref{eq:gaussian-chaos-tail} and
\eqref{eq:particle-spectral-tail}, we obtain
\[
 \begin{aligned}
 &\limsup_{N\to\infty}
 \abs{\E\Psi\bigl(\cU_{N,\rho}(K;X^{1:N})\bigr)
       -\E\Psi(\cQ_{g,\rho})}\\
 &\qquad\le \operatorname{Lip}(\Psi)
 \left(1+\frac1{\sqrt2}\right)
 \left(\sum_{j>M}\kappa_j^2\right)^{1/2}
 \xrightarrow[M\to\infty]{}0.
 \end{aligned}
\]
Since bounded Lipschitz test functions determine weak convergence, this proves
\eqref{eq:gaussian-chaos-limit}.

We now prove the convergence of the partition functions.  Fix \(\beta>0\).
Applying Theorem~\ref{thm:uniform-partition} at \(2\beta\), we obtain
\[
 \sup_{N\ge2}\E_{\rho^{\otimes N}}
 e^{-2\beta\cU_{N,\rho}(g;X^{1:N})}
 =\sup_{N\ge2}\cZ_{N,\rho}(2\beta;g)<\infty.
\]
Thus \((e^{-\beta\cU_{N,\rho}(g;X^{1:N})})_{N\ge2}\) is uniformly
integrable.  Moreover, \eqref{eq:gaussian-chaos-limit} and the continuity of
\(x\mapsto e^{-\beta x}\) imply
\[
 e^{-\beta\cU_{N,\rho}(g;X^{1:N})}
 \xrightarrow[N\to\infty]{\mathrm{law}}e^{-\beta\cQ_{g,\rho}}.
\]
Combining this weak convergence with uniform integrability, we obtain
\begin{equation}\label{eq:partition-to-gaussian-limit}
 \lim_{N\to\infty}\cZ_{N,\rho}(\beta;g)
 =\E e^{-\beta\cQ_{g,\rho}}.
\end{equation}

It remains to compute the expectation on the right-hand side.  For a standard
Gaussian variable \(Z\) and \(\kappa\ge0\),
\[
 \E\exp\left(-\frac{\beta\kappa}{2}(Z^2-1)\right)
 =e^{\beta\kappa/2}(1+\beta\kappa)^{-1/2}.
\]
The independence of \((Z_j)_{j\ge1}\) therefore implies
\[
 \E e^{-\beta\cQ_M}
 =\prod_{j=1}^M e^{\beta\kappa_j/2}(1+\beta\kappa_j)^{-1/2}.
\]
Using \(0\le x-\log(1+x)\le x^2/2\) for \(x\ge0\), we also obtain
\[
 \sup_{M\ge1}\E e^{-2\beta\cQ_M}
 \le\exp\left(\beta^2\sum_{j\ge1}\kappa_j^2\right)<\infty.
\]
Thus \((e^{-\beta\cQ_M})_{M\ge1}\) is uniformly integrable.  Since
\(\cQ_M\to\cQ_{g,\rho}\) in \(L^2\), the exponentials converge in probability.
Uniform integrability then allows us to let \(M\to\infty\), and we obtain
\[
 \E e^{-\beta\cQ_{g,\rho}}
 =\prod_{j\ge1}e^{\beta\kappa_j/2}(1+\beta\kappa_j)^{-1/2}
 =\dettwo(\Id+\beta A_{g,\rho})^{-1/2},
\]
where the last identity is the definition of the Carleman--Fredholm determinant
\cite[Chapter~9]{Simon2005}.  The same inequality shows that
\(\sum_{j\ge1}(\beta\kappa_j-\log(1+\beta\kappa_j))\) converges absolutely.
Combining this identity with \eqref{eq:partition-to-gaussian-limit} proves
\eqref{eq:determinant-limit}, and taking logarithms proves
\eqref{eq:determinant-log}.
\end{proof}

\subsection{Sharp dependence on \texorpdfstring{\(\beta\)}{beta}}

We treat the small- and large-\(\beta\) regimes separately.  For small
\(\beta\), the expansion follows by considering each term in
\eqref{eq:determinant-log}.  Indeed, for every \(j\),
\[
 \lim_{\beta\downarrow0}
 \frac{\beta\kappa_j-\log(1+\beta\kappa_j)}{\beta^2}
 =\frac{\kappa_j^2}{2},
 \qquad
 0\le
 \frac{\beta\kappa_j-\log(1+\beta\kappa_j)}{\beta^2}
 \le\frac{\kappa_j^2}{2}.
\]
The sequence \((\kappa_j^2/2)_{j\ge1}\) is summable by
\eqref{eq:kernel-spectral-decomposition}.  Dividing
\eqref{eq:determinant-log} by \(\beta^2\) and applying the dominated
convergence theorem to the series, we obtain
\[
 \lim_{\beta\downarrow0}
 \frac{\log\cZ_{\infty,\rho}(\beta;g)}{\beta^2}
 =\frac14\sum_{j\ge1}\kappa_j^2
 =\frac14\norm{g_\rho^\circ}_{L^2(\rho^{\otimes2})}^2.
\]
Equivalently,
\begin{equation}\label{eq:sharp-small-beta}
 \log\cZ_{\infty,\rho}(\beta;g)
 =\frac{\beta^2}{4}
  \norm{g_\rho^\circ}_{L^2(\rho^{\otimes2})}^2+o(\beta^2),
 \qquad \beta\downarrow0.
\end{equation}
If \(g_\rho^\circ\ne0\), the coefficient in
\eqref{eq:sharp-small-beta} is positive.  Combining
\eqref{eq:sharp-small-beta}, \eqref{eq:determinant-limit}, and the
small-\(\beta\) bound in Theorem~\ref{thm:uniform-partition}, we obtain, for
all sufficiently small \(\beta>0\),
\[
 c_{g,\rho}\beta^2
 \le\log\cZ_{\infty,\rho}(\beta;g)
 \le\sup_{N\ge2}\log\cZ_{N,\rho}(\beta;g)
 \le C_{d,s,m,\rho}\beta^2.
\]
Thus the quadratic dependence on \(\beta\) in
Theorem~\ref{thm:uniform-partition} is sharp whenever
\(g_\rho^\circ\ne0\).

We now turn to large \(\beta\).  The upper bounds follow from
\eqref{eq:riesz-parameter-bound}.  By \eqref{eq:determinant-log}, the matching
lower bounds reduce to estimating the same eigenvalue sequence
\((\kappa_j)_{j\ge1}\) as in \eqref{eq:kernel-spectral-decomposition}.

\begin{lemma}
\label{lem:model-eigen-lower}
Let \(g=g_{d,s}^{(m)}\), where \(m=0\) when \(s=0\), and let
\((\kappa_j)_{j\ge1}\) be the nonincreasing eigenvalue sequence used in
\eqref{eq:kernel-spectral-decomposition}.
Assume that \(\rho\) is bounded below by a positive constant almost everywhere
on a ball \(D\subset\mathbb R^d\).  Then
\begin{equation*}
 \kappa_j\ge c_{d,s,m,\rho,D}j^{-(d-s)/d},
 \qquad j\ge1.
\end{equation*}
In particular, \(A_{g,\rho}\) has infinitely many positive eigenvalues.
\end{lemma}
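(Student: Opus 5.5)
The plan is to apply the Courant--Fischer max--min principle to the nonnegative compact operator \(A_{g,\rho}\) on \(L^2(\rho)\). It then suffices to exhibit, for each \(j\), a subspace \(V_j\subset L^2(\rho)\) of dimension \(j\) on which
\[
 \langle f,A_{g,\rho}f\rangle_{L^2(\rho)}\ge c\,j^{-(d-s)/d}\norm{f}_{L^2(\rho)}^2,
\]
because then \(\kappa_j\ge\min_{f\in V_j\setminus\{0\}}\langle f,A_{g,\rho}f\rangle/\norm f^2\).

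\textbf{The quadratic form for centered test functions.} For \(f\in L^2(\rho)\) set \(\nu_f:=(f-\langle\rho,f\rangle)\rho\dd x\). This is the signed measure used in the proof of Lemma~\ref{lem:kernel-operator-properties}. Passing \(\tau\downarrow0\) there, together with \eqref{eq:log-coarse-heat} or \eqref{eq:riesz-coarse-positive}, gives
\[
 \langle f,A_{g,\rho}f\rangle=(2\pi)^{-d}\int\widehat g(\xi)\abs{\widehat{\nu_f}(\xi)}^2\dd\xi,
\]
with \(\widehat g(\xi)=(m^2+\abs\xi^2)^{-(d-s)/2}\). To avoid dealing with the mean, I restrict to functions of the form \(f=h/\rho\), where \(h\in L^2(D)\) is supported in \(D\) and \(\int h=0\). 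Then \(\langle\rho,f\rangle=0\) and \(\nu_f=h\dd x\). Moreover, since \(\rho\ge c_0\) on \(D\),
\[
 \norm f_{L^2(\rho)}^2=\int_D h^2/\rho\le c_0^{-1}\norm h_2^2 .
\]
So it suffices to find a \(j\)-dimensional space \(W_j\) of mean-zero functions supported in \(D\) satisfying
\[
 \int(m^2+\abs\xi^2)^{-(d-s)/2}\abs{\widehat h(\xi)}^2\dd\xi\ge c\,j^{-(d-s)/d}\norm h_2^2,
 \qquad h\in W_j .
\]
The map \(h\mapsto h/\rho\) is injective, so the dimension is preserved.

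\textbf{Construction of \(W_j\).} Fix a smooth bump \(\psi\) supported in the unit ball with \(\int\psi=0\) and \(\psi\ne0\). Choose \(\ell\approx j^{-1/d}\) so that \(D\) contains \(j\) disjoint cubes of side \(\ell\) centered at points \(y_k\), and set \(\psi_k(x)=\psi((x-y_k)/(\ell/2))\). These functions have disjoint supports, so they are orthogonal and \(\norm h_2^2=\sum_k\abs{a_k}^2\norm{\psi_k}_2^2\) for \(h=\sum_ka_k\psi_k\). The quadratic form equals \(\iint g(x-y)h(x)h(y)\dd x\dd y\), and I would bound it below by the negative-Sobolev norm \(\norm h_{\dot H^{-(d-s)/2}}^2\), up to the \(m\)-dependent factor. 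Since \(h\) is supported in the bounded set \(D\), low frequencies are harmless, giving
\[
 \int(m^2+\abs\xi^2)^{-(d-s)/2}\abs{\widehat h}^2\ge c_{m,D}\int(1+\abs\xi^2)^{-(d-s)/2}\abs{\widehat h}^2 .
\]

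\textbf{Main obstacle: the duality step.} The key step, and the one I expect to be the main obstacle, is proving
\[
 \norm h_{H^{-\sigma}}^2\ge c\,\ell^{2\sigma}\norm h_2^2,\qquad \sigma=(d-s)/2 .
\]
This is an inverse (Bernstein-type) inequality for sums of localized bumps, and the interactions between different bumps must not destroy it. I would prove it by duality: \(\norm h_{H^{-\sigma}}\ge\langle h,u\rangle/\norm u_{H^\sigma}\). Take the test function \(u=\sum_k\bar a_k\psi_k\), which has the same disjoint-support structure. Then \(\langle h,u\rangle=\norm h_2^2\). Because the supports are disjoint and each \(\psi_k\) is smooth at scale \(\ell\),
\[
 \norm u_{H^\sigma}^2\lesssim\ell^{-2\sigma}\sum_k\abs{a_k}^2\norm{\psi_k}_2^2=\ell^{-2\sigma}\norm h_2^2 .
\]
For integer \(\sigma\) this is immediate, since derivatives preserve supports; for fractional \(\sigma\) one interpolates between integer orders, and \(\ell\le1\) ensures the inhomogeneous part is dominated. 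Combining these bounds gives \(\norm h_{H^{-\sigma}}^2\ge c\ell^{2\sigma}\norm h_2^2\approx c\,j^{-(d-s)/d}\norm h_2^2\). The max--min principle then yields the bound on \(\kappa_j\). Since the bound is strictly positive for every \(j\), \(A_{g,\rho}\) has infinitely many positive eigenvalues.

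For \(s=0\) the same argument applies with \(\sigma=d/2\) and \(m=0\). Here the low frequencies of \(\widehat g=\abs\xi^{-d}\) only help, and the mean-zero condition \(\int h=0\) makes all the integrals finite.
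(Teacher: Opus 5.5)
Your proposal is correct and is essentially the paper's proof. Both use the max--min principle applied to \(f=h/\rho\), with \(h\) in the span of disjointly supported, mean-zero rescaled bumps in \(D\), and an \(H^r\) bound from disjoint supports interpolated down to order \((d-s)/2\). Your duality step with \(u=h\) is exactly the paper's Cauchy--Schwarz inequality \(\norm{u}_2^4\le\bigl(\int\widehat g\,\abs{\widehat u}^2\bigr)\bigl(\int\widehat g^{\,-1}\abs{\widehat u}^2\bigr)\), up to constants.

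The remaining differences are cosmetic. You take exactly \(j\) cubes instead of \(L^d\) cubes plus monotonicity of \((\kappa_j)\). You obtain the Fourier form of the quadratic form through the \(\tau\downarrow0\) heat-kernel limit rather than directly. Also, the comparison \((m^2+\abs{\xi}^2)^{-(d-s)/2}\ge c_m(1+\abs{\xi}^2)^{-(d-s)/2}\) holds pointwise, so the bounded support of \(h\) plays no role there.
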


\begin{proof}
By Lemma~\ref{lem:kernel-operator-properties}, \(A_{g,\rho}\) is a
nonnegative compact self-adjoint operator, so \(\kappa_j\ge0\) for every
\(j\).  The max--min principle for compact self-adjoint operators
\cite[Theorem~4.22]{Borthwick2020} gives, for every
\(n\)-dimensional subspace \(E\subset L^2(\rho)\),
\begin{equation}\label{eq:max-min-consequence}
 \kappa_n\ge
 \inf_{0\ne f\in E}
 \frac{\langle f,A_{g,\rho}f\rangle_{L^2(\rho)}}
      {\|f\|_{L^2(\rho)}^2}.
\end{equation}
We now construct such subspaces using functions supported in \(D\).

Choose a cube \(Q=x_0+(-\ell,\ell)^d\Subset D\) and fix once and for all a
nonzero real function \(\psi\in C_c^\infty((-1/2,1/2)^d)\) such that
\(\int\psi=0\).  For an integer \(L\ge1\), divide \(Q\) into \(L^d\) equal
cubes.  Let \(x_{L,q}\) be their
centers and let \(h_L=2\ell/L\) be their common side length.  Define
\[
 \psi_{L,q}(x):=h_L^{-d/2}
 \psi\left(\frac{x-x_{L,q}}{h_L}\right),
 \qquad 1\le q\le L^d.
\]
Let
\[
 V_L:=\left\{
 u=\sum_{q=1}^{L^d}a_q\psi_{L,q}:
 a_1,\ldots,a_{L^d}\in\mathbb R
 \right\}.
\]
Thus, \(V_L\) consists of all real linear combinations of these functions.
Since their supports are disjoint,
\[
 \left\|\sum_{q=1}^{L^d}a_q\psi_{L,q}\right\|_2^2
 =\sum_{q=1}^{L^d}a_q^2\|\psi_{L,q}\|_2^2.
\]
Each \(\psi_{L,q}\) is nonzero, so the linear combination on the left can
vanish only when all the coefficients are zero.  Thus every element of
\(V_L\) has a unique representation with \(L^d\) coefficients, and hence
\(\dim V_L=L^d\).  Finally, each \(\psi_{L,q}\) has zero integral, so every
\(u\in V_L\) has zero mean.

Fix an integer \(r>(d-s)/2\).  For every multi-index \(\gamma\) with
\(|\gamma|\le r\), the functions \(\partial^\gamma\psi_{L,q}\) still have
disjoint supports and are therefore orthogonal in \(L^2\).  Thus, if
\(u=\sum_{q=1}^{L^d}a_q\psi_{L,q}\), the cross terms vanish, and a change of
variables gives
\[
 \|\partial^\gamma u\|_2^2
 =\sum_{q=1}^{L^d}a_q^2\|\partial^\gamma\psi_{L,q}\|_2^2
 =h_L^{-2|\gamma|}
 \frac{\|\partial^\gamma\psi\|_2^2}{\|\psi\|_2^2}\|u\|_2^2.
\]
Summing over \(|\gamma|\le r\) and using \(h_L=2\ell/L\), we obtain
\[
 \|u\|_{H^r}^2
 \le C_{d,r,\psi}(1+h_L^{-2r})\|u\|_2^2
 \le C_{d,r,D}(1+L)^{2r}\|u\|_2^2,
 \qquad u\in V_L.
\]
Here the fixed template \(\psi\) and the fixed value of \(\ell\) are absorbed
into \(C_{d,r,D}\), which is independent of \(L\).  With
\(\theta=(d-s)/(2r)\), interpolation between \(L^2\) and \(H^r\) then gives
\[
 \|u\|_{H^{(d-s)/2}}^2
 \le C_{d,r}\|u\|_2^{2(1-\theta)}\|u\|_{H^r}^{2\theta}
 \le C_{d,s,D}(1+L)^{d-s}\|u\|_2^2.
\]
Since
\[
 (m^2+|\xi|^2)^{(d-s)/2}
 \le C_{d,s,m}(1+|\xi|^2)^{(d-s)/2},
\]
the Fourier characterization of \(H^{(d-s)/2}\) yields
\begin{equation}\label{eq:localized-sobolev-cost}
 (2\pi)^{-d}\int_{\mathbb R^d}
 (m^2+|\xi|^2)^{(d-s)/2}|\widehat u(\xi)|^2\dd\xi
 \le C_{d,s,m,D}(1+L)^{d-s}\|u\|_2^2.
\end{equation}

For \(0\ne u\in V_L\), the Fourier representation of \(g\) gives
\[
 \iint g(x-y)u(x)u(y)\dd x\dd y
 =(2\pi)^{-d}\int_{\mathbb R^d}
 (m^2+|\xi|^2)^{-(d-s)/2}|\widehat u(\xi)|^2\dd\xi.
\]
By the Cauchy--Schwarz inequality and
\eqref{eq:localized-sobolev-cost},
\[
\begin{aligned}
 \|u\|_2^4
 &\le
 \left((2\pi)^{-d}\int_{\mathbb R^d}
 (m^2+|\xi|^2)^{-(d-s)/2}|\widehat u(\xi)|^2\dd\xi\right)\\
 &\qquad\times
 \left((2\pi)^{-d}\int_{\mathbb R^d}
 (m^2+|\xi|^2)^{(d-s)/2}|\widehat u(\xi)|^2\dd\xi\right).
\end{aligned}
\]
The first factor on the right-hand side is the interaction energy, while the
second is bounded by \eqref{eq:localized-sobolev-cost}.  Therefore,
\begin{equation}\label{eq:localized-interaction-lower}
 \iint g(x-y)u(x)u(y)\dd x\dd y
 \ge c_{d,s,m,D}(1+L)^{-(d-s)}\|u\|_2^2.
\end{equation}

We now use these functions to test \(A_{g,\rho}\).  For \(u\in V_L\), define
\(f=u/\rho\) on \(D\) and \(f=0\) outside \(D\).  By the positive lower
bound on \(\rho\) in \(D\),
\begin{equation}\label{eq:localized-f-bound}
 \|f\|_{L^2(\rho)}^2
 =\int_D\frac{|u|^2}{\rho}
 \le C_{\rho,D}\|u\|_2^2,
 \qquad
 \int_{\mathbb R^d}f\rho=0.
\end{equation}
By the definition of \(A_{g,\rho}\) and the identity \(f\rho=u\),
\[
\begin{aligned}
 \langle f,A_{g,\rho}f\rangle_{L^2(\rho)}
 &=\iint g_\rho^\circ(x,y)u(x)u(y)\dd x\dd y\\
 &=\iint g(x-y)u(x)u(y)\dd x\dd y.
\end{aligned}
\]
Here the second equality follows from \(\int u=0\) and the definition of
\(g_\rho^\circ\).  Combining \eqref{eq:localized-interaction-lower} and
\eqref{eq:localized-f-bound}, we obtain
\[
 \langle f,A_{g,\rho}f\rangle_{L^2(\rho)}
 \ge c_{d,s,m,\rho,D}(1+L)^{-(d-s)}
 \|f\|_{L^2(\rho)}^2.
\]
The map \(u\mapsto f\) is injective, so its image has dimension \(L^d\).
Applying \eqref{eq:max-min-consequence} to this \(L^d\)-dimensional subspace,
we obtain
\[
 \kappa_{L^d}\ge c_{d,s,m,\rho,D}(1+L)^{-(d-s)}.
\]
Given \(n\ge1\), take \(L=\lceil n^{1/d}\rceil\).  Then \(n\le L^d\) and
\(1+L\le3n^{1/d}\).  Since \((\kappa_j)_{j\ge1}\) is nonincreasing,
\[
 \kappa_n\ge\kappa_{L^d}
 \ge c_{d,s,m,\rho,D}(1+L)^{-(d-s)}
 \ge c_{d,s,m,\rho,D}n^{-(d-s)/d}.
\]
Thus every \(\kappa_n\) is positive, which also proves that \(A_{g,\rho}\)
has infinitely many positive eigenvalues.
\end{proof}

\begin{proof}[Proof of Theorem~\ref{thm:sharp-coulomb-growth}]
Let \(g=g_{d,s}^{(m)}\), with \(m=0\) when \(s=0\).  For all sufficiently
large \(\beta\), \eqref{eq:determinant-log} and
\eqref{eq:riesz-parameter-bound} give
\begin{equation}\label{eq:large-beta-upper}
 \log\cZ_{\infty,\rho}(\beta;g)
 \le\sup_{N\ge2}\log\cZ_{N,\rho}(\beta;g)
 \le
 \begin{cases}
  C_{d,\rho}\beta\log\beta,&s=0,\\
  C_{d,s,m,\rho}\beta^{d/(d-s)},&0<s<d/2.
 \end{cases}
\end{equation}
In the logarithmic case, we also used
\(\log(2+\beta)\le C\log\beta\).  It remains to prove the lower bounds.

Choose a sufficiently small constant \(c>0\), depending only on
\(d,s,m,\rho,D\), and set
\[
 M_\beta:=\left\lfloor c\beta^{d/(d-s)}\right\rfloor.
\]
For \(1\le j\le M_\beta\), Lemma~\ref{lem:model-eigen-lower} gives
\[
 \beta\kappa_j
 \ge c_{d,s,m,\rho,D}\beta M_\beta^{-(d-s)/d}.
\]
By the definition of \(M_\beta\), the right-hand side is at least \(1\)
if \(c\) is sufficiently small.  Therefore, for all sufficiently large
\(\beta\), we have \(M_\beta\ge1\) and
\begin{equation}\label{eq:large-beta-active-modes}
 \beta\kappa_j\ge1,
 \qquad 1\le j\le M_\beta.
\end{equation}
Since \(x-\log(1+x)\ge0\) for \(x\ge0\) and
\(x-\log(1+x)\ge x/4\) for \(x\ge1\), applying
\eqref{eq:large-beta-active-modes} to the first \(M_\beta\) terms in
\eqref{eq:determinant-log} and then using
Lemma~\ref{lem:model-eigen-lower}, we obtain
\begin{equation}\label{eq:large-beta-common-lower}
 \log\cZ_{\infty,\rho}(\beta;g)
 \ge\frac{\beta}{8}\sum_{j=1}^{M_\beta}\kappa_j
 \ge c_{d,s,m,\rho,D}\beta
       \sum_{j=1}^{M_\beta}j^{-(d-s)/d}.
\end{equation}

If \(s=0\), then \(M_\beta=\lfloor c\beta\rfloor\), and
\[
 \sum_{j=1}^{M_\beta}\frac1j
 \ge\log(M_\beta+1)
 \ge\frac12\log\beta.
\]
Thus \eqref{eq:large-beta-common-lower} gives
\[
 \log\cZ_{\infty,\rho}(\beta;g_{d,0})
 \ge c_{d,\rho,D}\beta\log\beta.
\]
If \(0<s<d/2\), then
\[
 \sum_{j=1}^{M_\beta}j^{-(d-s)/d}
 \ge c_{d,s}M_\beta^{s/d}.
\]
Therefore, by \eqref{eq:large-beta-common-lower},
\[
 \log\cZ_{\infty,\rho}(\beta;g)
 \ge c_{d,s,m,\rho,D}\beta M_\beta^{s/d}
 \ge c_{d,s,m,\rho,D}\beta^{d/(d-s)}.
\]
Combining these estimates with \eqref{eq:large-beta-upper}, we obtain
\eqref{eq:sharp-log-growth} and \eqref{eq:sharp-riesz-growth}.
\end{proof}

\bibliographystyle{abbrv}
\bibliography{WZ_Coulomb_V10}

\end{document}